\newcommand{\bbC}{\mathbb{C}}
\newcommand{\bbF}{\mathbb{F}}
\newcommand{\bbN}{\mathbb{N}}
\newcommand{\bbQ}{\mathbb{Q}}
\newcommand{\bbR}{\mathbb{R}}
\newcommand{\bbT}{\mathbb{T}}
\newcommand{\bbZ}{\mathbb{Z}}
\newcommand{\calA}{\mathcal{A}}
\newcommand{\calB}{\mathcal{B}}
\newcommand{\calL}{\mathcal{L}}
\newcommand{\calU}{\mathcal{U}}
\newcommand{\re}[1]{\operatorname{Re} #1}
\newcommand{\im}[1]{\operatorname{Im} #1}
\newcommand{\id}{\operatorname{id}}
\theoremstyle{definition}
\newtheorem{definition}{Definition}[section]
\newtheorem{remark}[definition]{Remark}
\newtheorem{remarks}[definition]{Remarks}
\newtheorem{example}[definition]{Example}
\theoremstyle{plain}
\newtheorem{proposition}[definition]{Proposition}	
\newtheorem{lemma}[definition]{Lemma}
\newtheorem{theorem}[definition]{Theorem}	
\newtheorem{corollary}[definition]{Corollary}
\begin{document}

\title[Spectral Properties of Contractive Semigroups]{Spectral and Asymptotic Properties of Contractive Semigroups on Non-Hilbert Spaces}

\author{Jochen Gl\"uck}
\email{jochen.glueck@uni-ulm.de}
\address{Jochen Gl\"uck, Institute of Applied Analysis, Ulm University, 89069 Ulm, Germany}

\keywords{asymtotics of contractive semigroups, geometry of unit balls, triviality of the peripheral spectrum}
\subjclass[2010]{47D06, 47A10}

\date{\today}

\begin{abstract}
	We analyse $C_0$-semigroups of contractive operators on real-valued $L^p$-spaces for $p \not= 2$ and on other classes of non-Hilbert spaces. We show that, under some regularity assumptions on the semigroup, the geometry of the unit ball of those spaces forces the semigroup's generator to have only trivial (point) spectrum on the imaginary axis. This has interesting consequences for the asymptotic behaviour as $t \to \infty$. For example, we can show that a contractive and eventually norm continuous $C_0$-semigroup on a real-valued $L^p$-space automatically converges strongly if $p \not\in \{1,2,\infty\}$. 
\end{abstract}

\maketitle

\section{Introduction and Preliminaries}

If we want to analyse the asymptotic behaviour of a bounded $C_0$-semigroup $(e^{tA})_{t \ge 0}$ on a Banach space $X$, then an important question to ask is whether the spectrum of $A$ on the imaginary axis is trivial, i.e.\ contained in the set $\{0\}$.  If this is the case, then we can often conclude that $e^{tA}$ converges strongly as $t \to \infty$ (see e.g.\ \cite[Corollary 2.6]{Arendt1988}). Therefore, one is interested in simple criteria to ensure that $\sigma(A) \cap i\bbR \subset \{0\}$. A typical example for such a criterion is positivity of the semigroup with respect to a Banach lattice cone (in combination with an additional regularity assumption on the semigroup, see \cite[Corollary C-III.2.13]{Arendt1986}). \par 
In this article, we consider another geometric condition on the semigroup, namely contractivity of each operator $e^{tA}$. We will see that on many important function spaces, this condition ensures that $\sigma(A) \cap i \bbR \subset \{0\}$. To give the reader a more concrete idea of what we are going to do, let us state the following two results which will be proved (in a more general form) in the subsequent sections:

\begin{theorem}
	Let $(\Omega,\Sigma,\mu)$ be a measure space and let $1 < p < \infty$, $p \not=2$. Let $(e^{tA})_{t \ge 0}$ be an eventually norm continuous, contractive $C_0$-semigroup on the real-valued function space $L^p(\Omega,\Sigma,\mu; \bbR)$. Then $\sigma(A) \cap i \bbR \subset \{0\}$. In particular, $e^{tA}$ converges strongly as $t \to \infty$.
\end{theorem}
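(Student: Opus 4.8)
The plan is to separate the statement into the spectral claim $\sigma(A)\cap i\bbR\subseteq\{0\}$ and the asymptotic claim, and to obtain the latter from the former essentially for free: once we know that $0$ is the only possible spectral value of $A$ on the imaginary axis, the semigroup is bounded (indeed contractive), eventually norm continuous, and has at most a single, hence countable, peripheral spectral value, so the cited stability result \cite[Corollary 2.6]{Arendt1988} yields strong convergence as $t\to\infty$. Here one also uses that $L^p$ is reflexive for $1<p<\infty$, which rules out residual spectrum at $0$ and produces the mean ergodic projection onto $\ker A$ as the limit operator. Thus everything reduces to showing that no nonzero point of $i\bbR$ belongs to $\sigma(A)$.

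Assume, for contradiction, that $i\beta\in\sigma(A)$ with $\beta\neq0$. The first tool is eventual norm continuity, which gives the spectral mapping theorem $\sigma(e^{tA})\setminus\{0\}=\exp(t\,\sigma(A))$ for the complexified semigroup, so that $e^{it\beta}\in\sigma(e^{tA})$ for every $t\ge0$. Since each $e^{tA}$ is a contraction we have $\sigma(e^{tA})\subseteq\{z\in\bbC:|z|\le1\}$, and hence the unimodular number $e^{it\beta}$ lies on the boundary of the spectrum of $e^{tA}$; being a boundary spectral value, it is an approximate eigenvalue. Fixing a $t$ with $t\beta\notin2\pi\bbZ$ and passing to real and imaginary parts of approximate eigenvectors, I would extract unit vectors $f_n,g_n\in L^p(\Omega;\bbR)$ on which $e^{tA}$ acts, up to an error tending to zero, as the rotation by the nonzero angle $t\beta$ in the real plane they span.

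The geometric heart of the argument is to show that such a nontrivial (near-)rotation is incompatible with global contractivity on $L^p$ precisely when $p\neq2$. Two facts about the geometry of the $L^p$ unit ball should be combined here. First, the reversible, rotating part of the semigroup is the range of the contractive mean ergodic projection, and a classical structure theorem identifies the range of a norm-one projection on $L^p$, $p\neq2$, isometrically with an $L^p$-space; a Euclidean plane, which is what a genuine isometric rotation produces, is \emph{not} of this form when $p\neq2$, and this is exactly where the Hilbert case must be excluded. Second, on such an $L^p$-subspace the rotations are surjective isometries, hence weighted composition operators by Lamperti's theorem, and eventual norm continuity of the ambient semigroup forces the resulting isometry group to be norm continuous, which for $p\neq2$ compels it to be trivial. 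I would package these observations into a single quantitative convexity lemma: for a contraction on $L^p$ with $p\neq2$ there is $\delta>0$ such that the near-rotation relations above with a fixed nonzero angle force $\|e^{tA}f_n\|<1-\delta$ for large $n$, contradicting $\|f_n\|=1$ and contractivity.

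The step I expect to be the main obstacle is precisely this geometric lemma, together with the reduction that legitimises it. The difficulty is twofold. One must isolate the correct geometric property of the unit ball: strict convexity and smoothness of $L^p$ do \emph{not} suffice, since Euclidean planes embed isometrically into $L^p$ via Gaussian variables, so the argument has to genuinely exploit that the rotating subspace is \emph{contractively complemented}, not merely isometrically embedded. Moreover, one must cope with the possibility that the peripheral spectrum of $A$ is continuous rather than point spectrum, so that only approximate eigenvectors are available and every estimate must be quantitative and stable under the vanishing errors. Once the lemma is established, the contradiction forces $\beta=0$, giving $\sigma(A)\cap i\bbR\subseteq\{0\}$ and, together with the first paragraph, the asserted strong convergence.
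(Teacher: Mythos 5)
Your overall strategy is the right one, and you have correctly identified several of the essential ingredients: the reduction of the convergence statement to the spectral statement via \cite[Corollary 2.6]{Arendt1988}, the use of eventual norm continuity to push $i\beta$ into $\sigma(e^{tA})\cap\bbT$, the fact that boundary spectrum is approximate point spectrum, and --- crucially --- the insight that mere isometric embedding of a Euclidean plane into $L^p$ is no obstruction (Gaussian variables) and that one must instead exploit \emph{contractive complementation} together with Tzafriri's theorem that ranges of norm-one projections in $L^p$ are again $L^p$-spaces. All of this is also how the paper proceeds.

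However, there is a genuine gap exactly where you say you expect one, and your proposed way around it does not work as stated. If $i\beta$ lies only in the approximate point spectrum of $A$ and is not an eigenvalue, then there is no actual rotating subspace inside $X$: the fixed space of the relevant operator (e.g.\ of $-e^{\pi A/\beta}$ after rescaling) may be trivial, so the mean ergodic projection is zero and carries no geometric information. Your plan therefore hinges entirely on the unproved ``quantitative convexity lemma'' for near-rotations built from approximate eigenvectors, and making Tzafriri's theorem and the rotation-forces-a-Euclidean-plane argument quantitative and stable under vanishing errors is precisely the hard analytic content that is missing. The paper circumvents this by a structural device you do not mention: it passes to an ultrapower $X_\calU$. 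On the ultrapower, approximate eigenvalues of $e^{tA}$ become genuine eigenvalues of $(e^{tA})_\calU$, and by Heinrich's theorem $X_\calU$ is again an $L^p$-space, hence still reflexive and still projectively non-Hilbert. One then obtains a genuine periodic rotation group on a genuinely contractively complemented subspace $Z$ of $X_\calU$ (the mean ergodic projection of $-(e^{\pi A_\bbC})_\calU$ is contractive because \emph{uniform} contractivity, unlike strong or weak contractivity, survives the lifting), and Lyubich's argument --- putting a complex structure on $Z$ and using Hahn--Banach to produce a contractive projection onto a one-complex-dimensional, i.e.\ two-real-dimensional Euclidean, subspace --- yields the contradiction with Tzafriri's theorem. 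Some care is also needed, and is taken in the paper, to show that the lifted family is strongly continuous at least on the relevant eigenspaces (this uses norm continuity at infinity); your sketch does not address this. In short: the skeleton is right, but the step you flag as ``the main obstacle'' is the theorem, and the ultrapower passage is the missing idea that makes it tractable.
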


In this theorem, $\sigma(A)$ denotes the spectrum of the complex extension of $A$ to a complexification of the real space $L^p(\Omega,\Sigma,\mu; \bbR)$, see the end of the introduction for details. The above theorem (in fact, a generalization of it) is proved in Subsection \ref{subsection_contraction_semigroups_on_capital_l_p_spaces} below (Corollaries \ref{cor_trivial_peripheral_spectrum_on_capital_l_p_spaces} and \ref{cor_convergence_of_semigroups_on_l_p_spaces}). 

In the following theorem, $c(\bbN;\bbR)$ denotes the space of all real-valued, convergent sequences (indexed by $\bbN)$ and $c_0(\bbN;\bbR)$ denotes its subspace of sequences which converge to $0$.

\begin{theorem}
	Let $X$ be one of the real-valued sequence spaces $c(\bbN;\bbR)$, $c_0(\bbN;\bbR)$ or $l^p(\bbN;\bbR)$ for $1 \le p < \infty$, $p \not= 2$. If $(e^{tA})_{t\ge 0}$ is a contractive $C_0$-semigroup on $X$, then $\sigma_{\operatorname{pnt}}(A) \cap i \bbR \subset \{0\}$. If, in addition, $A$ has compact resolvent, then $e^{tA}$ converges strongly as $t \to \infty$.
\end{theorem}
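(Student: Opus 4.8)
The plan is to treat the two assertions separately, dealing with the (routine) convergence statement first and then concentrating on the triviality of the peripheral point spectrum, which is the substantial part. For the convergence, assume the spectral claim $\sigma_{\operatorname{pnt}}(A)\cap i\bbR\subseteq\{0\}$ is already established. If $A$ has compact resolvent, then $\sigma(A)$ consists of isolated eigenvalues of finite multiplicity, so $\sigma(A)\cap i\bbR=\sigma_{\operatorname{pnt}}(A)\cap i\bbR\subseteq\{0\}$; since the semigroup is bounded, a pole of the resolvent on the imaginary axis must be of first order, and the hypotheses of the strong convergence result quoted in the introduction (\cite[Corollary 2.6]{Arendt1988}) are met. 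Hence $e^{tA}$ converges strongly, and everything reduces to the spectral statement.

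For the spectral statement I argue by contradiction: suppose $i\beta\in\sigma_{\operatorname{pnt}}(A)$ with $\beta\neq0$ and let $x=u+iv$ (with $u,v$ real) be an eigenvector of the complexified generator, so that $Au=-\beta v$ and $Av=\beta u$. Then $V:=\operatorname{span}_\bbR\{u,v\}$ is two-dimensional (otherwise $\beta=0$), it is invariant under every $e^{tA}$, and in the basis $(u,v)$ the operator $e^{tA}|_V$ is the rotation by the angle $\beta t$. As $t$ runs through $[0,\infty)$ these rotations cover every angle; since each is contractive and the rotations by $\theta$ and by $-\theta$ are mutually inverse, every $e^{tA}|_V$ is in fact an \emph{isometry} of $V$. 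Consequently the norm of $X$ restricted to $V$ is invariant under all rotations, i.e. it is a multiple of a Euclidean norm, and the eigenvector lies in the reversible part of the semigroup. The task is thus to rule out, in each of the spaces under consideration, a two-dimensional subspace that is isometrically Euclidean and on which the semigroup rotates.

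In the reflexive range $1<p<\infty$, $p\neq2$, I would run this through the Jacobs--de Leeuw--Glicksberg (JdLG) decomposition rather than through plane geometry. Since the semigroup is bounded and $X=l^p(\bbN;\bbR)$ is reflexive, all orbits are relatively weakly compact, the reversible part $X_{\mathrm r}$ is the range of a \emph{contractive} projection (the projection furnished by the decomposition is a weak-operator limit of the contractions $e^{tA}$), and $(e^{tA}|_{X_{\mathrm r}})_{t\ge0}$ extends to a $C_0$-group of surjective isometries of $X_{\mathrm r}$. By Ando's theorem on contractive projections in $L^p$, the range $X_{\mathrm r}$ is isometric to an $l^p$-space; by the Banach--Lamperti theorem the surjective isometries of an $l^p$-space with $p\neq2$ are exactly the signed permutations $(\xi_n)\mapsto(\varepsilon_n\xi_{\pi(n)})$. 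A $C_0$-group of signed permutations with value $\operatorname{id}$ at $0$ must be trivial: strong continuity forces, for each coordinate and all small $t$, the permutation to fix that coordinate and the sign to be $+1$, and the group law then gives $e^{tA}|_{X_{\mathrm r}}=\operatorname{id}$ for all $t$. Hence $X_{\mathrm r}\subseteq\ker A$; but our eigenvector lies in $X_{\mathrm r}$ and has eigenvalue $i\beta\neq0$, a contradiction.

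For the non-reflexive spaces $l^1(\bbN;\bbR)$, $c_0(\bbN;\bbR)$ and $c(\bbN;\bbR)$ the JdLG machinery is not available, and I would instead contradict the Euclidean-plane conclusion of the second paragraph directly. For $c_0$ and $c$: writing $w(t)=e^{tA}u$, isometry on $V$ gives $\|w(t)\|_\infty\equiv\|u\|_\infty=:\gamma$, while each coordinate is $w_n(t)=a_n\cos(\beta t-\phi_n)$ with $a_n\le\gamma$ and $a_n\to0$; for every $t$ the supremum $\gamma$ must be attained, which forces $\beta t\in\phi_n+\pi\bbZ$ for some $n$ with $a_n=\gamma$, so the whole line $\{\beta t:t\in\bbR\}$ lies in a finite union of arithmetic progressions, an uncountable set inside a countable one, which is absurd (for $c$ one checks separately that the limit coordinate alone cannot sustain a constant supremum). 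For $l^1$ the restriction of $\|\cdot\|_1$ to $V$ has the form $\sum_n r_n|\langle(a,b),\omega_n\rangle|$ with $\sum_n r_n<\infty$, so the associated zonoid has a purely \emph{atomic} even representing measure on the circle, whereas a Euclidean norm corresponds to the \emph{uniform} (continuous) measure; uniqueness of the representing measure (injectivity of the cosine transform) makes this impossible. The main obstacle throughout is precisely the even-integer case $p\in\{4,6,\dots\}$: there genuine isometric Euclidean planes \emph{do} exist in $l^p$ (via cubature/spherical-design vectors), so no argument resting on convexity of the unit ball alone can succeed, and one is forced to exploit the full contractivity of the semigroup through the isometry-rigidity route (JdLG together with Ando and Banach--Lamperti); verifying that the JdLG projection is contractive and that the reversible part is a genuine $l^p$-space on which the group acts by surjective isometries is the step that carries the real weight.
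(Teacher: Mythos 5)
Your reduction of a purely imaginary eigenvalue $i\beta$, $\beta\neq 0$, to an isometric Euclidean plane on which the semigroup acts by rotations is exactly the paper's first step (the proof of Theorem \ref{theorem_point_spectrum_for_sg_on_extremely_non_hilbert_spaces}), and your treatment of $c$, $c_0$ and $l^1$ amounts to a direct verification that these spaces are what the paper calls extremely non-Hilbert; the paper reaches the same conclusions via a Baire category argument for $C(K;\bbR)$ with $K$ countable (Example \ref{examp_c_k_is_extremely_non_hilbert_for_countable_k}) and by citing Delbaen--Jarchow--Pe{\l}czy{\'n}ski for $l^p$ with $p\notin 2\bbN$. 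Where you genuinely diverge is the range $1<p<\infty$: the paper again uses the extremely non-Hilbert property for $p\notin 2\bbN$ and handles the even integers $p\in\{4,6,\dots\}$ by passing to the adjoint semigroup on the extremely non-Hilbert dual $l^{p'}$ (Corollary \ref{cor_point_spectrum_for_sg_on_spaces_with_extremely_non_hilbert_dual_spaces}, via \cite[Lemma~2.3]{Arendt1988}), whereas you treat all $p$ at once through the Jacobs--de~Leeuw--Glicksberg reversible part, the structure of contractively complemented subspaces of $l^p$, and Banach--Lamperti rigidity. This buys a unified argument with no duality detour, and it is close in spirit to the machinery the paper only deploys later for projectively non-Hilbert spaces (Section \ref{section_peripheral_point_spectrum_on_projectively_non_hilbert_spaces}, where a contractive mean ergodic projection plays the role of your JdLG projection and \cite[Theorem~6]{Tzafriri1969} plays the role of Ando's theorem). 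Your convergence argument (first-order pole at $0$ plus an ABLV-type theorem) also differs from the paper's appeal to \cite[Theorem V.2.14 and Corollary V.2.15]{Engel2000}, but both are standard and correct.

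One step of yours carries more weight than your phrasing suggests: you need the reversible part $X_{\mathrm r}$ to be isometric to an \emph{atomic} space $l^p(\Gamma)$, not merely to some abstract $L^p(\nu)$. On a non-atomic $L^p$ the surjective isometries are weighted composition operators, and a nontrivial isometric rotation group with nonzero purely imaginary point spectrum really exists there --- this is precisely the paper's Example \ref{example_cpaital_l_p_not_extremly_non_hilbert_shift_sg} --- so with only ``the range is an $L^p$-space'' the Banach--Lamperti step would prove nothing. The general Ando/Tzafriri theorem yields only ``isometric to some $L^p(\nu)$''; what you must invoke is the finer fact that a $1$-complemented subspace of $l^p$, $p\neq 2$, is the closed span of disjointly supported vectors and hence isometric to $l^p(\Gamma)$. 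That statement is true and available in the literature, so your proof goes through once the citation is sharpened, but as written this is the one point at which the argument could collapse, and it is notable that it is exactly the even-integer case (where Euclidean planes do sit isometrically inside $l^p$) that makes the atomicity indispensable.
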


Here, $\sigma_{\operatorname{pnt}}(A)$ denotes the point spectrum of a complex extension of $A$ to a complexification of the real space $X$ and we say that $A$ has compact resolvent if the complex extension of $A$ to any complexification of $X$ has compact resolvent; note that this property does not depend on the choice of the complexification (see again the end of the introduction for details). The above theorem follows from results that we are going to prove in Subsection \ref{subsection_c_0_semigroups_on_extremely_non_hilbert_spaces} (see Corollary \ref{cor_peripheral_point_spectrum_and_convergence_on_l_p_sequence_spaces} for the case $X = l^p$; for the cases $X = c(\bbN;\bbR)$ and $X = c_0(\bbN;\bbR)$, see Theorem \ref{theorem_point_spectrum_for_sg_on_extremely_non_hilbert_spaces}, Corollary \ref{cor_convergence_for_compact_resolvent_on_extremely_non_hilbert_space} and the assertions before and after Example \ref{examp_c_k_is_extremely_non_hilbert_for_countable_k}). \par 

Both of the above theorems make use of the fact that the underlying Banach space behaves in some manner oppositely to a Hilbert space (which explains the condition $p \not= 2$ in the assumptions). The idea to employ this observation for the spectral analysis of contractions is not new. A similar approach was used by Krasnosel'ski{\u\i} in \cite{Krasnoselskii1968} and by Lyubich in \cite{Lyubich1970} to analyse the peripheral spectrum of compact and of finite dimensional contractions. Furthermore, in \cite{Goldstein1973}, Goldstein used similar ideas to analyse $C_0$-groups of isometries on Orlicz spaces (see also \cite{Fleming1976} where a slight inaccuracy in Goldstein's paper was corrected). Moreover, we note that in \cite{Lyubich1997} Lyubich presented a unified approach to the spectral analysis of operators with invariant convex sets in finite dimensions; in particular, this approach allows the analysis of contractions and the analysis of positive operators within the same framework. \smallskip \par

Let us give a short outline of the current article: We mainly focus on contractive $C_0$-semi\-groups $(e^{tA})_{t \ge 0}$ (and a slight generalisation of them, see Definition \ref{def_asymptotic_contractivity}), but we also give several results for single operators which are derived from the semigroup results. In Section \ref{section_peripheral_point_spectrum_on_extremely_non_hilbert_spaces} we consider Banach spaces that do not isometrically contain a two-dimensional Hilbert space. This is a very strong condition, which enables us to prove strong results on the point spectrum of semigroups and operators. Some classical sequence spaces will turn out to be typical examples of such Banach spaces. In Section \ref{section_peripheral_point_spectrum_on_projectively_non_hilbert_spaces} we weaken the geometric condition on the Banach space to allow for a wider range of spaces. Instead we will impose additional assumptions on our semigroups to prove similar results on the point spectrum. In Section \ref{section_peripheral_spectrum_in_capitel_l_p} we employ an ultra power technique to deduce results on the spectrum rather than on the point spectrum. To make this approach work we need geometrical assumptions not only on the Banach space $X$, but also on an ultra power $X_\calU$ of $X$. It turns out that those assumptions are fulfilled for $L^p$-spaces if $p \not\in \{1,2,\infty\}$. \smallskip \par

We point out that the main reason why our results are interesting is that we do not impose any positivity assumptions on our semigroups. In the case of positive semigroups most of our results can be derived from the well-known spectral theory of positive semigroups (see \cite[Sections~C-III and~C-IV]{Arendt1986} for an overview; see also the more recent articles \cite{Davies2005} for a treatment of positive contractive semigroups on $l^p$, \cite{Keicher2006, Wolff2008} for positive semigroups on more general atomic spaces and \cite{Arendt2008, Gerlach2013} for the non-atomic case). \smallskip \par 

Before starting our analysis, let us fix some notation that will be used throughout the article: By $\bbN := \{1,2,3,...\}$ we denote the set of strictly positive integers and we set $\bbN_0 := \bbN \cup \{0\}$. Whenever $X$ is a vector space over a field $\bbF$ and $M \subset X$, then we denote by $\operatorname{span}_\bbF(M)$ the \emph{linear span} of $M$ in $X$ over $\bbF$. Whenever $X$ is a real or complex Banach space, the set of bounded linear operators on $X$ will be denoted by $\calL(X)$. For a complex Banach space $X$ and a closed linear operator $A: X \supset D(A) \to X$ its \emph{spectrum}, \emph{point spectrum} and \emph{approximate point spectrum} are denoted respectively by $\sigma(A)$, $\sigma_{\operatorname{pnt}}(A)$ and $\sigma_{\operatorname{appr}}(A)$; by $s(A) := \sup \{\re \lambda: \lambda \in \sigma(A)\}$ we denote the \emph{spectral bound} of $A$. If $A \in \calL(X)$, then $r(A)$ denotes the \emph{spectral radius} of $A$. Whenever $\lambda \not\in \sigma(A)$, then $R(\lambda, A) := (\lambda - A)^{-1}$ is the \emph{resolvent} of $A$. For each $\lambda \in \bbC$, we denote by $\operatorname{Eig}(\lambda, A) := \ker(\lambda - A)$ the \emph{eigenspace} of $A$ corresponding $\lambda$; this notation will be used even if $\lambda \not\in \sigma_{\operatorname{pnt}}(A)$, and in this case we have $\operatorname{Eig}(\lambda,A) = \{0\}$, of course. If $X$ is a real or complex Banach space, then we denote by $X'$ its \emph{norm dual space}, and if $A: X \supset D(A) \to X$ is a densely defined linear operator, then we denote by $A'$ the \emph{adjoint operator} of $A$. We will always use the symbol $\bbT$ to denote the unit circle $\bbT := \{\lambda \in \bbC: |\lambda| = 1\}$. All measure spaces considered in this paper are allowed to be non-$\sigma$-finite, unless something else is stated explicitly.

Let us briefly discuss the concept of \emph{complexifications} of real Banach spaces which we will need throughout the article. Complexifications of real Banach spaces are discussed in some detail in \cite{Munoz1999}; there the authors consider the notion of a \emph{reasonable complexification} which is a bit more restrictive than our definition of a complexification below (but which would also be fine for our purposes in this paper). \par 
Let $X$ be a real Banach space. By a \emph{complexification} of $X$ we mean a tuple $(X_\bbC,J)$, where $X_\bbC$ is a complex Banach space and $J: X \to X_\bbC$ is an isometric $\bbR$-linear mapping with the following properties:
\begin{itemize}
	\item[(a)] $X_\bbC = J(X) \oplus i \, J(X)$, meaning that $J(x) \cap i\,J(X) = \{0\}$ and $X_\bbC = J(X) + i \, J(X)$. 
	\item[(b)] The ($\bbR$-linear) projection from $X_\bbC$ onto $J(X)$ along $i\,J(X)$ is contractive.
\end{itemize}	

It is often convenient to identify $X$ with its image $J(X)$ and to shortly say that $X_\bbC \supset X$ is a complexification of $X$, thereby suppressing the mapping $J$ in the notation. If $X_\bbC \supset X$ is a complexification of a real Banach space $X$, then we can decompose each element $z \in X_\bbC$ into a sum $z = \re z + i\im z$, where $\re z$ and $\im z$ are uniquely determined elements of $X$; as usual we denote by $\overline{z} := x - iy$ the \emph{complex conjugate vector} of $z$. It follows from property (b) above that the mappings $\re$ and $\im$ are contractive. \par 
Note that if $X_\bbC$ is a complexification of $X$, then every linear operator $A: X \supset D(A) \to X$ has a unique $\bbC$-linear extension $A_\bbC: X_\bbC \supset D(A_\bbC) := D(A) + iD(A) \to X$; we call $A_\bbC$ the \emph{complex extension} of $A$ to the complexification $X_\bbC$ of $X$. Similarly, every bounded linear functional $x' \in X'$ has a unique $\bbC$-linear extension $(x')_\bbC: X_\bbC \to \bbC$; the functional $(x')_\bbC$ is also bounded and therefore an element of the dual space $(X_\bbC)'$ of $X_\bbC$. Let $H: X' \to (X_\bbC)'$, $H(x') = (x')_\bbC$; using that $\re$ and $\im$ are contractive one can show that $H$ is isometric and this implies that $((X_\bbC)',H)$ is a complexification of $X'$, i.e.\ we can consider the dual space of a complexification of $X$ as a complexification of the dual space of $X$. It is then easy to verify that the adjoint $(A_\bbC)'$ of the complex extension of a densely defined linear operator $A: X \supset D(A) \to X$ coincides with the complex extension of the adjoint $A'$. \par 
We point out that \emph{every} real Banach space $X$ has a complexification $X_\bbC$. For example, we can simply endow an algebraic complexification $X_\bbC$ of $X$ with the norm $||x+iy|| := \sup_{\theta \in [0,2\pi]} ||x \cos \theta + y \sin \theta ||$. This particular complexification has the advantage that we have $\|T\| = \|T_\bbC\|$ for every operator $T \in \calL(X)$, a property which need not be true on general complexifications. However, the reader should be warned that on many concrete function spaces the norm defined above does not coincide with the ``natural norm'' one would usually endow the corresponding complex function space with. \par 
In general there are, of course, more than one complexifications of a real Banach space $X$; two complexifications of $X$ need not be isometric, but they are always topologically $\bbC$-linearly isomorphic via a (uniquely determined) canonical mapping which acts as the identity on $X$. Using this observation it is easy to see that many properties of complex extensions of operators do not depend on the choice of the complexification of the Banach space. In particular, if $A_{\bbC,1}$ and $A_{\bbC,2}$ denote complex extensions of an operator $A: X \supset D(A) \to X$ to two complexifications $X_{\bbC,1}$ and $X_{\bbC,2}$, respectively, then we have $\sigma(A_{\bbC,1}) = \sigma(A_{\bbC,2})$ and $\sigma_{\operatorname{pnt}}(A_{\bbC,1}) = \sigma_{\operatorname{pnt}}(A_{\bbC,2})$. Therefore it makes sense to define the \emph{spectrum} $\sigma(A)$ (respectively, the \emph{point spectrum} $\sigma_{\operatorname{pnt}}(A)$) of $A$ to be the spectrum (respectively, the point spectrum) of the complex extension $A_\bbC$ of $A$ to any complexification $X_\bbC$ of $X$.

\section{The point spectrum on extremely non-Hilbert spaces} \label{section_peripheral_point_spectrum_on_extremely_non_hilbert_spaces}

\subsection{Extremely non-Hilbert spaces}

When we consider contractive $C_0$-semi\-groups $(e^{tA})_{t \ge 0}$ on a real Banach space $X$, it turns out that the existence of purely imaginary eigenvalues of (the complex extension of) $A$ is related to the existence of two-dimensional Hilbert spaces in $X$, see Theorem \ref{theorem_point_spectrum_for_sg_on_extremely_non_hilbert_spaces} below. This motivates the following notion.

\begin{definition}
	A real Banach space $X$ is called \emph{extremely non-Hilbert} if it does not isometrically contain a two-dimensional Hilbert space.
\end{definition}

Clearly, any closed subspace of an extremely non-Hilbert space is extremely non-Hilbert itself. Before we analyse the spectral properties of contractive semigroups on extremely non-Hilbert spaces, we want to provide some examples of those spaces to give the reader an idea of when our subsequent results are applicable. 

\begin{example} \label{example_l_p_sequence_space_is_extremely_non_hilbert_for_p_not_even}
	(a) Let $1 \le p < \infty$, $p \not\in 2 \bbN$. Then the real sequence space $l^p := l^p(\bbN;\bbR)$ is extremely non-Hilbert (see \cite[Corollary 1.8]{Delbaen1998}). For finite dimensional $l^p$-spaces the same result was shown earlier in \cite[Proposition 1]{Lyubich1970}. \par
	(b) If $p \in 2\bbN$, then $l^p$ is not extremely non-Hilbert. This is obvious for $p = 2$, and for $p \in \{4,6,...\}$ it can be shown that even a finite dimensional $(\bbR^n, ||\cdot||_p)$ exists which isometrically contains a two-dimensional Hilbert space. This is proved for example in \cite[p.\,283--284]{Milman1988} (note however that the necessary dimension $n$ increases with $p$). See also \cite[the proof of Proposition 2]{Lyubich1970}, \cite{Lyubich1993} and \cite{Konig2004} for a discussion of this and related topics.
\end{example}

In contrast to $l^p$-sequence spaces, real-valued $L^p$-spaces on non-discrete measure spaces are not extremely non-Hilbert, in general. For $L^p$-spaces on the torus we will give an explicit example of an isometrically embedded two-dimensional Hilbert space in Example \ref{example_cpaital_l_p_not_extremly_non_hilbert_shift_sg}. Moreover, it can be shown by probabilistic methods that $L^p([0,1];\bbR)$ even isometrically contains the infinite dimensional sequence space $l^2$, see \cite[p.\,16]{Johnson2001}. \par
A further example of an extremely non-Hilbert space is the space $c := c(\bbN;\bbR)$ of real-valued convergent sequences on $\bbN$. This follows from the next, more general example:

\begin{example} \label{examp_c_k_is_extremely_non_hilbert_for_countable_k}
	Let $K$ be a compact Hausdorff space and let $C(K;\bbR)$ be the space of continuous, real-valued functions on $K$ which is endowed with the supremum norm. If $K$ is countable, then $C(K,\bbR)$ is extremely non-Hilbert.
	\begin{proof}
		Assume for a contradiction that $V \subset C(K;\bbR)$ is a two-dimensional Hilbert space. Since $V$ is isometrically isomorphic to $(\bbR^2, ||\cdot||_2)$, there are two vectors $x,y \in V$ such that
		\begin{align*}
			\varphi: [0,2\pi] \to V \text{,} \quad t \mapsto \cos(t) x + \sin(t) y
		\end{align*}
		is a mapping into the unit sphere of $V$. Hence, the mapping $t \mapsto ||\varphi(t)|| = \max_{k \in K} |\varphi(t)(k)|$ is identically $1$. \par
		Now, for each $k \in K$, let $A_k := \{t \in [0,2\pi]: |\varphi(t)(k)| = 1\}$. Each set $A_k$ is closed and we have $\bigcup_{k \in K} A_k = [0,2\pi]$. Since $K$ is countable, Baire's Theorem implies that at least one of the sets $A_k$, say $A_{k_0}$, has non-empty interior and thus contains a non-empty open interval $I$. For continuity reasons, the mapping
		\begin{align*}
			t \mapsto \varphi(t)(k_0) = \cos(t)x(k_0) + \sin(t)y(k_0)
		\end{align*}
		is either identically $1$ or identically $-1$ on $I$. Hence, it is identically $1$ or identically $-1$ on the whole interval $[0,2\pi]$, due to the Identity Theorem for analytic functions. This is a contradiction since $\cos$, $\sin$ and $\mathbbm{1}$ (denoting the function on $[0,2\pi]$ which is identically $1$) are linearly independent functions on $[0,2\pi]$.
	\end{proof}
\end{example}

Preceding to the above example, we claimed that the sequence space $c$ is extremely non-Hilbert. This follows indeed from Example \ref{examp_c_k_is_extremely_non_hilbert_for_countable_k} since $c$ is isometrically isomorphic to the space $C(\bbN \cup \{\infty\};\bbR)$, where $\bbN \cup \{\infty\}$ is the one-point-compactification of the discrete space $\bbN$. Note that this also implies that the space $c_0 := c_0(\bbN;\bbR)$ of all real-valued sequences which converge to $0$ is extremely non-Hilbert, since it is a closed subspace of $c$.

\subsection{$C_0$-semigroups on extremely non-Hilbert spaces} \label{subsection_c_0_semigroups_on_extremely_non_hilbert_spaces}

Since we have now several examples of extremely non-Hilbert spaces at hand, let us turn to contractive semigroups on them. In fact, we will not really need our semigroups to be contractive. In each of our theorems, one of the following asymptotic properties will suffice.

\begin{definition} \label{def_asymptotic_contractivity}
	Let $(e^{tA})_{t \ge 0}$ be a $C_0$-semigroup on a real Banach space $X$. The semigroup $(e^{tA})_{t \ge 0}$ is called
	\begin{itemize}
		\item[(a)] \emph{uniformly asymptotically contractive} if $\limsup_{t \to \infty} ||e^{tA}|| \le 1$. \par
		\item[(b)] \emph{strongly asymptotically contractive} if $\limsup_{t \to \infty} ||e^{tA}x|| \le 1$ for each $x \in X$ with $||x|| = 1$. \par
		\item[(c)] \emph{weakly asymptotically contractive} if $\limsup_{t \to \infty} | \langle e^{tA}x, x' \rangle | \le 1$ for all $x \in X$ and all $x' \in X'$ with $||x|| = ||x'|| = 1$.
	\end{itemize}
	For every operator $T \in \calL(X)$, the same notions are defined by replacing the semigroup by the powers $T^n$.
\end{definition}

If a $C_0$-semigroup $(e^{tA})_{t \ge 0}$ on $X$ is weakly asymptotically contractive, then it is bounded, and if an operator $T \in \calL(X)$ is weakly asymptotically contractive, then it is power-bounded; this follows from the Uniform Boundedness Principle. \par

We now begin our analysis with a result on the purely imaginary eigenvalues of the semigroup generator. 

\begin{theorem} \label{theorem_point_spectrum_for_sg_on_extremely_non_hilbert_spaces}
	Let $X$ be a real Banach space which is extremely non-Hilbert and let $(e^{tA})_{t \ge 0}$ be a weakly asymptotically contractive $C_0$-semigroup on $X$. Then $\sigma_{\operatorname{pnt}}(A) \cap i \bbR \subset \{0\}$.
	\begin{proof}
		Let $X_\bbC$ be a complexification of $X$. The complex extension $A_\bbC$ of $A$ generates a $C_0$-semigoup $(e^{tA_\bbC})_{t \ge 0}$ on $X_\bbC$, where each operator $e^{tA_\bbC}$ is the complex extension of the operator $e^{tA}$; by definition $\sigma_{\operatorname{pnt}}(A)$ is the point spectrum of $A_\bbC$. \par 
		Assume for a contradiction that $i\alpha$ is an eigenvalue of $A_\bbC$, where $\alpha \in \bbR \setminus \{0\}$. Then we have $A_\bbC z = i\alpha z$ for some $0 \not= z = x+iy \in X_\bbC$, where $x,y \in X$. One easily checks that $A_\bbC \overline z = -i\alpha \overline{z}$ and that $x$ and $y$ are linearly independent over $\bbR$. In particular, $x \not= 0$ and thus we may assume that $||x|| = 1$. \par
		Let $V$ be the linear span of $x$ and $y$ over $\bbR$. We show that $V$ a Hilbert space with respect to the norm induced by $X$. For each $t \ge 0$ we have
		\begin{align*}
			e^{tA}x = e^{tA_\bbC}\re z = \re (e^{tA_\bbC}z)  = \operatorname{Re}(e^{i\alpha t} z) = \cos(\alpha t)x - \sin(\alpha t)y \text{.}
		\end{align*}
		This shows that the orbit $(e^{tA}x)_{t \ge 0}$ is periodic. Together with the weak asymptotic contractivity of $(e^{tA})_{t\ge 0}$ this implies that $||e^{tA}x|| = 1$ for each $t \ge 0$.	Hence, the vectors $\cos(\alpha t)x - \sin(\alpha t)y$ are contained in the surface of the unit ball of $V$. Now, endow $\bbR^2$ with the euclidean norm $||\cdot||_2$ and consider the linear bijection $\phi: \bbR^2 \to V$, $w \mapsto w_1x - w_2 y$. Whenever $||w||_2 = 1$, the vector $w$ can be written as $w = (\cos(\alpha t), \sin(\alpha t))$ for some $t \ge 0$. Hence, $\phi(w) = \cos(\alpha t)x - \sin(\alpha t)y$ and thus, $\phi$ maps the surface of the unit ball of $(\bbR^2,||\cdot||_2)$ into the surface of the unit ball of $V$. This shows that $\phi$ is isometric, i.e.\ $V$ is a Hilbert space.
	\end{proof}
\end{theorem}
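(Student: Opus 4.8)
The plan is to argue by contradiction and to manufacture, out of a hypothetical purely imaginary eigenvalue $i\alpha$ with $\alpha \in \bbR \setminus \{0\}$, an isometrically embedded two-dimensional Hilbert space, which contradicts the hypothesis. First I would pass to a complexification $X_\bbC$ with complex extension $A_\bbC$, so that $\sigma_{\operatorname{pnt}}(A) = \sigma_{\operatorname{pnt}}(A_\bbC)$, and suppose $A_\bbC z = i\alpha z$ for some $z = x + iy \neq 0$ with $x,y \in X$. Applying complex conjugation gives $A_\bbC \overline{z} = -i\alpha \overline{z}$; since $i\alpha \neq -i\alpha$, the eigenvectors $z$ and $\overline{z}$ are linearly independent over $\bbC$, which forces $x$ and $y$ to be linearly independent. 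Hence $V := \operatorname{span}_\bbR(x,y)$ is genuinely two-dimensional, and after rescaling I may assume $\|x\| = 1$. The goal is then to show that the norm of $X$ restricted to $V$ is Euclidean.

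The second step is to compute the orbit of $x$ explicitly. Since $e^{tA}$ is the real part of $e^{tA_\bbC}$ on real vectors and $e^{tA_\bbC} z = e^{i\alpha t} z$, I obtain
\begin{align*}
  e^{tA} x = \operatorname{Re}(e^{i\alpha t} z) = \cos(\alpha t)\, x - \sin(\alpha t)\, y \text{.}
\end{align*}
In particular the orbit $(e^{tA}x)_{t \ge 0}$ is periodic with period $2\pi/|\alpha|$, and as $t$ runs through one period the right-hand side traces out the entire ``ellipse'' $\{w_1 x - w_2 y : w_1^2 + w_2^2 = 1\}$ in $V$. It therefore suffices to prove that \emph{every} point of this ellipse has norm exactly $1$: then the linear bijection $\phi : (\bbR^2, \|\cdot\|_2) \to V$, $\phi(w) = w_1 x - w_2 y$, maps the Euclidean unit sphere onto the unit sphere of $V$ and so, by homogeneity, is an isometry. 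This makes $V$ a two-dimensional Hilbert space inside $X$, the desired contradiction.

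The crux, and the step I expect to be the main obstacle, is upgrading weak asymptotic contractivity into the exact identity $\|e^{tA}x\| = 1$ for every $t \ge 0$. The upper bound $\|e^{tA}x\| \le 1$ should follow by fixing $t$, choosing via Hahn--Banach a norming functional $x'$ with $\|x'\| = 1$ and $\langle e^{tA}x, x'\rangle = \|e^{tA}x\|$, and then evaluating along the arithmetic progression $t + n\cdot(2\pi/|\alpha|)$: by periodicity the pairing stays constant, while weak asymptotic contractivity forces its $\limsup$ to be at most $1$. The delicate point is the matching lower bound. Here I would exploit that the semigroup acts on $V$ by (invertible) rotations and that the inverse rotation is again realised by some $e^{sA}$ with $s \ge 0$, using periodicity; applying the upper-bound argument to the unit vector $e^{tA}x / \|e^{tA}x\|$ and to the time $s \ge 0$ needed to rotate it back to $x$ yields $1 = \|x\| \le \|e^{tA}x\| \cdot 1$, hence $\|e^{tA}x\| \ge 1$. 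Care is needed because weak asymptotic contractivity only controls a $\limsup$ of matrix coefficients, so the entire argument hinges on converting this purely asymptotic information into pointwise equalities via the periodicity of the orbit.
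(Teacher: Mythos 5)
Your proposal is correct and follows essentially the same route as the paper: produce the periodic orbit $e^{tA}x = \cos(\alpha t)x - \sin(\alpha t)y$, use weak asymptotic contractivity together with periodicity to force $\|e^{tA}x\| = 1$ for all $t$, and conclude that $V = \operatorname{span}_\bbR\{x,y\}$ is an isometric copy of the Euclidean plane. The paper states the norm identity $\|e^{tA}x\| = 1$ in one line; your norming-functional argument for the upper bound and the inverse-rotation argument for the lower bound are exactly the details it leaves implicit.
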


It follows from Example \ref{example_l_p_sequence_space_is_extremely_non_hilbert_for_p_not_even} that the dual or the pre-dual of an extremely non-Hilbert space need not be extremely non-Hilbert, in general. For example, the sequence space $l^{\frac{4}{3}}$ is extremely non-Hilbert, while its dual and pre-dual space $l^{4}$ is not. Nevertheless, the assertion of the above Theorem \ref{theorem_point_spectrum_for_sg_on_extremely_non_hilbert_spaces} can be extended to reflexive Banach spaces with an extremely non-Hilbert dual space:

\begin{corollary} \label{cor_point_spectrum_for_sg_on_spaces_with_extremely_non_hilbert_dual_spaces}
	Let $X$ be a reflexive Banach space over the real field and suppose that its dual space $X'$ is extremely non-Hilbert. Let $(e^{tA})_{t \ge 0}$ be a weakly asymptotically contractive $C_0$-semigroup on $X$. Then $\sigma_{\operatorname{pnt}}(A) \cap i \bbR \subset \{0\}$.
	\begin{proof}
		Since $X$ is reflexive, the semigroup of adjoint operators $((e^{tA})')_{t \ge 0}$ is a $C_0$-semigroup on $X'$ and its generator is the adjoint $A'$ of $A$. The reflexivity of $X$ also implies that the adjoint semigroup $((e^{tA})')_{t \ge 0}$ is weakly asymptotically contractive, and thus we conclude that $\sigma_{\operatorname{pnt}}(A') \cap i \bbR \subset \{0\}$ by Theorem \ref{theorem_point_spectrum_for_sg_on_extremely_non_hilbert_spaces}. However, we have $\sigma_{\operatorname{pnt}}(A') \cap i \bbR \supset \sigma_{\operatorname{pnt}}(A) \cap i\bbR$ according to \cite[Lemma~2.3]{Arendt1988} since the semigroup $(e^{tA})_{t \ge 0}$ is bounded. This proves the corollary.
	\end{proof}
\end{corollary}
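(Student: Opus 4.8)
The plan is to reduce everything to Theorem \ref{theorem_point_spectrum_for_sg_on_extremely_non_hilbert_spaces} by passing to the dual space $X'$, which is where the extremely-non-Hilbert hypothesis now lives. First I would record the two standing consequences that make this transition legitimate. By the remark following Definition \ref{def_asymptotic_contractivity}, weak asymptotic contractivity forces the semigroup $(e^{tA})_{t\ge0}$ to be bounded. Combined with reflexivity of $X$, this guarantees that the family of adjoints $((e^{tA})')_{t\ge0}$ is again a $C_0$-semigroup (not merely a weak${}^\ast$-continuous one) whose generator is precisely the adjoint $A'$ of $A$. This is the step where reflexivity is indispensable: on a non-reflexive space the adjoint semigroup need not be strongly continuous, and one would be forced into the sun-dual formalism.

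Next I would check that the adjoint semigroup inherits weak asymptotic contractivity. Here reflexivity is used a second time, through the identification $X'' = X$: for unit vectors $x' \in X'$ and $x'' \in X'' = X$ one has $\langle (e^{tA})'x', x''\rangle = \langle e^{tA}x'', x'\rangle$, so the defining condition $\limsup_{t\to\infty}|\langle (e^{tA})'x',x''\rangle|\le 1$ for the adjoint semigroup is literally the weak asymptotic contractivity of $(e^{tA})_{t\ge0}$ itself. Since $X'$ is extremely non-Hilbert by assumption, Theorem \ref{theorem_point_spectrum_for_sg_on_extremely_non_hilbert_spaces} applied to $((e^{tA})')_{t\ge0}$ then yields $\sigma_{\operatorname{pnt}}(A')\cap i\bbR \subset \{0\}$.

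Finally I would transport this conclusion from $A'$ back to $A$. The key fact is that for a bounded $C_0$-semigroup every purely imaginary eigenvalue of the generator is also an eigenvalue of its adjoint, i.e.\ $\sigma_{\operatorname{pnt}}(A)\cap i\bbR \subset \sigma_{\operatorname{pnt}}(A')\cap i\bbR$; this is available from \cite[Lemma~2.3]{Arendt1988} precisely because boundedness has already been secured in the first step. Combining this inclusion with the preceding paragraph gives $\sigma_{\operatorname{pnt}}(A)\cap i\bbR \subset \{0\}$, as desired.

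I expect the main obstacle to be conceptual rather than computational, and it concentrates in this last transfer step: the inclusion $\sigma_{\operatorname{pnt}}(A)\cap i\bbR \subset \sigma_{\operatorname{pnt}}(A')\cap i\bbR$ is false for general operators and hinges on the boundary-spectrum theory of bounded semigroups (a purely imaginary eigenvalue of a bounded generator necessarily lies on the boundary of the spectrum, which is what upgrades it to an eigenvalue of the adjoint). Citing this as a black box keeps the argument short, but it is the genuinely non-elementary ingredient; by contrast, the verification that the adjoint semigroup is strongly continuous and weakly asymptotically contractive is routine once reflexivity is in hand.
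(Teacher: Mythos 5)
Your proposal is correct and follows essentially the same route as the paper's own proof: pass to the adjoint semigroup on $X'$ (using reflexivity for strong continuity and for the inheritance of weak asymptotic contractivity), apply Theorem \ref{theorem_point_spectrum_for_sg_on_extremely_non_hilbert_spaces} there, and transfer back via the inclusion $\sigma_{\operatorname{pnt}}(A)\cap i\bbR \subset \sigma_{\operatorname{pnt}}(A')\cap i\bbR$ from \cite[Lemma~2.3]{Arendt1988}, which requires the boundedness of the semigroup. Your additional commentary on where reflexivity and boundedness enter is accurate but does not change the argument.
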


Recall again that an operator $A$ on a real Banach space $X$ is said to have \emph{compact resolvent} if its complex extension $A_\bbC$ to a complexification $X_\bbC$ of $X$ has compact resolvent, a property which does not depend on the choice of $X_\bbC$. For semigroups whose generator has compact resolvent, Theorem \ref{theorem_point_spectrum_for_sg_on_extremely_non_hilbert_spaces} and Corollary \ref{cor_point_spectrum_for_sg_on_spaces_with_extremely_non_hilbert_dual_spaces} yield the following convergence result.

\begin{corollary} \label{cor_convergence_for_compact_resolvent_on_extremely_non_hilbert_space}
	Suppose the assumptions of Theorem \ref{theorem_point_spectrum_for_sg_on_extremely_non_hilbert_spaces} or of Corollary \ref{cor_point_spectrum_for_sg_on_spaces_with_extremely_non_hilbert_dual_spaces} are fulfilled. If $A$ has compact resolvent, then $e^{tA}$ strongly converges as $t \to \infty$.
	\begin{proof}
		We know from Theorem \ref{theorem_point_spectrum_for_sg_on_extremely_non_hilbert_spaces} respectively from Corollary \ref{cor_point_spectrum_for_sg_on_spaces_with_extremely_non_hilbert_dual_spaces} that $\sigma_{\operatorname{pnt}}(A) \cap i \bbR \subset \{0\}$. Hence the assertion follows from \cite[Theorem V.2.14 and Corollary V.2.15]{Engel2000}.
	\end{proof}
\end{corollary}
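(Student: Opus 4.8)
The plan is to reduce the statement to the abstract asymptotic theory for bounded semigroups, the genuine spectral input having essentially been supplied already. Under the hypotheses of either Theorem \ref{theorem_point_spectrum_for_sg_on_extremely_non_hilbert_spaces} or Corollary \ref{cor_point_spectrum_for_sg_on_spaces_with_extremely_non_hilbert_dual_spaces} we know two things: the semigroup $(e^{tA})_{t \ge 0}$ is bounded (weak asymptotic contractivity together with the Uniform Boundedness Principle) and $\sigma_{\operatorname{pnt}}(A) \cap i\bbR \subset \{0\}$. My first step would be to upgrade this from the point spectrum to the full spectrum. Fixing a complexification $X_\bbC$, the assumption that $A$ has compact resolvent means that $R(\lambda_0, A_\bbC)$ is compact for some $\lambda_0$ in the resolvent set; its spectrum then consists of $0$ together with isolated eigenvalues of finite multiplicity, and via the correspondence $\lambda \mapsto (\lambda_0 - \lambda)^{-1}$ this forces $\sigma(A_\bbC)$ to be a discrete set of eigenvalues of finite algebraic multiplicity. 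Hence $\sigma(A) = \sigma_{\operatorname{pnt}}(A)$, and consequently $\sigma(A) \cap i\bbR \subset \{0\}$.

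Next I would record the dynamical consequence of the compact resolvent: together with boundedness it makes every orbit relatively compact. For $x \in D(A_\bbC)$ write $x = R(\lambda_0, A_\bbC) y$ with $y := (\lambda_0 - A_\bbC) x$; since the resolvent commutes with the semigroup, $e^{tA_\bbC} x = R(\lambda_0, A_\bbC)\, e^{tA_\bbC} y$, and because $\{e^{tA_\bbC} y : t \ge 0\}$ is bounded while $R(\lambda_0, A_\bbC)$ is compact, the orbit $\{e^{tA_\bbC} x : t \ge 0\}$ is relatively compact. Boundedness of the semigroup then propagates this from the dense domain $D(A_\bbC)$ to every vector by a routine approximation argument. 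With relatively compact orbits and $\sigma(A_\bbC) \cap i\bbR \subset \{0\}$ in hand, the convergence results of \cite{Engel2000} (Theorem V.2.14 and Corollary V.2.15) apply and yield strong convergence of $(e^{tA_\bbC})_{t \ge 0}$ as $t \to \infty$.

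It then remains to return to the real space. Strong convergence on $X_\bbC$ restricts to strong convergence on the isometric copy $J(X)$ of $X$, so $e^{tA} x$ converges as $t \to \infty$ for every $x \in X$, which is exactly the assertion.

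I do not expect a genuine obstacle here, because the substantive analytic work was already done in establishing the triviality of the peripheral point spectrum. The steps that require care are of a bookkeeping nature instead: one must check that boundedness and the compact-resolvent property survive the passage to the complexification — they do, since complex extensions have norms comparable to those of the underlying real operators and $R(\lambda_0, A_\bbC)$ is the complex extension of the real resolvent — and one must correctly extract from the single hypothesis ``compact resolvent'' both the identity $\sigma(A) = \sigma_{\operatorname{pnt}}(A)$ and the relative compactness of orbits that the abstract convergence theorem takes as input. Once these are in place, the conclusion is immediate.
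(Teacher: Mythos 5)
Your argument is correct and follows essentially the same route as the paper: the paper likewise combines the triviality of $\sigma_{\operatorname{pnt}}(A) \cap i\bbR$ with \cite[Theorem V.2.14 and Corollary V.2.15]{Engel2000}, and your intermediate steps (relative compactness of orbits via the compact resolvent and boundedness, then the decomposition theorem) merely unpack the content of those citations. The upgrade from the point spectrum to the full spectrum is harmless but not needed here, since the cited decomposition only requires control of the purely imaginary eigenvalues.
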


Let us mention the particular case of $l^p$-spaces in an extra corollary:

\begin{corollary} \label{cor_peripheral_point_spectrum_and_convergence_on_l_p_sequence_spaces}
	Let $(e^{tA})_{t \ge 0}$ be a strongly continuous semigroup on $l^p = l^p(\bbN; \bbR)$ for $1 \le p < \infty$ and $p \not= 2$. If $(e^{tA})_{t\ge 0}$ is weakly asymptotically contractive, then $\sigma_{\operatorname{pnt}}(A) \cap i \bbR \subset \{0\}$. \par
	If, in addition, $A$ has compact resolvent, then $e^{tA}$ strongly converges as $t \to \infty$.
	\begin{proof}
		If $p$ is not an even integer, then $l^p$ is an extremly non-Hilbert space by Example \ref{example_l_p_sequence_space_is_extremely_non_hilbert_for_p_not_even} and thus the assumptions of Theorem \ref{theorem_point_spectrum_for_sg_on_extremely_non_hilbert_spaces} are fulfilled. If $p > 2$ is an even integer instead, then the conjugate index $p'$, which is defined by $\frac{1}{p} + \frac{1}{p'} = 1$, is not an even integer. Thus, the dual space $(l^p)' = l^{p'}$ is extremely non-Hilbert and so the assumptions of Corollary \ref{cor_point_spectrum_for_sg_on_spaces_with_extremely_non_hilbert_dual_spaces} are fulfilled.
	\end{proof}
\end{corollary}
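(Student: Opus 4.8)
The plan is to reduce the statement to Theorem~\ref{theorem_point_spectrum_for_sg_on_extremely_non_hilbert_spaces}, Corollary~\ref{cor_point_spectrum_for_sg_on_spaces_with_extremely_non_hilbert_dual_spaces} and Corollary~\ref{cor_convergence_for_compact_resolvent_on_extremely_non_hilbert_space} by means of a case distinction according to whether or not $p$ is an even integer. Since $1 \le p < \infty$ and $p \neq 2$, exactly one of the following two situations occurs: either $p \notin 2\bbN$, or $p$ is an even integer with $p \ge 4$.

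First I would treat the case $p \notin 2\bbN$, which in particular covers $p = 1$ and every $p \in [1,2)$. Here Example~\ref{example_l_p_sequence_space_is_extremely_non_hilbert_for_p_not_even}(a) tells us directly that $l^p$ is extremely non-Hilbert, so that the hypotheses of Theorem~\ref{theorem_point_spectrum_for_sg_on_extremely_non_hilbert_spaces} are satisfied and we obtain $\sigma_{\operatorname{pnt}}(A) \cap i\bbR \subset \{0\}$ at once.

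Next I would handle the case where $p \ge 4$ is an even integer. Then $l^p$ itself is no longer extremely non-Hilbert by Example~\ref{example_l_p_sequence_space_is_extremely_non_hilbert_for_p_not_even}(b), so the idea is to pass to the dual space. The conjugate exponent $p'$, determined by $\tfrac{1}{p} + \tfrac{1}{p'} = 1$, satisfies $1 < p' < 2$ and is in particular not an even integer; hence $(l^p)' = l^{p'}$ is extremely non-Hilbert by Example~\ref{example_l_p_sequence_space_is_extremely_non_hilbert_for_p_not_even}(a). Since $1 < p < \infty$, the space $l^p$ is reflexive, so the assumptions of Corollary~\ref{cor_point_spectrum_for_sg_on_spaces_with_extremely_non_hilbert_dual_spaces} are met and we again conclude $\sigma_{\operatorname{pnt}}(A) \cap i\bbR \subset \{0\}$.

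Finally, the convergence assertion follows in both cases: having verified the hypotheses of Theorem~\ref{theorem_point_spectrum_for_sg_on_extremely_non_hilbert_spaces} (first case) or of Corollary~\ref{cor_point_spectrum_for_sg_on_spaces_with_extremely_non_hilbert_dual_spaces} (second case), the additional compact-resolvent assumption lets me invoke Corollary~\ref{cor_convergence_for_compact_resolvent_on_extremely_non_hilbert_space}, which yields strong convergence of $e^{tA}$ as $t \to \infty$. The argument involves no genuine difficulty; the only point that requires care is to organise the case distinction so that reflexivity---which fails for $p = 1$---is used only in the even-integer case $p \ge 4$, where it indeed holds, while the potentially non-reflexive values $p \in [1,2)$ are dealt with by the direct argument of the first case.
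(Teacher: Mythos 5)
Your proposal is correct and follows essentially the same route as the paper: the same case distinction between $p \notin 2\bbN$ (where $l^p$ is extremely non-Hilbert, so Theorem~\ref{theorem_point_spectrum_for_sg_on_extremely_non_hilbert_spaces} applies) and even $p \ge 4$ (where reflexivity and the fact that $(l^p)' = l^{p'}$ with $p'$ not an even integer let Corollary~\ref{cor_point_spectrum_for_sg_on_spaces_with_extremely_non_hilbert_dual_spaces} apply), with the convergence statement then following from Corollary~\ref{cor_convergence_for_compact_resolvent_on_extremely_non_hilbert_space}. Your explicit remark that reflexivity is only needed in the even-integer case, where $p > 2$ guarantees it, is a correct and worthwhile clarification of a point the paper leaves implicit.
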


The following connection of Corollary \ref{cor_peripheral_point_spectrum_and_convergence_on_l_p_sequence_spaces} to the spectral theory of positive semigroups is interesting:

\begin{remark}
	It is shown in \cite[Theorem 9]{Davies2005} that a contractive positive $C_0$-semigroup on $l^p$ for $1 \le p < \infty$ always fulfils $\sigma_{\operatorname{pnt}}(A) \cap i \bbR \subset \{0\}$. Corollary \ref{cor_peripheral_point_spectrum_and_convergence_on_l_p_sequence_spaces} shows that the assumption of positivity is not needed for this result whenever $p \not= 2$. Note however, that the result for positive semigroups can be generalized to a large class of semigroups on other Banach lattices, as well (see the articles quoted in the introduction for details). 
\end{remark}

Now, let us demonstrate by a couple of examples what goes wrong with our above results on spaces which are not extremely non-Hilbert. First, we consider an example of a contraction semigroup on a two-dimensional Hilbert-space:

\begin{example}
	Consider the two-dimensional euclidean space $(\bbR^2,||\cdot||_2)$ and the $C_0$-semigroup $(e^{tA})_{t \ge 0}$, where the matrices of $A$ and $e^{tA}$ are given by
	\begin{align*}
		A =
		\begin{pmatrix}
			0 & -1 \\
			1 & 0
		\end{pmatrix}
		\quad \text{and} \quad
		e^{tA} =
		\begin{pmatrix}
			\cos t & -\sin t \\
			\sin t & \cos t
		\end{pmatrix}
		\text{.}
	\end{align*}
	Then $e^{tA}$ acts as a rotation with angle $t$ on $\bbR^2$, and $(e^{tA})_{t \ge 0}$ is clearly a contraction semigroup with respect to the euclidean norm $||\cdot||_2$. Since $(\bbR^2,||\cdot||_2)$ is a Hilbert space, we cannot apply Theorem \ref{theorem_point_spectrum_for_sg_on_extremely_non_hilbert_spaces}, and indeed the spectrum of the generator $A$ is given by $\sigma(A) = \{i,-i\}$, i.e.\ the assertion of Theorem \ref{theorem_point_spectrum_for_sg_on_extremely_non_hilbert_spaces} fails in our example. \par 
	It is also very instructive to observe what happens on $(\bbR^2,||\cdot||_p)$ for $p \not= 2$. If we consider the same semigroup as above on such a space, then we still have $\sigma(A) = \{i,-i\}$, but the space is extremely non-Hilbert now (for $p\not\in 2\bbN$ this follows from Example \ref{example_l_p_sequence_space_is_extremely_non_hilbert_for_p_not_even}, but it is true even for $p \in 2\bbN \setminus \{2\}$ since our space is two-dimensional). Hence, another assumption of Theorem \ref{theorem_point_spectrum_for_sg_on_extremely_non_hilbert_spaces} must fail here, and indeed, it is easily seen that our rotation semigroup is not contractive with respect to the $||\cdot||_p$-norm due to the low symmetry of the unit ball of this norm.
\end{example}

Our next example shows that Theorem \ref{theorem_point_spectrum_for_sg_on_extremely_non_hilbert_spaces} does in general not hold on $L^p$-spaces on non-discrete measure spaces.

\begin{example} \label{example_cpaital_l_p_not_extremly_non_hilbert_shift_sg}
	Let $1 \le p < \infty$, let the complex unit circle $\bbT$ be equipped with a non-zero Haar measure $\mu$ and let $(e^{tA})_{t \ge 0}$ be the shift semigroup on $L^p(\bbT;\bbR)$, i.e.\ $e^{tA}f(z) = f(e^{it}z)$ for all $f \in L^p(\bbT;\bbR)$. Then $(e^{tA})$ is contractive, but the point spectrum of $A$ is given by $\sigma_{\operatorname{pnt}}(A) = i \bbZ$. \par 
	By virtue of Theorem \ref{theorem_point_spectrum_for_sg_on_extremely_non_hilbert_spaces} this implies that $L^p(\bbT;\bbR)$ is not extremely non-Hilbert; however we can also see this explicitly. In fact, let $f_1: \bbT \to \bbR$, $f_1(z) = \re z$ and $f_2: \bbT \to \bbR$, $f_2(z) = \im z$. Let $\alpha, \beta \in \bbR$ and choose $\varphi \in [0,2\pi)$ and $r \ge 0$ such that $\alpha - i\beta = re^{i\varphi}$. Then we obtain
	\begin{align*}
		||\alpha f_1 + \beta f_2||_p = & \Big[ \int_\bbT \big|\re ((\alpha - i \beta) z) \big|^p \, |dz|\Big]^{\frac{1}{p}} = \Big[ \int_\bbT \big|\re (re^{i\varphi} z) \big|^p \, |dz|\Big]^{\frac{1}{p}} = \\
		= & \, r \cdot \Big[ \int_\bbT \big|\re z\big|^p \, |dz| \Big]^{\frac{1}{p}} = (\alpha^2 + \beta^2)^{\frac{1}{2}} \cdot ||f_1||_p \text{.}
	\end{align*}
	Hence, $(\alpha, \beta) \mapsto \frac{1}{||f_1||_p}(\alpha f_1 + \beta f_2)$ yields an isometric isomorphism between the two-dimensional euclidean space $(\bbR^2, ||\cdot||_2)$ and the subspace $\operatorname{span}_\bbR \{ f_1,f_2 \}$ of $L^p(\bbT;\bbR)$. The reader might also compare \cite[Remark 5]{Goldstein1973} for a slightly different presentation of this example.
\end{example}

Finally, we want to demonstrate that it is essential for our above results (and also for the results in the subsequent sections) that we consider $C_0$-semigroups of \emph{real} operators:

\begin{example}
	Consider the space $(\bbC^n, ||\cdot||_p)$ for some arbitrary $p \in [1,\infty]$ and let $A \in \bbC^{n \times n}$ be the diagonal matrix whose diagonal entries are all equal to $i$. Then $(e^{tA})_{t \ge 0}$ is clearly a contractive $C_0$-semigroup on $(\bbC^n, ||\cdot||_p)$, but we obtain $\sigma(A) = \{i\}$ for the spectrum of its generator.
\end{example}

\subsection{Single operators on extremely non-Hilbert spaces}

Next, we show how Theorem \ref{theorem_point_spectrum_for_sg_on_extremely_non_hilbert_spaces} can be applied to yield a result on the point spectrum of a single operator.

\begin{theorem} \label{theorem_point_spectrum_for_operator_on_extremely_non_hilbert_space}
	Let $X$ be a real Banach space which is extremely non-Hilbert and let $T \in \calL(X)$ be weakly asymptotically contractive. Then $\sigma_{\operatorname{pnt}}(T) \cap \bbT$ consists only of roots of unity.
	\begin{proof}
		Let $T_\bbC$ be the complex extension of $T$ to a complexification $X_\bbC$ of $X$. Assume for a contradiction that $e^{i\alpha} \in \sigma_{\operatorname{pnt}}(T) \cap \bbT$ (where $\alpha \in \bbR$) is not a root of unity and let $T_\bbC z = e^{i\alpha} z$ for some $0 \not= z = x + iy \in X_\bbC$, where $x,y \in X$. It is easy to show that $x$ and $y$ are linearly independent over $\bbR$ and that we have $T_\bbC \, \overline{z} = e^{-i\alpha} \, \overline{z}$. \par
		Let $V = \operatorname{span}_\bbR\{x,y\}$ and $V_\bbC = \operatorname{span}_\bbC\{z,\overline{z}\}$. Then we have $V_\bbC = V \oplus iV$, and therefore $V_\bbC$ is a complexification of $V$.
		We define a $C_0$-semigroup $(e^{tA_\bbC})_{t \ge 0}$ on $V_\bbC$ by means of
		\begin{align*}
			e^{tA_\bbC}z = e^{i t}z\text{,} \quad e^{tA_\bbC} \, \overline{z} = e^{- i t} \, \overline{z} \text{.}
		\end{align*}
		Note that $e^{tA_\bbC}$ is well-defined since $z$ and $\overline{z}$ are linearly independent over $\bbC$ (as they are eigenvectors of $T_\bbC$ for two different eigenvalues). Clearly, this semigroup is strongly (in fact uniformly) continuous and and its generator $A_\bbC$ is the complexification of an operator $A \in \calL(V)$, given by
		\begin{align*}
			Ax = -y \quad \text{and} \quad Ay = x \text{.}
		\end{align*}
		We have $\sigma_{\operatorname{pnt}}(A) = \{-i,i\}$, and the semigroup $(e^{tA_\bbC})_{t \ge 0}$ is the complexification of the semigroup $(e^{tA})_{t \ge 0}$. We now show that $(e^{tA})_{t \ge 0}$ is a contraction semigroup on $V$, so that we can apply Theorem \ref{theorem_point_spectrum_for_sg_on_extremely_non_hilbert_spaces}: \par 
		Let $t \ge 0$. Since $e^{i\alpha}$ is not a root of unity, we find a sequence of integers $N_n \to \infty$ such that $e^{iN_n\alpha} \to e^{it}$. Since $V \subset V_\bbC$, each element $v \in V$ can be written in the form $v = \lambda_1 z + \lambda_2 \overline{z}$ with complex scalars $\lambda_1,\lambda_2$. We thus obtain for each $x' \in X'$ that 
		\begin{align*}
			|\langle e^{tA}v, x' \rangle| = & |\langle e^{tA_\bbC}v, \, (x')_\bbC \rangle| = |\langle \lambda_1 e^{it} z + \lambda_2 e^{-it} \overline{z} , \, (x')_\bbC \rangle| = \\
			= & \lim_n | \langle \lambda_1 T_\bbC^{N_n}z + \lambda_2 T_\bbC^{N_n} \overline{z} , \, (x')_\bbC \rangle | = \lim_n |\langle T^{N_n} v , x' \rangle| \le |\langle v , x' \rangle| \text{.}
		\end{align*}
		Thus, each operator $e^{tA}$ is contractive. Since $X$ is extremely non-Hilbert, so is its subspace $V$, and we conclude from Theorem \ref{theorem_point_spectrum_for_sg_on_extremely_non_hilbert_spaces} that $\sigma_{\operatorname{pnt}}(A) \cap i \bbR \subset \{0\}$, which is a contradiction.
	\end{proof}
\end{theorem}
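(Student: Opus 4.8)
The plan is to argue by contradiction. Suppose that some $e^{i\alpha} \in \sigma_{\operatorname{pnt}}(T) \cap \bbT$, with $\alpha \in \bbR$, is \emph{not} a root of unity; I will show that this is incompatible with $X$ being extremely non-Hilbert by reducing to the semigroup result already at our disposal, Theorem~\ref{theorem_point_spectrum_for_sg_on_extremely_non_hilbert_spaces}. First I would fix an eigenvector $0 \ne z = x + iy \in X_\bbC$ of the complex extension $T_\bbC$ for the eigenvalue $e^{i\alpha}$, where $x,y \in X$. Since $T$ is real, applying complex conjugation shows that $\overline z$ is an eigenvector for $e^{-i\alpha}$; because $e^{i\alpha}$ is not a root of unity we have $e^{i\alpha} \ne e^{-i\alpha}$, so $z$ and $\overline z$ lie in distinct eigenspaces and are therefore linearly independent over $\bbC$, which forces $x$ and $y$ to be linearly independent over $\bbR$. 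Setting $V := \operatorname{span}_\bbR\{x,y\}$ gives a two-dimensional subspace of $X$ which, as a subspace of an extremely non-Hilbert space, is again extremely non-Hilbert.

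Next I would manufacture on $V$ a rotation semigroup whose generator has nonzero purely imaginary point spectrum. Passing to the complexification $V_\bbC = \operatorname{span}_\bbC\{z,\overline z\}$, I define $(e^{tA_\bbC})_{t \ge 0}$ by $e^{tA_\bbC} z = e^{it} z$ and $e^{tA_\bbC} \overline z = e^{-it} \overline z$; this is well defined because $z$ and $\overline z$ are $\bbC$-linearly independent, it is uniformly continuous, and it is the complexification of a real (uniformly continuous) semigroup $(e^{tA})_{t \ge 0}$ on $V$ whose generator $A$ satisfies $Ax = -y$ and $Ay = x$, so that $\sigma_{\operatorname{pnt}}(A) = \{i,-i\}$. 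The whole proof then hinges on showing that $(e^{tA})_{t \ge 0}$ is weakly asymptotically contractive on $V$: once this is established, Theorem~\ref{theorem_point_spectrum_for_sg_on_extremely_non_hilbert_spaces} applies to the extremely non-Hilbert space $V$ and yields $\sigma_{\operatorname{pnt}}(A) \cap i\bbR \subset \{0\}$, contradicting $i \in \sigma_{\operatorname{pnt}}(A)$ and finishing the argument.

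The step I expect to be the main obstacle is precisely transferring the weak asymptotic contractivity of $T$ to this artificially defined semigroup, and the bridge is the arithmetic of an irrational rotation. Since $e^{i\alpha}$ is not a root of unity, the orbit $\{e^{in\alpha} : n \in \bbN\}$ is dense in $\bbT$, so for each fixed $t \ge 0$ there is a sequence of integers $N_n \to \infty$ with $e^{iN_n\alpha} \to e^{it}$ (and hence $e^{-iN_n\alpha} \to e^{-it}$). Writing a general $v \in V$ as $v = \lambda_1 z + \lambda_2 \overline z$ with $\lambda_1, \lambda_2 \in \bbC$ and using $T_\bbC^{N_n} z = e^{iN_n\alpha} z$, I would observe that $T_\bbC^{N_n} v \to e^{tA_\bbC} v$ in the finite-dimensional space $V_\bbC$; testing against an arbitrary $x' \in X'$ then gives $\langle e^{tA} v, x' \rangle = \lim_n \langle T^{N_n} v, x' \rangle$. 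Because $N_n \to \infty$, weak asymptotic contractivity of $T$ bounds the right-hand side by $\limsup_{m \to \infty} |\langle T^m v, x' \rangle| \le \|v\|\,\|x'\|$, so $|\langle e^{tA} v, x' \rangle| \le \|v\|\,\|x'\|$ for every $x' \in X'$. Taking the supremum over $x'$ in the unit ball of $X'$, and using that $V$ carries the norm inherited from $X$, shows $\|e^{tA} v\| \le \|v\|$; thus each $e^{tA}$ is contractive and the periodic semigroup $(e^{tA})_{t \ge 0}$ is in particular weakly asymptotically contractive, completing the reduction.
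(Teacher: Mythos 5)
Your proposal is correct and follows essentially the same route as the paper: reduce to Theorem~\ref{theorem_point_spectrum_for_sg_on_extremely_non_hilbert_spaces} by building the rotation semigroup with generator $Ax=-y$, $Ay=x$ on $V=\operatorname{span}_\bbR\{x,y\}$, and transfer contractivity from $T$ via a sequence of integers $N_n\to\infty$ with $e^{iN_n\alpha}\to e^{it}$. Your final bound $\lim_n|\langle T^{N_n}v,x'\rangle|\le\|v\|\,\|x'\|$ is in fact the cleaner (and correct) form of the estimate the paper states.
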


Similar to Corollary \ref{cor_point_spectrum_for_sg_on_spaces_with_extremely_non_hilbert_dual_spaces}, we obtain the following dual result:

\begin{corollary} \label{theorem_point_spectrum_for_operator_on_a_space_with_extremely_non_hilbert_dual_space}
	Let $X$ be a reflexive Banach space over the real field and suppose that its dual space $X'$ is extremely non-Hilbert. Let $T \in \calL(X)$ be weakly asymptotically contractive. Then $\sigma_{\operatorname{pnt}}(T) \cap \bbT$ consists only of roots of unity.
	\begin{proof}
		Since $X$ is reflexive, the adjoint operator $T'$ is also weakly asymptotically contractive, so $\sigma_{\operatorname{pnt}}(T') \cap \bbT$ only consists of roots of unity by Theorem \ref{theorem_point_spectrum_for_operator_on_extremely_non_hilbert_space}. Moreover, since $T$ is power-bounded, we can show with the same proof as in \cite[Lemma~2.3]{Arendt1988} that $\sigma_{\operatorname{pnt}}(T') \cap \bbT \supset \sigma_{\operatorname{pnt}}(T) \cap \bbT$. This proves the corollary.
	\end{proof}
\end{corollary}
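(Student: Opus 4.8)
The plan is to reduce the statement to Theorem~\ref{theorem_point_spectrum_for_operator_on_extremely_non_hilbert_space} by passing to the adjoint operator $T'$ on $X'$, mirroring exactly the way Corollary~\ref{cor_point_spectrum_for_sg_on_spaces_with_extremely_non_hilbert_dual_spaces} was derived from Theorem~\ref{theorem_point_spectrum_for_sg_on_extremely_non_hilbert_spaces}. By hypothesis $X'$ is extremely non-Hilbert, so as soon as we know that $T'$ is weakly asymptotically contractive, Theorem~\ref{theorem_point_spectrum_for_operator_on_extremely_non_hilbert_space} applied to $T' \in \calL(X')$ tells us that $\sigma_{\operatorname{pnt}}(T') \cap \bbT$ consists only of roots of unity. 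The corollary then follows once we establish the inclusion $\sigma_{\operatorname{pnt}}(T) \cap \bbT \subseteq \sigma_{\operatorname{pnt}}(T') \cap \bbT$.

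First I would check that $T'$ is weakly asymptotically contractive. Weak asymptotic contractivity of $T$ entails power-boundedness, say $M := \sup_n \|T^n\| < \infty$, and since $\|(T')^n\| = \|(T^n)'\| = \|T^n\|$ the adjoint is power-bounded as well. For the asymptotic estimate, fix unit vectors $x' \in X'$ and $x'' \in (X')'$. Using reflexivity I identify $(X')' = X'' = X$, so $x''$ is a unit vector $x \in X$, and the duality relation $\langle (T')^n x', x\rangle = \langle T^n x, x'\rangle$ yields
\begin{align*}
  \limsup_{n \to \infty} \big|\langle (T')^n x', x''\rangle\big| = \limsup_{n \to \infty} \big|\langle T^n x, x'\rangle\big| \le 1.
\end{align*}
This is precisely the required estimate for $T'$; note that reflexivity is what allows us to test $T'$ only against functionals originating in $X$ itself.

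The remaining inclusion is the single-operator version of \cite[Lemma~2.3]{Arendt1988}, and I expect it to be the crux of the argument. Given $\lambda \in \bbT$ with $Tx = \lambda x$ for a unit vector $x$, I would pick $\phi \in X'$ with $\|\phi\| = 1$ and $\langle x, \phi\rangle = 1$ by Hahn--Banach, and form the Ces\`aro-type averages $\phi_n := \frac1n \sum_{k=0}^{n-1} \overline{\lambda}^{\,k} (T')^k \phi$. A short telescoping computation gives $(\lambda - T')\phi_n = \tfrac{\lambda}{n}\big(\phi - \overline{\lambda}^{\,n}(T')^n\phi\big)$, so that $\|(\lambda - T')\phi_n\| \le (1+M)/n \to 0$, while $\langle x, \phi_n\rangle = 1$ for all $n$ keeps the sequence bounded and bounded away from $0$. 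The obstacle is that this only exhibits $\lambda \in \sigma_{\operatorname{appr}}(T')$ rather than a genuine eigenvalue. To upgrade it, I would use the weak${}^*$-compactness of bounded subsets of the dual $X'$ (Banach--Alaoglu) to pass to a weak${}^*$-cluster point $\psi$ of $(\phi_n)$; since $T'$ is weak${}^*$-continuous, letting $n \to \infty$ in $(\lambda - T')\phi_n \to 0$ forces $T'\psi = \lambda\psi$, and $\langle x, \psi\rangle = 1$ ensures $\psi \neq 0$, whence $\lambda \in \sigma_{\operatorname{pnt}}(T')$. This last step uses only power-boundedness, so that reflexivity enters solely through the previous paragraph.
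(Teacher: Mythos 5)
Your proposal is correct and follows exactly the paper's route: pass to the adjoint $T'$ on the extremely non-Hilbert space $X'$, use reflexivity to transfer weak asymptotic contractivity, apply Theorem \ref{theorem_point_spectrum_for_operator_on_extremely_non_hilbert_space}, and establish $\sigma_{\operatorname{pnt}}(T) \cap \bbT \subset \sigma_{\operatorname{pnt}}(T') \cap \bbT$ via power-boundedness. The only difference is that you write out the Ces\`aro-average/weak${}^*$-cluster-point argument that the paper delegates to \cite[Lemma~2.3]{Arendt1988}, and your version of it is the correct discrete analogue (modulo the routine remark that one should work with the complex extensions on $X_\bbC$ and $(X_\bbC)' = (X')_\bbC$).
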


Again, we note that the sequence spaces $l^p$ for $1 \le p < \infty$ and $p\not= 2$ fulfil the assumption of either Theorem \ref{theorem_point_spectrum_for_operator_on_extremely_non_hilbert_space} or Corollary \ref{theorem_point_spectrum_for_operator_on_a_space_with_extremely_non_hilbert_dual_space}. \par

Under additional assumptions on $T$, Theorem \ref{theorem_point_spectrum_for_operator_on_extremely_non_hilbert_space} and Corollary \ref{theorem_point_spectrum_for_operator_on_a_space_with_extremely_non_hilbert_dual_space} can be used to derive results on the asymptotic behaviour of the powers $T^n$. However, we delay this to Subsection \ref{subsection_single_operators_on_projectively_non_hilbert_spaces}. There we first prove additional spectral results on single operators on another type of Banach spaces; then we describe the asymptotic behaviour of $T^n$ in Corollary \ref{cor_periodicity_of_compact_operators_on_projectively_non_hilbert_space}.

\section{The point spectrum on projectively non-Hilbert spaces} \label{section_peripheral_point_spectrum_on_projectively_non_hilbert_spaces}

\subsection{Projectively non-Hilbert spaces}

The results of Section \ref{section_peripheral_point_spectrum_on_extremely_non_hilbert_spaces} all have the major flaw that they are applicable only on the small range of Banach spaces which are extremely non-Hilbert or have an extremely non-Hilbert dual space. We have seen in Example \ref{example_cpaital_l_p_not_extremly_non_hilbert_shift_sg} that $L^p$-spaces on non-discrete measure spaces do, in general, not fulfil this property. This indicates that we should consider Banach spaces which fulfil only a weaker geometric condition (at the cost of extra conditions on our semigroup, of course). The geometric condition in the subsequent Definition \ref{def_projectively_non_hilbert_space} seems to be the appropriate notion to do this. Before stating this definition, let us recall that a subspace $V$ of a (real or complex) Banach space $X$ is said to \emph{admit a contractive linear projection in $X$} if there is a projection $P \in \calL(X)$ such that $PX = V$ and such that $||P|| \le 1$.

\begin{definition} \label{def_projectively_non_hilbert_space}
	A real Banach space $X$ is called \emph{projectively non-Hilbert} if it does not isometrically contain a two-dimensional Hilbert space $V \subset X$ which admits a contractive linear projection in $X$.
\end{definition}

Note that if $X$ is a projectively non-Hilbert space and $P \in \calL(X)$ is a contractive projection, then the range $PX$ is a projectively non-Hilbert space, too. It was shown by Lyubich in \cite{Lyubich1970} that projectively non-Hilbert spaces provide an appropriate setting for the spectral analysis of finite dimensional (and more generally, for compact) contractive operators. Preceding to his work, Krasnosel'ski{\u\i} achieved similar results in \cite{Krasnoselskii1968}, but on a more special class of Banach spaces which he called \emph{completely non-Hilbert spaces}. \par 
Of course, a Banach space which is extremely non-Hilbert is also projectively non-Hilbert, but in fact the class of projectively non-Hilbert spaces is much larger. Again, we start with several examples for those spaces.

\begin{example} \label{ex_capital_l_p_spaces_as_projectively_non_hilbert_spaces}
	Let $(\Omega, \Sigma, \mu)$ be a measure space and let $1 \le p < \infty$. If $p \not= 2$, then the real Banach space $L^p := L^p(\Omega, \Sigma, \mu; \bbR)$ is projectively non-Hilbert. \par
	To see this, let $V \subset L^p$ be a two-dimensional subspace and assume that $P$ is a contractive linear projection on $L^p$ with range $P L^p = V$. Then $V$ is isometrically isomorphic to $L^p(\tilde \Omega, \tilde \Sigma, \tilde \mu; \bbR)$ for another measure space $(\tilde \Omega, \tilde \Sigma, \tilde \mu)$, see \cite[Theorem 6]{Tzafriri1969}. However, every two-dimensional real-valued $L^p$-space is isometrically isomorphic to the space $(\bbR^2, \|\cdot\|_p)$. Since $p \not= 2$, it follows that $V$ cannot be a Hilbert space.
\end{example}

We saw in Example \ref{example_cpaital_l_p_not_extremly_non_hilbert_shift_sg} that the space $L^p(\bbT;\bbR)$ on the complex unit circle $\bbT$ isometrically contains a two-dimensional Hilbert space $V$. The above Example \ref{ex_capital_l_p_spaces_as_projectively_non_hilbert_spaces} shows that this subspace cannot admit a contractive projection (although it of course admits a bounded projection since it is finite dimensional). To obtain further examples of projectively non-Hilbert spaces, we now show how they behave with respect to duality.

\begin{proposition} \label{prop_dual_spaces_projectively_non_hilbert}
	Let $X$ be a real Banach space. If its dual space $X'$ is projectively non-Hilbert, then so is $X$ itself.
\end{proposition}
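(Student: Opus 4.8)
The plan is to prove the contrapositive: assuming that $X$ is \emph{not} projectively non-Hilbert, I would exhibit a two-dimensional Hilbert space inside $X'$ admitting a contractive projection, so that $X'$ fails to be projectively non-Hilbert as well. Thus suppose there is a two-dimensional Hilbert space $V \subset X$ together with a contractive projection $P \in \calL(X)$ satisfying $PX = V$. The obvious candidate for a contractive projection on the dual is the adjoint $P'$: it is idempotent, since $(P')^2 = (P^2)' = P'$, and contractive, since $\|P'\| = \|P\| \le 1$. It therefore suffices to show that the range of $P'$ is, isometrically, a two-dimensional Hilbert space.

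To this end, write $N := \ker P$, so that $X = V \oplus N$, and let $N^{\circ} := \{x' \in X' : \langle n, x'\rangle = 0 \text{ for all } n \in N\}$ denote the annihilator of $N$. I would first check that $P'X' = N^{\circ}$: on the one hand $\langle n, P'x'\rangle = \langle Pn, x'\rangle = 0$ for $n \in N$; on the other hand, if $x'$ vanishes on $N$, then $\langle x, P'x'\rangle = \langle Px, x'\rangle = \langle x, x'\rangle$ for all $x$, since $x - Px \in N$, so $P'x' = x'$. Next I would consider the restriction map $r: N^{\circ} \to V'$, $r(x') = x'|_V$. This is a linear bijection whose inverse is $\phi \mapsto \phi \circ P$ (where $\phi \circ P$ denotes $x \mapsto \phi(Px)$, which is well defined as $Px \in V$): indeed $\phi \circ P$ annihilates $N$ and restricts to $\phi$ on $V$, since $P$ acts as the identity on $V$, while conversely $(x'|_V) \circ P = x'$ for every $x' \in N^{\circ}$.

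The decisive point --- and the only step genuinely using the contractivity of $P$ --- is that $r$ is \emph{isometric}. The inequality $\|x'|_V\|_{V'} \le \|x'\|_{X'}$ is immediate. For the reverse, I would use the formula for $r^{-1}$: since $x' = (x'|_V) \circ P$ for $x' \in N^{\circ}$, we obtain $\|x'\|_{X'} = \|(x'|_V) \circ P\|_{X'} \le \|x'|_V\|_{V'}\, \|P\| \le \|x'|_V\|_{V'}$. Hence $N^{\circ} = P'X'$ is isometrically isomorphic to $V'$. Since $V$ is a two-dimensional Hilbert space, its dual $V'$ is one as well, so $P'$ is a contractive projection on $X'$ whose range is a two-dimensional Hilbert space; consequently $X'$ is not projectively non-Hilbert, which completes the contrapositive. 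The main obstacle is exactly the isometry of $r$; the remaining assertions are standard facts about adjoints and annihilators in Banach-space duality.
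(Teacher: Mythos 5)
Your proof is correct and follows essentially the same route as the paper: pass to the adjoint projection $P'$ and show that the restriction map from its range onto $V'=(PX)'$ is an isometric isomorphism, the contractivity of $P$ being exactly what makes it isometric. The only cosmetic difference is that you identify $P'X'$ with the annihilator of $\ker P$ and exhibit the inverse explicitly as $\phi \mapsto \phi \circ P$, whereas the paper's Lemma \ref{lem_projections_and_duality} gets surjectivity from Hahn--Banach and the lower norm bound from an $\varepsilon$-argument; both yield the same conclusion.
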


This proposition shows that projectively non-Hilbert spaces behave better with respect to duality than extremely non-Hilbert spaces do (compare the comments before Corollary \ref{cor_point_spectrum_for_sg_on_spaces_with_extremely_non_hilbert_dual_spaces}). The proof of Proposition \ref{prop_dual_spaces_projectively_non_hilbert} relies on the following elementary observation:

\begin{lemma} \label{lem_projections_and_duality}
	Let $P$ be a bounded linear projection on a (real or complex) Banach space $X$. Then the mapping
	\begin{align*}
		i: \quad P' \, X' & \; \to \; (PX)' \text{,} \\
		x' & \; \mapsto \; x'|_{PX}
	\end{align*}
	is a contractive Banach space isomorphism. If $P$ is contractive, then $i$ is even isometric.
	\begin{proof}
		Clearly, $i$ is linear and contractive. If $x' \in P'X'$ and $i(x') = x'|_{PX} = 0$, then we obtain for every $x \in X$ that
		\begin{align*}
			\langle x', x \rangle = \langle P'x', x \rangle = \langle x', Px \rangle = 0 \text{,} 
		\end{align*}
		hence $x' = 0$. This shows that $i$ is injective. Surjectivity of $i$ follows from the Hahn-Banach Theorem: A given functional $\tilde x' \in (PX)'$ can be extended to a functional $x' \in X'$ and we obtain for each $x \in PX$ that
		\begin{align*}
			\langle P'x', x \rangle = \langle x', Px \rangle = \langle x', x \rangle = \langle \tilde x', x \rangle \text{.}
		\end{align*}
		Thus, $i(P'x') = (P'x')|_{PX} = \tilde x'$, i.e.\ $i$ is surjective and hence a Banach space isomorphism. \par
		Finally, assume that the projection $P$ is a contraction and let $x' \in P'X'$. For each $\varepsilon > 0$, we can find a normalized vector $x \in X$ such that $|\langle x', x \rangle| \ge ||x'|| - \varepsilon$. Since $||Px|| \le ||x|| = 1$, we obtain
		\begin{align*}
			||i(x')|| = || x'|_{PX} || \ge |\langle x'|_{PX}, Px \rangle| = |\langle x', x \rangle| \ge ||x'|| - \varepsilon \text{.}
		\end{align*}
		Thus, $||i(x')|| \ge ||x'||$. Since $i$ is a contraction, we conclude that it is in fact isometric.
	\end{proof}
\end{lemma}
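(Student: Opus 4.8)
The plan is to establish separately the four assertions about $i$ — that it is contractive, injective, surjective, and (when $\|P\| \le 1$) isometric — all resting on the single algebraic fact that every element of $P'X'$ is fixed by $P'$. Indeed, since $P$ is a projection, its adjoint $P'$ is a projection on $X'$, and any element of the range of a projection is fixed by it; so if $x' = P'y'$ then $P'x' = (P')^2 y' = P'y' = x'$. Dualizing yields the identity
\begin{align*}
	\langle x', x \rangle = \langle P'x', x \rangle = \langle x', Px \rangle \qquad (x \in X, \ x' \in P'X'),
\end{align*}
which will be the workhorse throughout.

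Contractivity is immediate: restricting a functional to the subspace $PX$ cannot increase the supremum defining its norm, so $\|x'|_{PX}\| \le \|x'\|$. For injectivity I would suppose $i(x') = x'|_{PX} = 0$; then for any $x \in X$ the vector $Px$ lies in $PX$, so the identity above gives $\langle x', x \rangle = \langle x', Px \rangle = 0$, forcing $x' = 0$. For surjectivity, given $\tilde x' \in (PX)'$, I would extend it by Hahn--Banach to some $x' \in X'$ and verify that $P'x'$ is the desired preimage: for $x \in PX$ one has $Px = x$, whence $\langle P'x', x \rangle = \langle x', Px \rangle = \langle x', x \rangle = \langle \tilde x', x \rangle$, i.e.\ $i(P'x') = \tilde x'$. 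Since $P'X' = \ker(I - P')$ is closed (hence complete) and $(PX)'$ is complete, the continuous bijection $i$ is a topological isomorphism by the Open Mapping Theorem.

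Finally, under the hypothesis $\|P\| \le 1$ I would prove the reverse estimate $\|i(x')\| \ge \|x'\|$. Fixing $x' \in P'X'$ and $\varepsilon > 0$, choose a normalized $x \in X$ with $|\langle x', x \rangle| \ge \|x'\| - \varepsilon$. Contractivity of $P$ gives a test vector $Px \in PX$ with $\|Px\| \le 1$, and testing the restricted functional against it yields $\|x'|_{PX}\| \ge |\langle x', Px \rangle| = |\langle x', x \rangle| \ge \|x'\| - \varepsilon$ by the identity above. Letting $\varepsilon \to 0$ and combining with contractivity gives $\|i(x')\| = \|x'\|$.

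The argument is entirely elementary and presents no genuine obstacle. The one point deserving care is the direction of the norm estimate in the isometry claim: one must test against $Px$ rather than $x$, and it is exactly the contractivity of $P$, guaranteeing $\|Px\| \le 1$, that legitimizes this test vector. Dropping that hypothesis leaves only the topological isomorphism.
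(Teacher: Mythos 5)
Your proposal is correct and follows essentially the same route as the paper's own proof: the identity $\langle x', x \rangle = \langle P'x', x \rangle = \langle x', Px \rangle$ for $x' \in P'X'$ drives injectivity, surjectivity is obtained by Hahn--Banach extension followed by applying $P'$, and the isometry estimate tests the restricted functional against $Px$ using $\|Px\| \le 1$. Your only addition is to make explicit what the paper leaves tacit, namely that $P'X' = \ker(I - P')$ is closed so that the Open Mapping Theorem upgrades the continuous bijection to a topological isomorphism --- a worthwhile clarification, but not a different argument.
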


\begin{proof}[Proof of Proposition \ref{prop_dual_spaces_projectively_non_hilbert}]
	If $X$ is not projectively non-Hilbert, then there is a two-dimensional Hilbert space $V \subset X$ and a contractive projection $P$ from $X$ onto $V$. The adjoint operator $P'$ is also a contractive projection and by Lemma \ref{lem_projections_and_duality}, its range $P'X'$ is isometrically isomorphic to $(PX)' = V'$ which is a two-dimensional Hilbert space. Thus, $X'$ is not projectively non-Hilbert, either.
\end{proof}

From Proposition \ref{prop_dual_spaces_projectively_non_hilbert} we obtain another class of examples of projectively non-Hilbert spaces:

\begin{example}
	Let $L$ be a locally compact Hausdorff space and let $C_0(L;\bbR)$ be the space of real-valued continuous functions on $L$ which vanish at infinity, endowed with the supremum norm. Then $C_0(L; \bbR)$ is projectively non-Hilbert. \par
	To see this, note that the dual space $C_0(L;\bbR)'$ is isometrically isomorphic to $L^1(\Omega, \Sigma, \mu; \bbR)$ for some measure space $(\Omega, \Sigma, \mu)$. This well-known result follows from Kakutani's Representation Theorem for abstract $L$-spaces, see \cite[Proposition 1.4.7 (i) and Theorem 2.7.1]{Meyer-Nieberg1991}. Since we know from Example \ref{ex_capital_l_p_spaces_as_projectively_non_hilbert_spaces} that $L^1(\Omega, \Sigma, \mu; \bbR)$ is projectively non-Hilbert, Proposition \ref{prop_dual_spaces_projectively_non_hilbert} implies that $C_0(L;\bbR)$ is projectively non-Hilbert as well.
\end{example}

If $K$ is a compact Hausdorff space, then we of course have $C_0(K;\bbR) = C(K;\bbR)$, and so the space $C(K;\bbR)$ of continuous, real-valued functions on $K$ is projectively non-Hilbert. This also implies that the space $L^\infty(\Omega, \Sigma, \mu; \bbR)$, for an arbitrary measure space $(\Omega, \Sigma, \mu)$, is projectively non-Hilbert. In fact, each $L^\infty$-space is isometrically isomorphic to a $C(K;\bbR)$-space for some compact Hausdorff space $K$, due to Kakutani's Representation Theorem for abstract $M$-spaces (see \cite[Theorem 2.1.3]{Meyer-Nieberg1991}).

Let us close this subsection with some further examples of projectively non-Hilbert spaces:

\begin{example}
	(a) As explained in \cite[the last paragraph of Remark~3.5]{Randrianantoanina2004}, Orlicz sequences spaces which satisfy certain technical conditions are projectively non-Hilbert. \par 
	(b) If $p,q \in (1,\infty) \setminus \{2\}$, then the vector-valued sequence space $l^p(l^q)$, more precisely being given by $l^p(l^q) := l^p(\bbN; l^q(\bbN;\bbR))$, is projectively non-Hilbert. This can easily be derived from \cite[Theorem~5.1]{Lemmens2007}.
\end{example}

\subsection{$C_0$-semigroups on projectively non-Hilbert spaces}

We are now going to study the point spectrum of contractive $C_0$-semigroups on projectively non-Hilbert spaces. Before stating the main result of this section, we recall that a $C_0$-semigroup $(e^{tA})_{t\ge 0}$ on a (real or complex) Banach space $X$ is called \emph{weakly almost periodic}, if for each $x \in X$, the orbit $\{e^{tA}x: t \ge 0\}$ is weakly pre-compact in $X$. 

\begin{theorem} \label{theorem_point_spectrum_for_sg_on_projectively_non_hilbert_spaces}
	Let $X$ be real Banach space which is projectively non-Hilbert and let $(e^{tA})_{t \ge 0}$ be a weakly almost periodic $C_0$-semigroup on $X$. If $(e^{tA}x)_{t\ge 0}$ is weakly asymptotically contractive and if $\sigma_{\operatorname{pnt}}(A) \cap i \bbR$ is bounded, then we actually have $\sigma_{\operatorname{pnt}}(A) \cap i \bbR \subset \{0\}$.
\end{theorem}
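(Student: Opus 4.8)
The plan is to argue by contradiction: assuming some $i\alpha_0 \in \sigma_{\operatorname{pnt}}(A) \cap i\bbR$ with $\alpha_0 \neq 0$, I will manufacture a two-dimensional Hilbert space $V \subset X$ together with a contractive linear projection of $X$ onto $V$, which is impossible since $X$ is projectively non-Hilbert (Definition \ref{def_projectively_non_hilbert_space}). Fix an eigenvector $z_0 = x_0 + iy_0$ of $A_\bbC$ for $i\alpha_0$. Exactly as in the proof of Theorem \ref{theorem_point_spectrum_for_sg_on_extremely_non_hilbert_spaces}, the orbit $t \mapsto e^{tA}x_0$ is periodic, so weak asymptotic contractivity forces $\|e^{tA}x_0\| = 1$ for all $t$ and $\operatorname{span}_\bbR\{x_0,y_0\}$ is isometrically a two-dimensional Hilbert space on which $(e^{tA})_{t\ge0}$ acts by rotation. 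The difficulty, compared to the extremely non-Hilbert case, is that being a Hilbert subspace is no longer enough: I must produce a \emph{contractive} projection onto such a subspace, and this is where the two extra hypotheses enter.

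First I would exploit weak almost periodicity through the de~Leeuw--Glicksberg theory. Since each orbit is weakly relatively compact, the closure $\mathcal{S} := \overline{\{e^{tA} : t \ge 0\}}$ in the weak operator topology is a compact, abelian, semitopological semigroup whose minimal ideal is a compact group $\mathcal{K}$; its unit $Q$ is a projection onto the reversible part $X_r := QX$, and $\mathcal{G} := \mathcal{K}|_{X_r}$ is a compact abelian group acting on $X_r$. The point is that every element of $\mathcal{K}$ is a weak operator limit of operators $e^{t_i A}$ with $t_i \to \infty$; hence weak asymptotic contractivity yields $|\langle Qx, x'\rangle| \le \|x\|\,\|x'\|$, i.e.\ $\|Q\| \le 1$, and, since $\mathcal{G}$ is a group (so $g^{-1} \in \mathcal{G}$ with $\|g^{-1}\|\le1$), every $g \in \mathcal{G}$ is an isometry of $X_r$. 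The eigenvector $z_0$ is reversible and $\mathcal{G}$ acts on $\bbC z_0$ through a character $\lambda \colon \mathcal{G} \to \bbT$, $g z_0 = \lambda(g) z_0$; because $\{e^{i\alpha_0 t} : t \in \bbR\} = \bbT$, the character $\lambda$ is surjective, so $\mathcal{G}/\mathcal{N} \cong \bbT$ where $\mathcal{N} := \ker \lambda$.

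The decisive use of the boundedness of $\sigma_{\operatorname{pnt}}(A) \cap i\bbR$ comes now. I would choose the eigenvalue so that $|\alpha_0| > \tfrac12 \sup\{|\beta| : i\beta \in \sigma_{\operatorname{pnt}}(A) \cap i\bbR\}$. Averaging over $\mathcal{N}$ with respect to Haar measure, the contractive projection $\Pi_{\mathcal{N}} := \int_{\mathcal{N}} g \, dg$ maps $X_r$ onto the $\mathcal{N}$-fixed subspace, on which the residual group $\mathcal{G}/\mathcal{N} \cong \bbT$ acts; the weights occurring there are exactly the characters $\lambda^k$, and a $\lambda^k$-weight vector is an eigenvector of the generator for $ik\alpha_0$. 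By the choice of $\alpha_0$, no eigenvalue $ik\alpha_0$ with $|k| \ge 2$ can occur, so only the weights $k \in \{0, \pm 1\}$ survive. Writing $U_{s} := e^{sA}|_{X_r}$ and picking $s_0$ with $\alpha_0 s_0 = \pi$, the operator $\tfrac12(\mathrm{id} - U_{s_0})$ acts as $0$ on weight $0$ and as the identity on the weights $\pm 1$; being the average of the two isometries $\mathrm{id}$ and $-U_{s_0}$ (on the finite-frequency, hence norm-periodic, part where $U_{s_0}$ is isometric), it is contractive, and composed with $\Pi_{\mathcal{N}}$ and $Q$ it is a contractive projection of $X$ onto the weight $\pm 1$ eigenspace $F$, which is nonzero since it contains $\operatorname{span}_\bbR\{x_0,y_0\}$.

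Finally I would observe that on $F$ the group acts by the scalar rotations $e^{i\alpha_0 t}$ and that the orbits in $F$ are periodic, so $\|e^{i\theta}\zeta\| = \|\zeta\|$ for all unit scalars; thus $F$, with the complex structure furnished by the eigenspace decomposition, is a genuine complex normed space, and every complex line in it is isometric to $\ell^2_2$. Applying the complex Hahn--Banach theorem I obtain a norming pair $(\zeta_0, \zeta^\ast)$ with $\zeta^\ast$ complex-linear, $\|\zeta^\ast\|=1=\langle\zeta_0,\zeta^\ast\rangle$, and then $\zeta \mapsto \langle \zeta, \zeta^\ast\rangle \zeta_0$ is a contractive (complex-, hence real-linear) projection of $F$ onto the two-dimensional Hilbert space $\bbC \zeta_0$. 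Composing with the previous contractive projection produces a contractive projection of $X$ onto $\bbC\zeta_0$, contradicting that $X$ is projectively non-Hilbert. The main obstacle I expect is the careful justification of the de~Leeuw--Glicksberg structure in this purely real, merely asymptotically contractive setting --- in particular the strong continuity and Haar-integrability needed to form $\Pi_{\mathcal{N}}$, and the passage from weak asymptotic contractivity to genuine isometry of $\mathcal{G}$ on $X_r$; once these are secured, the frequency-halving choice of $\alpha_0$ that annihilates all higher weights is exactly what makes the elementary reflection $\tfrac12(\mathrm{id}-U_{s_0})$ available.
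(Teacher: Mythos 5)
Your strategy is sound and its skeleton matches the paper's: (1) normalise the eigenvalue so that no integer multiple $ik\alpha_0$ with $|k|\ge 2$ lies in the point spectrum (your frequency-halving choice is exactly the paper's rescaling so that $\pm i\in\sigma_{\operatorname{pnt}}(A)\subset i\cdot(-3,3)$); (2) build a \emph{contractive} projection of $X$ onto the real part of $\operatorname{Eig}(i\alpha_0,A_\bbC)\oplus\operatorname{Eig}(-i\alpha_0,A_\bbC)$; (3) equip that real subspace with the complex structure $\lambda\circ(y_1+y_2)=\lambda y_1+\overline\lambda y_2$ and apply Hahn--Banach to get a contractive projection onto a complex line, i.e.\ onto a two-dimensional real Hilbert space --- your final step is literally the paper's Lemma \ref{lem_rotation_sg_with_one_eigenvalue_implies_contr_proj_hilbert_space}. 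Where you genuinely diverge is step (2). The paper observes, via Lemma \ref{lem_eigenvectors_for_square_roots_of_eigenvalues} and the spectral localisation, that $Y_1\oplus Y_2$ is precisely the fixed space of the single operator $-e^{\pi A_\bbC}$, and then obtains the contractive projection in one stroke as the mean ergodic projection of $-e^{\pi A}$ (mean ergodicity following from weak almost periodicity). You instead assemble the projection in three layers --- the de~Leeuw--Glicksberg idempotent $Q$, a Haar average $\Pi_{\mathcal N}$ over the kernel of the character, and the reflection $\tfrac12(\operatorname{id}-U_{s_0})$ (which, once only the weights $0,\pm1$ survive, is itself the mean ergodic projection of $-U_{s_0}$, so the two roads reconverge). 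Your route works, but it is heavier: it needs joint continuity on the minimal ideal, vector-valued Haar integration, the well-definedness of the induced $\bbT$-action on the $\mathcal N$-fixed part (which does hold, since $e^{(2\pi/\alpha_0)A}$ lies in $\mathcal N$ and hence acts trivially there), and a Fej\'er-type argument to see that the $\mathcal N$-fixed space is spanned by the weights $0,\pm1$ --- precisely the points you flag as delicate. The paper's argument circumvents all of this compact-group machinery; indeed it notes that weak almost periodicity can be relaxed to mere mean ergodicity of each operator $-e^{tA}$, a weakening your approach would not deliver. What your version buys in exchange is a more transparent picture of \emph{why} the projection exists (averaging over a compact group of isometries) and the explicit identification of the isometric group action on the reversible part.
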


Before proving Theorem \ref{theorem_point_spectrum_for_sg_on_projectively_non_hilbert_spaces}, let us make a few remarks on the assumptions and consequences of the theorem: The condition on $\sigma_{\operatorname{pnt}}(A) \cap i \bbR$ to be bounded is for example fulfilled if $(e^{tA})_{t \ge 0}$ is eventually norm continuous, since in this case the intersection of the entire spectrum $\sigma(A)$ with every right half plane $\{\lambda \in \bbC: \re \lambda \ge \alpha \}$ (where $\alpha \in \bbR$) is automatically bounded (see \cite[Theorem II.4.18]{Engel2000}). If $(e^{tA})_{t \ge 0}$ is bounded, then the condition that the semigroup be weakly almost periodic is automatically fulfilled if the space $X$ is reflexive. It is also automatically fulfilled if the semigroup is bounded and eventually compact (i.e.\ if $e^{tA}$ is compact for sufficiently large $t$) or if it is bounded and its generator has compact resolvent; in the latter two cases the orbits of the semigroup are even pre-compact in the norm topology on $X$, see \cite[Corollary V.2.15]{Engel2000}. \par

Moreover, for eventually compact semigroups we even obtain the following convergence result as a corollary:

\begin{corollary} \label{cor_convergence_for_compact_resolvent_on_projectively_non_hilbert_spaces}
	Let $X$ be a real Banach space which is projectively non-Hilbert and let $(e^{tA})_{t \ge 0}$ be an eventually compact $C_0$-semigroup on $X$ which is weakly asymptotically contractive. Then $e^{tA}$ converges with respect to the operator norm as $t \to \infty$.
	\begin{proof}
		Let $A_\bbC$ be the complex extension of $A$ to a complexification $X_\bbC$ of $X$. Since the semigroup $(e^{tA_\bbC})_{t \ge 0}$ is eventually compact, it is weakly almost periodic (in fact, its orbits are even pre-compact in norm, see \cite[Corollary V.2.15 (ii)]{Engel2000}). Moreover, as an eventually compact semigroup, $(e^{tA_\bbC})_{t \ge 0}$ is eventually norm-continuous (cf.\ \cite[Lemma II.4.22]{Engel2000}) and therefore, the intersection of $\sigma(A)$ with any right half plane $\{z \in \bbC: \re z \ge \alpha\}$, $\alpha \in \bbR$, is bounded. We thus can apply Theorem \ref{theorem_point_spectrum_for_sg_on_projectively_non_hilbert_spaces} to conclude that $\sigma_{\operatorname{pnt}}(A) \cap i\bbR \subset \{0\}$. Since the spectrum of $A_\bbC$ consists only of eigenvalues (see \cite[Corollary V.3.2 (i), and assertion (ii) at the end of paragraph IV.1.17]{Engel2000}), this implies that $\sigma(A) \cap i\bbR \subset \{0\}$. \par
		Since the semigroup $(e^{tA_\bbC})_{t \ge 0}$ is eventually compact, it admits the representation stated in \cite[Corollary~V.3.2, formula~(3.1)]{Engel2000}. Using that $\sigma(A) \cap i\bbR \subset \{0\}$ and that the semigroup is bounded, it follows that $e^{tA_\bbC}$ converges with respect to the operator norm as $t \to \infty$.
	\end{proof}
\end{corollary}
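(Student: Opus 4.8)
The plan is to reduce the operator-norm convergence to the point-spectrum result Theorem \ref{theorem_point_spectrum_for_sg_on_projectively_non_hilbert_spaces} and then to exploit the fine spectral structure of eventually compact semigroups. First I would fix a complexification $X_\bbC$ of $X$ and work with the complex extension $A_\bbC$ together with the extended semigroup $(e^{tA_\bbC})_{t\ge0}$; since operator-norm convergence is a complexification-independent notion and $\|e^{tA}\| = \|(e^{tA})_\bbC\| $ need not hold but the \emph{limit} does not depend on the chosen complexification, it suffices to prove that $e^{tA_\bbC}$ converges in operator norm. Note that weak asymptotic contractivity forces the semigroup to be bounded (as recorded right after Definition \ref{def_asymptotic_contractivity}), so $s(A)\le 0$ and $\sigma(A)\subset\{z\in\bbC:\re z\le 0\}$.

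Next I would check that the hypotheses of Theorem \ref{theorem_point_spectrum_for_sg_on_projectively_non_hilbert_spaces} are met. Eventual compactness supplies two ingredients: the orbits of the semigroup are pre-compact in norm (see \cite[Corollary V.2.15 (ii)]{Engel2000}), so the semigroup is weakly almost periodic; and the semigroup is eventually norm continuous (see \cite[Lemma II.4.22]{Engel2000}), so the intersection of $\sigma(A)$ with any right half plane $\{z\in\bbC:\re z\ge\alpha\}$ is bounded, in particular $\sigma_{\operatorname{pnt}}(A)\cap i\bbR$ is bounded. Theorem \ref{theorem_point_spectrum_for_sg_on_projectively_non_hilbert_spaces} then yields $\sigma_{\operatorname{pnt}}(A)\cap i\bbR\subset\{0\}$. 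Because the spectrum of the generator of an eventually compact semigroup consists entirely of isolated eigenvalues (poles of the resolvent; see \cite[Corollary V.3.2]{Engel2000}), this upgrades to the full statement $\sigma(A)\cap i\bbR\subset\{0\}$.

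The final and most delicate step is to turn this spectral information into operator-norm convergence through the spectral representation of eventually compact semigroups. Eventual norm continuity makes $\sigma(A)\cap\{z\in\bbC:\re z\ge\alpha\}$ finite for every $\alpha$; letting $\alpha\uparrow 0$ and using $\sigma(A)\cap i\bbR\subset\{0\}$, the finiteness produces a \emph{spectral gap}, i.e.\ an $\varepsilon>0$ with $\sigma(A)\cap\{z\in\bbC:\re z\ge-\varepsilon\}\subset\{0\}$. Feeding this gap into the decomposition of \cite[Corollary~V.3.2, formula~(3.1)]{Engel2000} splits $e^{tA_\bbC}$ into a term attached to the eigenvalue $0$, which a priori has the shape $q(t)P_0$ for a finite-rank spectral projection $P_0$ and a polynomial $q$, plus a remainder bounded by $Me^{-\varepsilon t/2}$. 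The hard part is to exclude genuine polynomial growth at $0$: boundedness of the semigroup forces $q$ to be constant (equivalently, $0$ is a semisimple eigenvalue and the nilpotent part of $A_\bbC$ on the generalized eigenspace at $0$ vanishes), so that the surviving term is exactly $P_0$. Consequently $e^{tA_\bbC}=P_0+R(t)$ with $\|R(t)\|\to 0$, whence $e^{tA_\bbC}\to P_0$ in operator norm (with $P_0=0$ in case $0\notin\sigma(A)$), which is the claimed convergence.
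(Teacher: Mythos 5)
Your proposal is correct and takes essentially the same route as the paper's own proof: you verify the hypotheses of Theorem \ref{theorem_point_spectrum_for_sg_on_projectively_non_hilbert_spaces} via the same two facts from \cite{Engel2000} (pre-compact orbits and eventual norm continuity of eventually compact semigroups), upgrade $\sigma_{\operatorname{pnt}}(A)\cap i\bbR\subset\{0\}$ to $\sigma(A)\cap i\bbR\subset\{0\}$ using that the spectrum consists only of eigenvalues, and conclude from the representation in \cite[Corollary~V.3.2, formula~(3.1)]{Engel2000} together with boundedness of the semigroup. The only difference is that you make explicit the final step that the paper leaves implicit (finiteness of the peripheral spectrum yielding a spectral gap, and boundedness forcing $0$ to be semisimple so that the term at $0$ is exactly the spectral projection $P_0$ rather than an operator-valued polynomial $e^{tN_0}P_0$), which is a faithful unpacking, not a different argument.
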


Next, we want to prove Theorem \ref{theorem_point_spectrum_for_sg_on_projectively_non_hilbert_spaces}. Since we are going to reuse several parts of the proof in Section \ref{section_peripheral_spectrum_in_capitel_l_p}, we extract those parts into two lemmas. The first lemma is based on an idea used by Lyubich in the proof of \cite[Theorem 1]{Lyubich1970}.

\begin{lemma} \label{lem_rotation_sg_with_one_eigenvalue_implies_contr_proj_hilbert_space}
	Let $X_\bbC$ be a complexification of a real Banach space $X$ and let $(e^{tA})_{t \ge 0}$ be a $C_0$-semigroup of isometries on $X$. Furthermore, suppose that we have $\sigma_{\operatorname{pnt}}(A) = \{i,-i\}$ and that $X_\bbC = Y_1 \oplus Y_2$, where $Y_1 := \operatorname{Eig}(i,A_\bbC)$ and $Y_2 := \operatorname{Eig}(-i,A_\bbC)$. Then $X$ cannot be projectively non-Hilbert.
	\begin{proof}
		We define a binary operation $\circ: \bbC \times X_\bbC \to X_\bbC$ by
		\begin{align*}
			\lambda \circ (y_1 + y_2) = \lambda y_1 + \overline{\lambda} y_2 \quad \text{for all} \quad y_1 \in Y_1, \; y_2 \in Y_2 \text{.}
		\end{align*}
		The mapping $\circ$ is continuous and moreover it is easy to check that $(X_\bbC, +, \circ)$ is a complex vector space again. For each $t \ge 0$, each $r \ge 0$ and each $x = y_1 + y_2 \in X_\bbC$ (where $y_1 \in Y_1$, $y_2 \in Y_2$) we have
		\begin{align*}
			(re^{it}) \circ x = r e^{it}y_1 + r e^{-it}y_2 = r \cdot \big( e^{tA_\bbC} y_1 + e^{tA_\bbC} y_2 \big) = r e^{tA_\bbC} x \text{.}
		\end{align*}
		This implies that $X$ is invariant under the \emph{complex} multiplication $\circ$, i.e.\ $X$ is a \emph{complex} vector subspace of $(X_\bbC,+,\circ)$. \par
		Moreover, we have for each $t \ge 0$, each $r \ge 0$ and each $x \in X$ that $||(re^{it}) \circ x|| = ||re^{tA_\bbC}x|| = ||re^{tA}x|| = r \, ||x||$ since $e^{tA}$ is isometric. Hence, $||\lambda \circ x|| = |\lambda| \, ||x||$ for each $\lambda \in \bbC$ and each $x \in X$. Therefore, the norm $||\cdot||$ on our real Banach space $X$ is also a norm on the complex vector space $(X,+,\circ)$. \par 
		Now, choose an element $x_0 \in X$ such that $||x_0|| = 1$. The one-dimensional complex subspace $\bbC \circ x_0 \subset X$ admits a contractive $\bbC$-linear (with respect to $\circ$) projection $P: X \to \bbC \circ x_0$ due to the Hahn-Banach Theorem. Moreover, $P$ is also a linear mapping over the real field with respect to the original multiplication on $X$ since we have $r\circ x = rx$ for each $r \in \bbR$ and each vector $x \in X$. Also note that $(\bbC \circ x_0, +, \circ)$ is two-dimensional over the real field, and since the restriction of $\circ$ to real scalars coincides with the original multiplication on $X$, $\bbC \circ x_0$ is a two dimensional real vector subspace of the original space $X$. \par
		Finally, we show that $\bbC \circ x_0$ is a real Hilbert space. Since $||x_0|| = 1$, the mapping $\psi: \bbC \to  \bbC \circ x_0$, $\lambda \mapsto \lambda \circ x_0$ is a $\bbC$-linear (with respect to $\circ$) and isometric bijection. If we restrict all scalars to the real field, $\psi$ thus becomes an isometric and linear bijection between the two-dimensional normed real spaces $(\bbR^2, ||\cdot||_2)$ and $(\bbC \circ x_0, ||\cdot||)$. This proves the lemma.
	\end{proof}
\end{lemma}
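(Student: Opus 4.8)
The plan is to transport the rotational action of the semigroup into a genuine complex scalar multiplication on $X_\bbC$ built from the eigenspace decomposition, and then to use the isometry hypothesis to turn $X$ itself into a complex normed space whose one-dimensional subspaces are two-dimensional real Hilbert spaces carrying a contractive projection.

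First I would define a new scalar multiplication $\circ \colon \bbC \times X_\bbC \to X_\bbC$ by $\lambda \circ (y_1 + y_2) := \lambda y_1 + \overline{\lambda}\, y_2$ for $y_1 \in Y_1$ and $y_2 \in Y_2$; the direct-sum hypothesis makes this well-defined, and since $Y_1$ and $Y_2$ are closed the associated projections are bounded (closed graph theorem). A routine verification shows that $(X_\bbC, +, \circ)$ is again a complex vector space (associativity of $\circ$ uses that $Y_1$ and $Y_2$ are ordinary complex subspaces). The linchpin is the identity, valid for $x = y_1 + y_2$, $r \ge 0$ and $t \in \bbR$, namely $(re^{it}) \circ x = re^{it} y_1 + re^{-it} y_2 = r\bigl(e^{tA_\bbC} y_1 + e^{tA_\bbC} y_2\bigr) = r\, e^{tA_\bbC} x$, obtained from $A_\bbC y_1 = i y_1$ and $A_\bbC y_2 = -i y_2$. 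Writing an arbitrary complex scalar as $re^{it}$ and using that $e^{tA_\bbC}$ restricts to $e^{tA}$ on $X$, this shows $\bbC \circ x \subset X$ for every $x \in X$, i.e.\ $X$ is a $\circ$-complex subspace of $X_\bbC$.

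Next I would transfer the isometry property. For $x \in X$ the same identity gives $\|(re^{it}) \circ x\| = \|r\, e^{tA} x\| = r\,\|x\|$, since each $e^{tA}$ is an isometry, and hence $\|\lambda \circ x\| = |\lambda|\,\|x\|$ for all $\lambda \in \bbC$. Thus the original norm is absolutely homogeneous for $\circ$, so $(X, +, \circ, \|\cdot\|)$ is a complex normed space. Fixing $x_0 \in X$ with $\|x_0\| = 1$, the functional $\lambda \circ x_0 \mapsto \lambda$ on the one-dimensional $\circ$-subspace $\bbC \circ x_0$ has norm one, so the Hahn-Banach theorem (in its complex form, applied to $(X,+,\circ)$) extends it to a norm-one $\circ$-linear functional on $X$ and thereby yields a contractive $\circ$-linear projection $P$ of $X$ onto $\bbC \circ x_0$. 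Because $\circ$ agrees with the original multiplication on real scalars, $P$ is in particular a contractive projection in the original real Banach space $X$, and $\bbC \circ x_0$ is a two-dimensional real subspace of $X$.

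Finally I would identify $\bbC \circ x_0$ as a Hilbert space: the map $\psi \colon \bbC \to \bbC \circ x_0$, $\lambda \mapsto \lambda \circ x_0$, is a $\circ$-linear isometric bijection by the homogeneity just established, so after restricting scalars to $\bbR$ it becomes an isometric real-linear bijection from $(\bbR^2, \|\cdot\|_2)$ onto $(\bbC \circ x_0, \|\cdot\|)$; hence $\bbC \circ x_0$ is a two-dimensional Hilbert space admitting a contractive projection in $X$, contradicting that $X$ is projectively non-Hilbert. The step I expect to require the most care is not any single estimate but the structural bookkeeping around $\circ$: one must simultaneously check that $\circ$ genuinely makes $X_\bbC$ a complex vector space and that $X$ stays invariant under it, and the key observation is that the single identity $(re^{it}) \circ x = r\, e^{tA} x$ is what both keeps the action inside $X$ and imports the isometry as norm homogeneity.
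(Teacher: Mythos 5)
Your proposal is correct and follows essentially the same route as the paper's own proof: the auxiliary complex multiplication $\circ$ twisted by conjugation on $Y_2$, the key identity $(re^{it}) \circ x = r\,e^{tA_\bbC}x$ to keep $X$ invariant and import the isometry as absolute homogeneity, the Hahn--Banach contractive projection onto $\bbC \circ x_0$, and the identification of that subspace with $(\bbR^2,\|\cdot\|_2)$. No substantive differences to report.
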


The next lemma is an elementary fact from linear algebra.

\begin{lemma} \label{lem_eigenvectors_for_square_roots_of_eigenvalues}
	Let $T$ be a linear operator on a complex vector space $X$ and let $\lambda \in \bbC \setminus \{0\}$. Then $\operatorname{Eig} (\lambda, T^2) = \operatorname{Eig} (\mu, T) \oplus \operatorname{Eig} (-\mu, T)$, where $\mu$ and $-\mu$ denote the complex square roots of $\lambda$.
	\begin{proof}
		First note that we have $\operatorname{Eig} (\mu, T) \cap \operatorname{Eig} (-\mu, T) = \{0\}$ since $-\mu \not= \mu$; hence, the sum is indeed direct. \par 
		The inclusion ``$\supset$'' is obvious. To show the converse inclusion, let $x \in \ker (\lambda - T^2)$; we may assume $x \not= 0$. Since $T^2x = \lambda x$, the linear span $V = \operatorname{span}_\bbC \{ x, Tx \}$ is a $T$-invariant vector subspace of $X$. As $T^2x = \lambda x$ and $T^2 \, Tx = \lambda Tx$, we have $(T|_V)^2 = T^2|_V = \lambda\id_V$. Since $\lambda \not= 0$, the operator $T|_V$ cannot be similar to a non-trivial Jordan block; this is obvious if $\dim V = 1$ and it follows from $(T|_V)^2 = \lambda \id_V$ and a short matrix computation if $\dim V = 2$. Hence $V$ contains a basis of eigenvectors of $T|_V$. From the spectral mapping theorem we know that $\sigma(T|_V) \subset \{-\mu,\mu\}$ and thus $V = \operatorname{span}_\bbC \{ x, Tx \}$ is spanned by eigenvectors of $T$ with corresponding eigenvalues $-\mu$ and/or $\mu$. In particular, $x$ is a linear combination of such eigenvectors.
	\end{proof}
\end{lemma}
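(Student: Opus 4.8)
The plan is to prove the two inclusions separately, after settling directness as a preliminary observation. First I would note that, since $\lambda \neq 0$, its square roots satisfy $\mu \neq 0$ and hence $\mu \neq -\mu$; therefore $\operatorname{Eig}(\mu,T)$ and $\operatorname{Eig}(-\mu,T)$ intersect only in $\{0\}$ (eigenvectors for distinct eigenvalues are linearly independent), so the sum on the right-hand side is automatically direct. The inclusion ``$\supset$'' is then immediate: if $Tx = \mu x$ or $Tx = -\mu x$, then $T^2 x = \mu^2 x = \lambda x$, so $x \in \operatorname{Eig}(\lambda, T^2)$.

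The substance of the statement is the reverse inclusion ``$\subset$'', and here I would avoid any appeal to Jordan normal form or to the spectral mapping theorem by writing down the eigenvector decomposition explicitly. Given $x \in \operatorname{Eig}(\lambda, T^2)$, set
\begin{align*}
    x_+ := \tfrac{1}{2}\bigl(x + \tfrac{1}{\mu}\,Tx\bigr) \qquad \text{and} \qquad x_- := \tfrac{1}{2}\bigl(x - \tfrac{1}{\mu}\,Tx\bigr),
\end{align*}
which is legitimate precisely because $\mu \neq 0$. Obviously $x = x_+ + x_-$, and a one-line computation that replaces $T^2 x$ by $\mu^2 x$ shows $Tx_+ = \mu x_+$ and $Tx_- = -\mu x_-$. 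Hence $x_+ \in \operatorname{Eig}(\mu, T)$ and $x_- \in \operatorname{Eig}(-\mu, T)$, which yields $x \in \operatorname{Eig}(\mu,T) \oplus \operatorname{Eig}(-\mu,T)$ and finishes the argument.

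I do not anticipate any genuine obstacle: the only point requiring care is that the formulas for $x_\pm$ degenerate when $\mu = 0$, but this case is excluded by the hypothesis $\lambda \neq 0$, which is exactly where that assumption is used. An alternative would be to exploit the factorisation $T^2 - \lambda\,\id = (T - \mu\,\id)(T + \mu\,\id)$ and argue with ranges and kernels, but the explicit projection above is shorter and entirely self-contained, so I would prefer it.
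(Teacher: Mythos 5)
Your proof is correct, but it takes a genuinely different route from the paper. The paper restricts $T$ to the invariant subspace $V = \operatorname{span}_\bbC\{x,Tx\}$, observes that $(T|_V)^2 = \lambda \id_V$ forces $T|_V$ to be diagonalisable (a non-trivial Jordan block is excluded because $\lambda \neq 0$), and then invokes the spectral mapping theorem to place the eigenvalues of $T|_V$ in $\{\mu,-\mu\}$. You instead write down the spectral projections explicitly, $x_\pm = \tfrac{1}{2}\bigl(x \pm \tfrac{1}{\mu}Tx\bigr)$, and verify $Tx_\pm = \pm\mu\, x_\pm$ by a one-line substitution of $T^2x = \mu^2 x$. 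Your computation checks out, the decomposition $x = x_+ + x_-$ is immediate, and it is harmless that one of $x_+$, $x_-$ may vanish (the eigenspaces are allowed to be trivial, as the paper itself remarks after the lemma). What your approach buys is brevity and complete elementarity: it needs no Jordan-form reasoning and no spectral mapping theorem, and it works verbatim on an arbitrary complex vector space, which is exactly the generality the lemma is stated in. The hypothesis $\lambda \neq 0$ enters your argument only through the division by $\mu$, whereas in the paper it enters through the exclusion of the nilpotent Jordan block; both proofs correctly break down at $\lambda = 0$, consistent with the counterexample the paper gives there.
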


Of course it may happen that one of the spaces $\operatorname{Eig} (\mu, T)$ and $\operatorname{Eig} (-\mu, T)$ in the above lemma equals $\{0\}$ while the other does not. Moreover, note that Lemma~\ref{lem_eigenvectors_for_square_roots_of_eigenvalues} is false for $\lambda = 0$; of course, the sum is no longer direct in this case, but the assertions even fails if we do not require the sum to be direct. Simply choose $T$ as a two-dimensional Jordan block with eigenvalue $0$ to see this. \par
We are now ready to prove Theorem \ref{theorem_point_spectrum_for_sg_on_projectively_non_hilbert_spaces}. For the proof, recall that an operator $T$ on a Banach space $X$ is called \emph{mean ergodic} if the sequence of \emph{Ces{\`a}ro means} $\frac{1}{n}\sum_{k=0}^{n-1} T^k$ is strongly convergent as $n \to \infty$. In this case it is well-known (and easy to see) that the limit is a projection $P \in \calL(X)$ whose range coincides with the fixed space $\ker(1-T)$ of $T$; $P$ is called the \emph{mean ergodic projection} of $T$.

\begin{proof}[Proof of Theorem \ref{theorem_point_spectrum_for_sg_on_projectively_non_hilbert_spaces}]
	(a) Let $X_\bbC$ be a complexification of $X$ and let $(e^{tA_\bbC})_{t \ge 0}$ be the complex extension of the semigroup $(e^{tA})_{t \ge 0}$. Assume for a contradiction that $\sigma_{\operatorname{pnt}}(A) \cap i\bbR \not \subset \{0\}$. Replacing $A$ by $c A$ for some $c > 0$, we may assume that $i,-i \in \sigma_{\operatorname{pnt}}(A)$ and that $\sigma_{\operatorname{pnt}}(A) \subset i \cdot (-3, 3)$. \par
	(b) We proceed with some observations about several eigenspaces that will be used throughout the proof. For each $t \in (0,\frac{\pi}{2}]$ we have that
	\begin{align}
		\operatorname{Eig}(e^{it}, e^{tA_\bbC}) \; \; & = \overline{\operatorname{span}}_{n \in \bbZ}\operatorname{Eig} ( \; \, i + \frac{2\pi in}{t},A_\bbC) \; = \operatorname{Eig}(\, i, A_\bbC) \; \; =: Y_1 \label{form_time_independent_eigenspace_plus}  \\
		\operatorname{Eig}(e^{-it}, e^{tA_\bbC}) & = \overline{\operatorname{span}}_{n \in \bbZ}\operatorname{Eig} (-i + \frac{2\pi in}{t},A_\bbC) = \operatorname{Eig}(-i, A_\bbC) =: Y_2 \text{.}  \label{form_time_independent_eigenspace_minus}
	\end{align}
	The equalities on the left follow from a general relationship between the eigenspaces of $A_\bbC$ and $e^{tA_\bbC}$ (see \cite[Corollary IV.3.8]{Engel2000}) and hold for all $t > 0$; the equalities on the right hold for all $t \in (0,\frac{\pi}{2}]$ due to the condition $\sigma_{\operatorname{pnt}}(A) \subset i \cdot (-3, 3)$. \par
	While the equalities (\ref{form_time_independent_eigenspace_plus}) and (\ref{form_time_independent_eigenspace_minus}) hold only for $t \in (0, \frac{\pi}{2}]$, we have for $t \ge 0$ at least the inclusions
	\begin{align}
		Y_1 \subset \operatorname{Eig}(e^{it}, e^{tA_\bbC}) \quad \text{and} \quad Y_2 \subset \operatorname{Eig}(e^{-it}, e^{tA_\bbC}). \label{form_inclusions_for_eigenspaces_for_large_t}
	\end{align}
	For the time $t = \pi$ we can make a more precise observation: In fact, we have
	\begin{align*}
		\operatorname{Eig}(-1, e^{\pi A_\bbC}) = \operatorname{Eig}(e^{i\frac{\pi}{2}}, e^{\frac{\pi}{2} A_\bbC}) \oplus \operatorname{Eig}(e^{-i\frac{\pi}{2}}, e^{\frac{\pi}{2} A_\bbC}) = Y_1 \oplus Y_2 =: Z_\bbC
	\end{align*}
	The first of these equalities follows if we apply Lemma \ref{lem_eigenvectors_for_square_roots_of_eigenvalues} to the operator $T = e^{\frac{\pi}{2} A_\bbC}$ and to the complex number $\lambda = -1$; the second equality follows from (\ref{form_time_independent_eigenspace_plus}) and (\ref{form_time_independent_eigenspace_minus}). \par
	
	(c) Let us analyse the space $Z_\bbC$. Since the operator $-e^{\pi A_\bbC}$ is almost weakly periodic (meaning that the orbits of its powers are weakly pre-compact in $X$), it is mean ergodic \cite[Proposition~1.1.19]{Emelyanov2007}. Hence the Ces{\`a}ro means $\frac{1}{n}\sum_{k=0}^{n-1} (-e^{\pi A_\bbC})^k$ converge to a projection $P_\bbC: X_\bbC \to Z_\bbC$, because $Z_\bbC$ is the fixed space of $-e^{\pi A_\bbC}$. Clearly, $P_\bbC$ leaves the real space $X$ invariant and therefore, the range $Z_\bbC = P_\bbC X_\bbC$ is the complexification of the real space $Z := P_\bbC X = X \cap Z_\bbC$. Moreover, the restriction $P_\bbC|_X$ is contractive, since the operator $-e^{\pi A} = -e^{\pi A_\bbC}|_X$ is weakly asymptotically contractive. Thus, the real Banach space $Z$ is the range of the contractive projection $P_\bbC|_X \in \calL(X)$ and is therefore projectively non-Hilbert. \par
	 
	 (d) Finally, we want to apply Lemma \ref{lem_rotation_sg_with_one_eigenvalue_implies_contr_proj_hilbert_space} to the space $Z_\bbC = Y_1 \oplus Y_2$ and to the restricted semigroup $(e^{tA_\bbC}|_{Z_\bbC})_{t \ge 0}$ in order to obtain a contradiction. First, note that $(e^{tA_\bbC}|_{Z_\bbC})_{t \ge 0}$ is indeed a semigroup on $Z_\bbC$, since $e^{tA_\bbC}$ leaves $Z_\bbC$ invariant due to (\ref{form_inclusions_for_eigenspaces_for_large_t}). Moreover, the spectral assumptions of Lemma \ref{lem_rotation_sg_with_one_eigenvalue_implies_contr_proj_hilbert_space} are clearly fulfilled. Since the semigroup $(e^{tA_\bbC}|_{Z_\bbC})_{t \ge 0}$ is periodic and since it is weakly asymptotically contractive on $Z \subset X$, we conclude that it acts in fact isometrically on $Z$. Thus, the assumptions of Lemma \ref{lem_rotation_sg_with_one_eigenvalue_implies_contr_proj_hilbert_space} are fulfilled and we can conclude from this lemma that $Z$ is not projectively non-Hilbert. This contradicts the statement proved in (c).
\end{proof}

A crucial argument in step (c) of the preceding proof is the use of a mean ergodic theorem to obtain a contractive projection onto the real part of $Y_1 \oplus Y_2$. This argument stems from the proof of \cite[Theorem 1]{Lyubich1970}. In this context, we should also mention that the condition on $(e^{tA})_{t \ge 0}$ to be weakly almost periodic can be slightly relaxed in Theorem \ref{theorem_point_spectrum_for_sg_on_projectively_non_hilbert_spaces}: Indeed, the proof of the theorem shows that we only need that each negative operator $-e^{tA}$ is mean ergodic. However, we preferred to state the theorem with the condition that $(e^{tA})_{t \ge 0}$ be weakly almost periodic, since this seems to be more natural than a condition on the negative operators $-e^{tA}$. \par
Note that Theorem \ref{theorem_point_spectrum_for_sg_on_projectively_non_hilbert_spaces} fails if we do not require the set $\sigma_{\operatorname{pnt}}(A) \cap i \bbR$ a priori to be bounded. A counter example is again provided by the shift semigroup on $L^p(\bbT;\bbR)$.

\subsection{Single operators on projectively non-Hilbert spaces} \label{subsection_single_operators_on_projectively_non_hilbert_spaces}

As in Section \ref{section_peripheral_point_spectrum_on_extremely_non_hilbert_spaces}, we now apply our semigroup result to study the single operator case. Recall that an operator $T$ on a (real or complex) Banach space $X$ is called \emph{weakly almost periodic} if for each $x \in X$, the orbit $\{T^nx: n\in \bbN_0\}$ is weakly pre-compact in $X$.

\begin{theorem} \label{theorem_point_spectrum_of_op_on_projectively_non_hilbert_space}
	Let $X$ be a real Banach space which is projectively non-Hilbert and let $T \in \calL(X)$ be weakly almost periodic. If $T$ is weakly asymptotically contractive and if $\sigma_{\operatorname{pnt}}(T) \cap \bbT$ is finite, then $\sigma_{\operatorname{pnt}}(T) \cap \bbT$ in fact only consists of roots of unity.
	\begin{proof}
		(a) By means of the well-known Jacobs-deLeeuw-Glicksberg decomposition (see e.g.~\cite[Section~2.4]{Krengel1985} and \cite[Section~V.2]{Engel2000}) we can find a projection $P: X \to X$ which reduces $T$ and has the following properties: the restriction of $T$ to $PX$ is bijective, $\sigma_{\operatorname{pnt}}(T|_{PX}) = \sigma_{\operatorname{pnt}}(T) \cap \bbT$ and any complexification $Y_\bbC$ of $PX$ is the closed linear span of all eigenvectors of $(T|_{PX})_{\bbC}$. Since $T$ is weakly asymptotically contractive, it follows that $P$ is contractive and that $T|_{PX}$ is isometric. In particular, $PX$ is projectively non-Hilbert. \par 
		We may thus assume for the rest of the proof that $T$ is an isometric bijection with $\sigma_{\operatorname{pnt}}(T) \subset \bbT$ and that any complexification $X_\bbC$ of $X$ is the closed linear span of all eigenvectors of the complex extension $T_\bbC$ of $T$. Since the point spectrum of $T_\bbC$ is finite and $T_\bbC$ is power bounded, we can show by a similar technique as in the proof of \cite[Theorem~1.1]{Markus1971} that the projection from the direct sum $\bigoplus_{\lambda \in \sigma_{\operatorname{pnt}}(T)} \operatorname{Eig}(\lambda, T_\bbC) \subset X_\bbC$ onto each eigenspace $\operatorname{Eig}(\lambda,T_\bbC)$ along the other eigenspaces is continuous. Hence, the direct sum is closed, which shows that actually $X_\bbC = \bigoplus_{\lambda \in \sigma_{\operatorname{pnt}}(T)} \operatorname{Eig}(\lambda, T_\bbC)$. \smallskip \par
		(b) We may choose a finite set $\calA \subset \bbR$ such that the exponential function maps $2\pi i \calA$ bijectively to $\sigma_{\operatorname{pnt}}(T)$. Let $\calB \subset \calA \cup \{1\}$ be a basis of $\operatorname{span}_\bbQ(\calA \cup \{1\})$ over the field of rational numbers $\bbQ$ which fulfils $1 \in \calB$. For each $\beta \in \calB$ and each $\alpha \in \calA$, denote by $\alpha_\beta \in \bbQ$ the uniquely determined rational number such that $\alpha = \sum_{\beta \in \calB} \alpha_\beta \beta$. We can find an integer $k \not= 0$ such that $k\alpha_\beta \in \bbZ$ for each $\alpha \in \calA$ and each $\beta \in \calB$. Now, assume for a contradiction that at least one element of $\calA$ is not contained in $\bbQ$. Then $\calB$ also contains an irrational number $\beta_0$. \smallskip \par
		(c) We define a strongly (in fact uniformly) continuous semigroup $(e^{tA})_{t \ge 0}$ on $X_\bbC$ by means of
		\begin{align*}
			e^{tA_\bbC}x_\alpha = e^{it k \alpha_{\beta_0}}x_\alpha \quad \text{for } x_\alpha \in \operatorname{Eig}(e^{2\pi i \alpha}, T_\bbC) \text{.}
		\end{align*}
		We show that this semigroup leaves $X$ invariant and that its restriction to $X$ is contractive: Let $t \ge 0$. Since $\calB$ is linearly independent over $\bbQ$ and since $1 \in \calB$, we conclude from Kronecker's Theorem that the powers of the tuple $(e^{2\pi i \beta})_{\beta \in \calB \setminus \{1\}}$ are dense in $\bbT^{\calB \setminus \{1\}}$. Hence, we can find a sequence of natural numbers $(N_n)_{n \in \bbN}$ such that $e^{2\pi i N_n \beta_0} \to e^{i t}$ and such that $e^{2\pi i N_n \beta} \to 1$ for each $\beta \in \calB \setminus \{\beta_0, 1\}$. Moreover, we clearly have $e^{2\pi i N_n \cdot 1} \to 1$, so that $e^{2\pi i N_n \beta} \to 1$ holds actually for each $\beta \in \calB \setminus \{\beta_0\}$. Therefore, we obtain for each $\alpha \in \calA$ and $x_\alpha \in \operatorname{Eig}(e^{2\pi i \alpha}, T_\bbC)$ that
		\begin{align*}
			T_\bbC^{k N_n} x_\alpha & = e^{2\pi i kN_n \alpha} x_\alpha =  \prod_{\beta \in \calB} e^{2\pi i k N_n \alpha_\beta \beta} x_\alpha = \\
			& = \prod_{\beta \in \calB} \big( e^{2\pi i N_n \beta} \big)^{k \alpha_\beta} x_\alpha \overset{n \to \infty}{\to} e^{itk \alpha_{\beta_0}} x_\alpha = e^{tA_\bbC}x_\alpha \text{,}
		\end{align*}
		which implies $e^{tA_\bbC}x = \lim_n T_\bbC^{kN_n}x$ for each $x \in X_\bbC$. This shows that $e^{tA_\bbC}$ leaves $X$ invariant and that it is contractive on $X$. We conclude that $A_\bbC$ is the complex extension of an operator $A$ on $X$ and that $A$ generates a contractive $C_0$-semigroup $(e^{tA})_{t \ge 0}$ on $X$ whose complex extension is given by $(e^{tA_\bbC})_{t \ge 0}$. \smallskip \par 
		(d) For the point spectrum of $A$ we have $\sigma_{\operatorname{pnt}}(A) = \{ik \alpha_{\beta_0}: \alpha \in \calA\}$. In particular, the element $0 \not= ik = ik (\beta_0)_{\beta_0}$ is contained in $\sigma_{\operatorname{pnt}}(A)$. However, we can apply Theorem \ref{theorem_point_spectrum_for_sg_on_projectively_non_hilbert_spaces} to the semigroup $(e^{tA})_{t \ge 0}$: Indeed, the semigroup $(e^{tA_\bbC})_{t \ge 0}$ is weakly almost periodic, since each trajectory $\{e^{tA_\bbC} x: t \ge 0\}$ is bounded and contained in a finite-dimensional space. Hence, $(e^{tA})_{t \ge 0}$ is almost weakly periodic, and Theorem \ref{theorem_point_spectrum_for_sg_on_projectively_non_hilbert_spaces} implies that $\sigma_{\operatorname{pnt}}(A) \cap i \bbR \subset \{0\}$. This is a contradiction.
	\end{proof}
\end{theorem}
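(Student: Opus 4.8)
The plan is to imitate the passage from the semigroup result to a single-operator statement, exactly as Theorem \ref{theorem_point_spectrum_for_operator_on_extremely_non_hilbert_space} was derived from Theorem \ref{theorem_point_spectrum_for_sg_on_extremely_non_hilbert_spaces} in the extremely non-Hilbert case. Arguing by contradiction, I would assume that the complex extension $T_\bbC$ has a unimodular eigenvalue which is not a root of unity, and then build, out of the powers of $T$, a contractive $C_0$-semigroup whose generator has a nonzero purely imaginary point spectrum; this will contradict Theorem \ref{theorem_point_spectrum_for_sg_on_projectively_non_hilbert_spaces}.

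First I would clean up the operator. Since $T$ is weakly almost periodic and weakly asymptotically contractive, the Jacobs-deLeeuw-Glicksberg decomposition furnishes a contractive reducing projection $P$ such that $T|_{PX}$ is an isometric bijection with $\sigma_{\operatorname{pnt}}(T|_{PX}) = \sigma_{\operatorname{pnt}}(T)\cap\bbT$ and such that any complexification of $PX$ is the closed linear span of the eigenvectors of $(T|_{PX})_\bbC$. As $PX$ is the range of a contractive projection it is again projectively non-Hilbert, so I may replace $X$ by $PX$ and assume that $T$ is an isometric bijection with finite, unimodular point spectrum. The finiteness of $\sigma_{\operatorname{pnt}}(T)$ should then let me upgrade the closed-span statement, via a Markus-type argument on the continuity of the spectral projections between the eigenspaces, to an honest finite direct sum $X_\bbC = \bigoplus_{\lambda \in \sigma_{\operatorname{pnt}}(T)} \operatorname{Eig}(\lambda, T_\bbC)$.

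The core of the argument is the construction of a rotation semigroup. Writing the eigenvalues as $e^{2\pi i\alpha}$ with $\alpha$ ranging over a finite set $\calA \subset \bbR$, I would fix a $\bbQ$-basis $\calB \ni 1$ of $\operatorname{span}_\bbQ(\calA \cup \{1\})$, record for each $\alpha$ its rational coordinates $\alpha_\beta$, and clear denominators to obtain an integer $k \neq 0$ with all $k\alpha_\beta \in \bbZ$. If some $\alpha \in \calA$ were irrational, then $\calB$ would contain an irrational element $\beta_0$, and I would define $A_\bbC$ on $X_\bbC$ to act as multiplication by $itk\alpha_{\beta_0}$ on each eigenspace $\operatorname{Eig}(e^{2\pi i\alpha}, T_\bbC)$. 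The decisive step is to show that the resulting semigroup descends to a contractive semigroup on the real space $X$. For this I would invoke Kronecker's simultaneous-approximation theorem to produce natural numbers $N_n$ with $e^{2\pi iN_n\beta_0} \to e^{it}$ while $e^{2\pi iN_n\beta} \to 1$ for every other $\beta \in \calB$, so that $T_\bbC^{kN_n} \to e^{tA_\bbC}$ pointwise on each eigenspace and hence on all of $X_\bbC$. Contractivity of $e^{tA}$ on $X$ is then inherited from the contractions $T_\bbC^{kN_n}|_X$.

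I expect this construction and density argument to be the main obstacle: the bookkeeping with the $\bbQ$-basis is precisely what isolates a single genuinely irrational frequency $\beta_0$, and Kronecker's theorem — applicable exactly because $\calA$, and hence $\calB$, is finite — is what allows the discrete dynamics of $T$ to approximate a continuous rotation semigroup. Once contractivity is secured the conclusion is immediate: the generator $A$ satisfies $0 \neq ik = ik(\beta_0)_{\beta_0} \in \sigma_{\operatorname{pnt}}(A)$, while the semigroup $(e^{tA_\bbC})_{t\ge 0}$ is weakly almost periodic because its orbits are bounded and lie in a fixed finite-dimensional subspace; Theorem \ref{theorem_point_spectrum_for_sg_on_projectively_non_hilbert_spaces} then forces $\sigma_{\operatorname{pnt}}(A)\cap i\bbR \subset \{0\}$, which is the sought contradiction.
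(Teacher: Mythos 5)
Your proposal follows the paper's own proof essentially verbatim: the same Jacobs--deLeeuw--Glicksberg reduction to an isometric bijection with finite unimodular point spectrum, the same Markus-type argument yielding the finite direct sum of eigenspaces, the same $\bbQ$-basis bookkeeping with the cleared denominator $k$ and the isolated irrational frequency $\beta_0$, the same Kronecker approximation $T_\bbC^{kN_n} \to e^{tA_\bbC}$ to obtain a contractive $C_0$-semigroup on $X$, and the same contradiction via Theorem \ref{theorem_point_spectrum_for_sg_on_projectively_non_hilbert_spaces}. The argument is correct and complete in outline.
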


For compact operators, Theorem \ref{theorem_point_spectrum_of_op_on_projectively_non_hilbert_space} was proved by Lyubich in \cite{Lyubich1970} by similar methods (but more directly, since the paper \cite{Lyubich1970} focussed on single operators rather than on $C_0$-semigroups). Moreover, on some important function spaces, Theorem \ref{theorem_point_spectrum_of_op_on_projectively_non_hilbert_space} was proved for compact operators even earlier by Krasnosel'ski{\u\i} in \cite{Krasnoselskii1968}. 

Under some additional a priori assumptions on the spectrum, we obtain the following simple corollary concerning the asymptotics of $(T^n)_{n \in \bbN_0}$:

\begin{corollary} \label{cor_periodicity_of_compact_operators_on_projectively_non_hilbert_space}
	Let $X$ be a real Banach space which is projectively non-Hilbert and suppose that $T \in \calL(X)$ is weakly asymptotically contractive. Assume furthermore that $\sigma(T) \cap \bbT$ is finite, isolated from the rest of the spectrum and consists only of poles of the resolvent $R(\cdot,T_\bbC)$. Then $T$ can be decomposed into two linear operators $T = T_{\operatorname{per}} + T_0$ such that $||T_0^n|| \to 0$ as $n \to \infty$ and $T_{\operatorname{per}}^{n_0+1} = T_{\operatorname{per}}$ for some $n_0 \ge 1$.
	\begin{proof}
		Let $T_\bbC$ denote the complex extension of $T$ to a complexification $X_\bbC$ of $X$ and let $P_\bbC$ be the spectral projection corresponding to the part $\sigma(T) \cap \bbT$ of the spectrum of $T_\bbC$. Then the powers of the operator $T_{\bbC, 0} := T_\bbC(1- P_\bbC)$ converge to $0$ with respect to the operator norm. \par
		Moreover, each $\lambda \in \sigma(T) \cap \bbT$ is a simple pole of the resolvent. Indeed, if this was not true, then we could find a vector $z \in \ker\big((\lambda-T)^2\big) \setminus \ker(\lambda-T)$ (this follows from \cite[Theorems~1 and~2 in Section VIII.8]{Yosida1995}. A brief induction argument then shows that $T^nz = n\lambda^{n-1}(T-\lambda)z + \lambda^n z$ for all $n \in \bbN_0$, contradicting the fact the $T$ is power-bounded. Thus, $\lambda$ is indeed a first order pole of the resolvent and hence, the range of the spectral projection corresponding to $\lambda$ coincides with the eigenspace $\ker(\lambda-T)$ \cite[Theorem~3 in Section~VIII.8]{Yosida1995}.
		
		Since $P_\bbC$ is the sum of all spectral projections corresponding to the single spectral values in $\bbT$, it follows that for each $x \in X_\bbC$ the vector $P_\bbC x$ can be written as a finite sum $P_\bbC x = x_1 + ... + x_m$, where $x_1$, ..., $x_m$ are all eigenvectors of $T$ belonging to eigenvalues in $\bbT$. Hence, the bounded set $\{T_\bbC^nP_\bbC x: n \in \bbN_0\}$ is contained in the finite dimensional space $\operatorname{span}_\bbC\{x_1,...,x_m\}$ and is thus pre-compact. Since for each $\varepsilon > 0$, the set
		\begin{align*}
			\{T_\bbC^n(1-P_\bbC)x: n \in \bbN_0\} = \{\big(T_\bbC (1-P_\bbC) \big)^nx: n\in \bbN_0\} \text{.}
		\end{align*}
		contains only finitely many elements whose norm is large than $\varepsilon$, we conclude that this set is pre-compact, as well. This implies that the trajectory $\{T_\bbC^nx: n \in \bbN_0\}$ is pre-compact. In particular, $T_\bbC$ is weakly almost periodic, and so is $T$. \par
		Hence, we can apply Theorem \ref{theorem_point_spectrum_of_op_on_projectively_non_hilbert_space} to conclude that $\sigma_{\operatorname{pnt}}(T) \cap \bbT$ only consists of roots of unity. Therefore, the powers of the operator $T_{\bbC, \operatorname{per}} := T_\bbC P_\bbC$ are periodic. Finally, note that the spectral projection $P_\bbC$ leaves $X$ invariant (this easily follows from the representation of $P_\bbC$ by Cauchy's Integral Formula), and hence the operators $T_{\bbC,0}$ and $T_{\bbC, \operatorname{per}}$ leave $X$ invariant, as well. Thus, the assertion follows with $T_0 := T_{\bbC, 0}|_X$ and $T_{\operatorname{per}} := T_{\bbC, \operatorname{per}}|_X$.
	\end{proof}
\end{corollary}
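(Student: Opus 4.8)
The plan is to reduce everything to Theorem~\ref{theorem_point_spectrum_of_op_on_projectively_non_hilbert_space}, which already controls the peripheral point spectrum; the real work is to build the decomposition and to check that the hypotheses of that theorem are met. First I would pass to the complex extension $T_\bbC$ on a complexification $X_\bbC$ and, using that $\sigma(T) \cap \bbT$ is finite, isolated and consists of poles, form the associated spectral (Riesz) projection $P_\bbC$ by a Cauchy integral around $\sigma(T) \cap \bbT$. This splits $T_\bbC$ into $T_{\bbC,\operatorname{per}} := T_\bbC P_\bbC$ and $T_{\bbC,0} := T_\bbC(1-P_\bbC)$. Since $1-P_\bbC$ projects onto the part of the spectrum strictly inside the unit disc (power-boundedness forces $r(T_\bbC) \le 1$), the spectral radius of $T_{\bbC,0}$ is less than $1$, so $\|T_{\bbC,0}^n\| \to 0$ by the spectral radius formula.

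Next I would show that every $\lambda \in \sigma(T) \cap \bbT$ is a \emph{simple} pole of the resolvent. This is where power-boundedness (which follows from weak asymptotic contractivity via the Uniform Boundedness Principle) enters: if some $\lambda$ were a pole of order at least two, the characterisation of poles of the resolvent gives a vector $z \in \ker((\lambda-T)^2) \setminus \ker(\lambda-T)$, and a short induction yields $T^n z = n\lambda^{n-1}(T-\lambda)z + \lambda^n z$, whose norm grows linearly in $n$ --- contradicting power-boundedness. Simplicity of the poles then forces the range of each single spectral projection to coincide with the eigenspace $\operatorname{Eig}(\lambda,T_\bbC)$, so $P_\bbC X_\bbC$ is finite-dimensional and spanned by eigenvectors of $T_\bbC$; in particular $T_{\bbC,\operatorname{per}}$ acts diagonalisably there.

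With the decomposition in hand, the crucial step is to verify that $T$ is weakly almost periodic so that Theorem~\ref{theorem_point_spectrum_of_op_on_projectively_non_hilbert_space} applies. For any $x \in X_\bbC$ I would write $x = P_\bbC x + (1-P_\bbC)x$: the orbit of $P_\bbC x$ stays bounded inside the finite-dimensional eigenvector span and is hence pre-compact, while the orbit of $(1-P_\bbC)x$ converges to $0$ (because $\|T_{\bbC,0}^n\| \to 0$) and therefore contains only finitely many elements of norm exceeding any given $\varepsilon > 0$, which also makes it pre-compact. Adding the two gives pre-compactness of the full orbit $\{T_\bbC^n x : n \in \bbN_0\}$, so $T_\bbC$, and hence $T$, is weakly almost periodic. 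Since $\sigma_{\operatorname{pnt}}(T) \cap \bbT$ is finite, Theorem~\ref{theorem_point_spectrum_of_op_on_projectively_non_hilbert_space} now yields that $\sigma_{\operatorname{pnt}}(T) \cap \bbT$ consists only of roots of unity.

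Finally I would conclude periodicity: because $T_{\bbC,\operatorname{per}}$ acts diagonally on $P_\bbC X_\bbC$ with eigenvalues that are roots of unity, taking $n_0$ to be a common period of these roots gives $T_{\bbC,\operatorname{per}}^{n_0+1} = T_{\bbC,\operatorname{per}}$ (both sides vanish on the complementary part). It then remains to descend from $X_\bbC$ to $X$: since $T$ is real and $\sigma(T)\cap\bbT$ is symmetric about the real axis, the Cauchy-integral representation shows that $P_\bbC$ leaves $X$ invariant, so $T_0 := T_{\bbC,0}|_X$ and $T_{\operatorname{per}} := T_{\bbC,\operatorname{per}}|_X$ are genuine operators on $X$ satisfying $T = T_{\operatorname{per}} + T_0$ with the asserted properties. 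I expect the verification of weak almost periodicity --- and, feeding into it, the simple-pole argument that secures finite-dimensionality of $P_\bbC X_\bbC$ --- to be the main obstacle, since everything else is either standard spectral theory or a direct appeal to the already-proved theorem.
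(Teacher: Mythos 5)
Your overall strategy coincides with the paper's: form the Riesz projection $P_\bbC$ for $\sigma(T)\cap\bbT$, kill the complementary part via $r(T_{\bbC,0})<1$, use power-boundedness to show each peripheral spectral value is a \emph{simple} pole (the same vector $z\in\ker((\lambda-T)^2)\setminus\ker(\lambda-T)$ and the same growth formula $T^nz=n\lambda^{n-1}(T-\lambda)z+\lambda^nz$), deduce weak almost periodicity, invoke Theorem~\ref{theorem_point_spectrum_of_op_on_projectively_non_hilbert_space}, and restrict back to $X$. All of that matches.

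There is, however, one claim in your write-up that is false and that you then lean on: you assert that simplicity of the poles forces $P_\bbC X_\bbC$ to be \emph{finite-dimensional}. Simplicity of a pole only gives that the range of the corresponding spectral projection equals the eigenspace $\ker(\lambda-T_\bbC)$; nothing in the hypotheses bounds the dimension of that eigenspace (take $T=\id$ on an infinite-dimensional space: $1$ is a simple pole of the resolvent with eigenspace all of $X$). This matters exactly where you use it, namely in the pre-compactness of the orbit of $P_\bbC x$: a bounded orbit inside an infinite-dimensional $P_\bbC X_\bbC$ is not automatically pre-compact, so your justification of weak almost periodicity has a gap. The repair is the one the paper makes: for each \emph{fixed} $x$, write $P_\bbC x=x_1+\dots+x_m$ with $x_j$ an eigenvector for the $j$-th peripheral eigenvalue (finitely many, since $\sigma(T)\cap\bbT$ is finite); the orbit $\{T_\bbC^nP_\bbC x\}$ then lives in the finite-dimensional space $\operatorname{span}_\bbC\{x_1,\dots,x_m\}$, which is what yields pre-compactness. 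Your remaining steps (periodicity of $T_{\bbC,\operatorname{per}}$ via a common period of the finitely many roots of unity, and the descent to $X$ via the Cauchy-integral representation of $P_\bbC$) do not actually need finite-dimensionality of $P_\bbC X_\bbC$ and survive unchanged.
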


Note that the assumptions on $\sigma(T) \cap \bbT$ in Corollary \ref{cor_periodicity_of_compact_operators_on_projectively_non_hilbert_space} are for example fulfilled if $T$ is compact or, more generally, if the essential spectral radius $r_{\operatorname{ess}}(T_\bbC) := \sup\{|\lambda|: \lambda - T_\bbC \text{ is not Fredholm}\}$ is strictly smaller than $1$. \par

In finite dimensions a result related to Corollary~\ref{cor_periodicity_of_compact_operators_on_projectively_non_hilbert_space} can also be found in~\cite[Theorem~2.1]{Lemmens2003}. Besides this, we want to mention that a very similar result holds even for non-linear operators if they satisfy an additional assumption on their \emph{$\omega$-limit sets}; this was shown by Lemmens and van Gaans in \cite[Theorem 2.8]{Lemmens2009}, employing a result on projectively non-Hilbert spaces from \cite[Theorem 4]{Lyubich1970}. For the special case of $L^p$-spaces, $1 < p < \infty$, $p \not= 2$, such a non-linear result had already been shown by Sine in \cite[Theorem 3]{Sine1990}.

\section{The spectrum on $L^p$-spaces} \label{section_peripheral_spectrum_in_capitel_l_p}

In the preceding sections, we considered $C_0$-semigroups $(e^{tA})_{t \ge 0}$ and focussed on results that ensure that $A$ has no purely imaginary eigenvalues. However, to turn those results into convergence results, we always needed rather strong compactness conditions. \par
In this final section, we will employ an ultra-power technique to ensure that the intersection of the entire spectrum with $i \bbR$ is trivial. This will allow us to derive a much more general convergence result. To make our method work, we have to ensure that an ultra-power of our space $X$ is still a projectively non-Hilbert space. This seems to be rather difficult in general, but $L^p$-spaces are particularly well-suited for this task since an ultra-power of an $L^p$-space is again an $L^p$-space.

\subsection{Ultra powers of Banach spaces}

We shortly recall the most important facts about ultra powers of Banach spaces that will be used in the next subsection. For a more detailed treatment, we refer for example to \cite{Heinrich1980}, \cite[p.\,251--253]{Meyer-Nieberg1991} and \cite[Section V.1]{Schaefer1974}. Let $X$ be a real or complex Banach space. By $l^\infty(X) := l^\infty(\bbN,X)$ we denote the space of all bounded sequences in $X$. We endow this space with the supremum norm $||x||_\infty = \sup_{n \in \bbN} ||x_n||$ for $x = (x_n)_{n \in \bbN} \in l^\infty(X)$. \par
Let $\calU$ be a free ultra filter on $\bbN$ and define $c_{0,\calU}(X) := \{x \in l^\infty(X): \lim_\calU x_n = 0\}$. The space $c_{0,\calU}(X)$ is a closed vector subspace of $l^\infty(X)$, and the quotient space $X_\calU := l^\infty(X) / c_{0,\calU}(X)$ is called the \emph{$\calU$-ultra power} of $X$. For each $x = (x_n)_{n \in \bbN} \in l^\infty(X)$, we denote by $x_\calU$ the equivalence class $x_\calU := x + c_{0,\calU}(X) \in X_\calU$. It turns out that we can compute the quotient norm on the space $X_\calU$ rather easily: For each $x_\calU \in X_\calU$, we have $||x_\calU||_{X_\calU} = \lim_\calU ||x_n||$ (for complex Banach spaces $X$, this can be found e.g.\ in \cite[Theorem 4.1.6]{Meyer-Nieberg1991} or \cite[Proposition V.1.2]{Schaefer1974}; for real Banach spaces, the proof is the same). Note that this limit always exists since $\calU$ is an ultra filter and since the set $\{||x_n||: n \in \bbN\}$ is pre-compact in $\bbR$. \par
If $T \in \calL(X)$, we can define an operator $\tilde T \in \calL(l^\infty(X))$ by the pointwise operation
\begin{align*}
	\tilde T x = \tilde T (x_n)_{n \in \bbN} := (Tx_n)_{n \in \bbN} \quad \text{for all } x = (x_n)_{n \in \bbN} \in l^\infty(X) \text{.}
\end{align*}

The bounded linear operator $\tilde T$ leaves the subspace $c_{0,\calU}(X) \subset l^\infty(X)$ invariant and thus induces another bounded linear operator $T_\calU \in \calL(X_\calU)$ via
\begin{align*}
	T_\calU x_\calU = (\tilde T x)_\calU \quad \text{for all } x_\calU \in X_\calU \text{.}
\end{align*}

One reason for the usefulness of ultra products in operator theory is the fact that the induced operator $T_\calU$ shares a lot of spectral properties with the original operator $T$ and even improves some of them:

\begin{proposition} \label{prop_spectrum_for_operators_on_ultra_power}
	Let $T \in \calL(X)$ for a complex Banach space $X$ and let $\calU$ be a free ultra filter on $\bbN$.
	\begin{itemize}
		\item[(a)] For the spectra of $T_\calU$ and $T$ we have $\sigma(T_\calU) = \sigma(T)$. \par 
		\item[(b)] For the approximate point spectra $\sigma_{\operatorname{appr}}$ we have
			\begin{align*}
				\sigma_{\operatorname{pnt}}(T_\calU) = \sigma_{\operatorname{appr}}(T_\calU) = \sigma_{\operatorname{appr}}(T) \text{.}
			\end{align*}
	\end{itemize}
	\begin{proof}
		See \cite[Theorem 4.1.6]{Meyer-Nieberg1991} or \cite[Proposition V.1.3 and Theorem V.1.4]{Schaefer1974}.
	\end{proof}
\end{proposition}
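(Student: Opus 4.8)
The plan is to exploit that the assignment $T \mapsto T_\calU$ is a unital algebra homomorphism from $\calL(X)$ into $\calL(X_\calU)$ which is isometric, i.e.\ $\|T_\calU\| = \|T\|$: the estimate $\|T_\calU x_\calU\| = \lim_\calU \|Tx_n\| \le \|T\| \lim_\calU \|x_n\|$ gives ``$\le$'', and choosing representatives $x_n$ with $\|x_n\| = 1$ and $\|Tx_n\| \to \|T\|$ gives ``$\ge$''. I would also record the canonical embedding $j \colon X \to X_\calU$, $x \mapsto (x,x,x,\dots)_\calU$, which is an isometry satisfying $T_\calU j = j T$. Both parts are then obtained by pushing approximate eigenvector sequences back and forth between $X$ and $X_\calU$ along $\calU$.

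For part (a), the inclusion $\sigma(T_\calU) \subseteq \sigma(T)$ is immediate from the homomorphism property: if $\lambda \notin \sigma(T)$ and $R := R(\lambda,T)$, then $R_\calU$ is a two-sided inverse of $\lambda - T_\calU$, since $(RS)_\calU = R_\calU S_\calU$ and $\id_\calU = \id$. For the reverse inclusion I would assume $\lambda \notin \sigma(T_\calU)$ and set $M := \|(\lambda - T_\calU)^{-1}\|$. Pulling the resulting lower bound back through $j$ yields $\|(\lambda - T)x\| = \|(\lambda - T_\calU) j(x)\| \ge M^{-1}\|x\|$ for all $x \in X$, so $\lambda - T$ is bounded below, hence injective with closed range. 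Surjectivity follows by representing: given $y \in X$, the vector $j(y)$ lies in the range of $\lambda - T_\calU$, so there is $\xi = (x_n)_\calU$ with $(\lambda - T_\calU)\xi = j(y)$, whence $\lim_\calU \|(\lambda - T)x_n - y\| = 0$; thus $y$ lies in the closure of the range, which equals the range. Hence $\lambda - T$ is bijective and $\lambda \notin \sigma(T)$.

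For part (b) I would first prove $\sigma_{\operatorname{appr}}(T) = \sigma_{\operatorname{pnt}}(T_\calU)$. If $\lambda \in \sigma_{\operatorname{appr}}(T)$, picking unit vectors $u_n$ with $\|(\lambda - T)u_n\| \to 0$ gives $u_\calU := (u_n)_\calU$ with $\|u_\calU\| = 1$ and $\|(\lambda - T_\calU)u_\calU\| = \lim_\calU \|(\lambda - T)u_n\| = 0$, so $\lambda \in \sigma_{\operatorname{pnt}}(T_\calU)$. Conversely, a normalized eigenvector $u_\calU = (u_n)_\calU$ of $T_\calU$ satisfies $\lim_\calU \|u_n\| = 1$ and $\lim_\calU \|(\lambda - T)u_n\| = 0$; selecting indices along $\calU$ where $\|u_n\| > \tfrac12$ and $\|(\lambda - T)u_n\|$ is small, and normalizing, produces a genuine approximate eigenvector sequence in $X$, so $\lambda \in \sigma_{\operatorname{appr}}(T)$. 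Since $\sigma_{\operatorname{pnt}} \subseteq \sigma_{\operatorname{appr}}$ always holds, it remains to prove $\sigma_{\operatorname{appr}}(T_\calU) \subseteq \sigma_{\operatorname{pnt}}(T_\calU)$, and this diagonal argument is where I expect the only real work to lie.

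Concretely, given $\lambda \in \sigma_{\operatorname{appr}}(T_\calU)$ I would choose unit vectors $\xi^{(k)} \in X_\calU$ with $\|(\lambda - T_\calU)\xi^{(k)}\| < 1/k$ and representatives $\xi^{(k)} = (x^{(k)}_n)_n$. Because $\|\xi^{(k)}\| = 1$, the sets $S_k := \{ n : \tfrac12 < \|x^{(k)}_n\| < 2 \text{ and } \|(\lambda - T)x^{(k)}_n\| < 1/k \}$ lie in $\calU$, and so do the decreasing sets $T_k := S_1 \cap \dots \cap S_k$. Setting $m(n) := \max\{k \le n : n \in T_k\}$ (and $m(n) := 0$ if $n \notin T_1$), one checks $\{n : m(n) \ge K\} \supseteq T_K \cap \{n \ge K\} \in \calU$ for every $K$, so $\lim_\calU m(n) = \infty$. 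The diagonal vector $u_n := x^{(m(n))}_n$ (a fixed unit vector where $m(n)=0$) then satisfies $\tfrac12 \le \lim_\calU \|u_n\| \le 2$ and $\lim_\calU \|(\lambda - T)u_n\| = 0$, since $\|(\lambda - T)u_n\| < 1/m(n)$ along $\calU$; hence $u_\calU \neq 0$ lies in $\ker(\lambda - T_\calU)$, giving $\lambda \in \sigma_{\operatorname{pnt}}(T_\calU)$. The main obstacle is precisely this extraction of one genuine eigenvector of $T_\calU$ from a sequence of approximate ones, and the two features that make it work are that $\calU$ is stable under finite intersections (so the $T_k$ remain in $\calU$) and that freeness of $\calU$ forces $m(n) \to \infty$ along $\calU$.
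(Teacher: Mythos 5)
Your argument is correct and complete. The paper itself offers no proof, only a citation to Meyer-Nieberg and Schaefer, and what you have written is essentially the standard argument found in those references, carried out in full: the quotient-norm formula $\|x_\calU\| = \lim_\calU \|x_n\|$, the multiplicativity of $T \mapsto T_\calU$, and the canonical isometric embedding $j$ do all the work for part (a) and for the two easy inclusions of part (b). You correctly identify the only genuinely nontrivial step, namely $\sigma_{\operatorname{appr}}(T_\calU) \subseteq \sigma_{\operatorname{pnt}}(T_\calU)$, and your diagonal construction is sound: each $S_k$ lies in $\calU$ because the ultralimits $\lim_\calU \|x^{(k)}_n\| = 1$ and $\lim_\calU \|(\lambda-T)x^{(k)}_n\| < 1/k$ force the defining conditions to hold $\calU$-largely, the finite intersections $T_k$ stay in $\calU$, the bound $\|u_n\| < 2$ on $\{m(n) \ge 1\}$ guarantees $(u_n) \in l^\infty(X)$, and the inclusion $\{n : m(n) \ge K\} \supseteq T_K \cap \{n \ge K\} \in \calU$ (using freeness of $\calU$ to absorb the tail set) yields $\lim_\calU \|(\lambda - T)u_n\| = 0$ while $\lim_\calU \|u_n\| \ge \tfrac12$ keeps $u_\calU \ne 0$. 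The only thing worth making explicit in a final write-up is that in part (a) the lower bound $\|(\lambda-T)x\| \ge M^{-1}\|x\|$ is what guarantees the range of $\lambda - T$ is closed, so that the density obtained from pulling back $j(y)$ really gives surjectivity; you state this, so there is no gap.
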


If $X$ is a real or complex Banach space and $S,T \in \calL(X)$, then we have $||T_\calU|| = ||T||$ and $(ST)_\calU = S_\calU T_\calU$. This is very easy to see and will be used tacitly below. For our application of ultra powers in the next subsection, it is important to know how they behave in connection with complexifications of real Banach spaces. This is described by the following proposition.

\begin{proposition} \label{prop_ultra_powers_and_complexification}
	Let $X_\bbC$ be a complexification of a real Banach space $X$ and let $T_\bbC$ be the complex extension of an operator $T \in \calL(X)$. Let $\calU$ be a free ultra filter on $\bbN$. Then $(X_\bbC)_\calU$ is a complexification of $(X)_\calU$ via the embedding
	\begin{align*}
		(X)_\calU & \to (X_\bbC)_\calU \text{,} \\
		x + c_{0,\calU}(X) & \mapsto  x + c_{0,\calU}(X_\bbC)
	\end{align*}
	and the operator $(T_\bbC)_\calU$ is the complex extension of the operator $T_\calU$.
	\begin{proof}
		The proof is straightforward and therefore left to the reader.
	\end{proof}
\end{proposition}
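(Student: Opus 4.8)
The plan is to verify directly the two defining conditions of a complexification for the stated embedding $\iota$, and then to identify the two operators by checking that they agree on the real part. Throughout, the essential tool is the explicit formula $\|x_\calU\| = \lim_\calU \|x_n\|$ for the ultra power norm, together with the fact (recorded in the Preliminaries) that the real and imaginary part maps $\re{\cdot}, \im{\cdot}: X_\bbC \to X$ are contractive.

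First I would check that $\iota$ is a well-defined isometric $\bbR$-linear embedding. Well-definedness and isometry both follow from the norm formula: if $x = (x_n)$ is a bounded sequence in $X$, then, since the embedding $X \hookrightarrow X_\bbC$ is isometric, $\|\iota(x_\calU)\|_{(X_\bbC)_\calU} = \lim_\calU \|x_n\|_{X_\bbC} = \lim_\calU \|x_n\|_X = \|x_\calU\|_{(X)_\calU}$; in particular $c_{0,\calU}(X)$ is mapped into $c_{0,\calU}(X_\bbC)$ and $\iota$ is injective. The $\bbR$-linearity is immediate from the pointwise definitions.

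The heart of the argument is the observation that the real and imaginary part maps on $(X_\bbC)_\calU$ are induced pointwise from those on $X_\bbC$. Given $z_\calU \in (X_\bbC)_\calU$ with representative $(z_n)$, the sequences $(\re{z_n})$ and $(\im{z_n})$ are bounded in $X$ because $\re{\cdot}$ and $\im{\cdot}$ are contractive, and they thus define elements of $(X)_\calU$. Since the complex scalar multiplication on $(X_\bbC)_\calU$ is also computed pointwise, one gets $z_\calU = \iota\big((\re{z_n})_\calU\big) + i\,\iota\big((\im{z_n})_\calU\big)$, which yields $(X_\bbC)_\calU = \iota((X)_\calU) + i\,\iota((X)_\calU)$. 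To see the sum is direct, suppose $\iota(x_\calU) = i\,\iota(y_\calU)$; then $\lim_\calU \|x_n - i y_n\|_{X_\bbC} = 0$, and contractivity of $\re{\cdot}$ forces $\lim_\calU \|x_n\| = 0$, so $\iota(x_\calU) = 0$. This establishes property~(a). For property~(b), the projection onto $\iota((X)_\calU)$ along $i\,\iota((X)_\calU)$ sends $z_\calU$ to $\iota\big((\re{z_n})_\calU\big)$, and it is contractive since $\|\iota((\re{z_n})_\calU)\| = \lim_\calU \|\re{z_n}\| \le \lim_\calU \|z_n\| = \|z_\calU\|$.

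Finally, for the operator statement I would argue that both $(T_\bbC)_\calU$ and the complex extension $(T_\calU)_\bbC$ of $T_\calU$ are $\bbC$-linear operators on the complexification $(X_\bbC)_\calU$, so by property~(a) it suffices to check that they coincide on $\iota((X)_\calU)$. For $x_\calU \in (X)_\calU$ with representative $(x_n)$ in $X$ we have $T_\bbC x_n = T x_n$ for every $n$, so that $(T_\bbC)_\calU \iota(x_\calU) = (T x_n) + c_{0,\calU}(X_\bbC) = \iota(T_\calU x_\calU)$, which is precisely the value prescribed for $(T_\calU)_\bbC \iota(x_\calU)$; hence the two operators agree. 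I expect no serious obstacle here: the only point requiring a little care is the pointwise nature of the real/imaginary decomposition on the ultra power, where one must invoke the contractivity of $\re{\cdot}$ and $\im{\cdot}$ to guarantee boundedness of the component sequences and well-definedness modulo $c_{0,\calU}$.
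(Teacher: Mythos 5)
Your proof is correct and is exactly the routine verification that the paper leaves to the reader: checking the two defining properties of a complexification via the ultra power norm formula $\|x_\calU\| = \lim_\calU \|x_n\|$ and the contractivity of $\re{\cdot}$ and $\im{\cdot}$, and then identifying $(T_\bbC)_\calU$ with $(T_\calU)_\bbC$ by $\bbC$-linearity and agreement on the real part. No gaps.
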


The last result that we need on ultra powers of Banach spaces is the stability of $L^p$-spaces with respect to the construction of ultra powers:

\begin{proposition} \label{prop_ultra_powers_of_capital_l_p_spaces}
	Let $(\Omega, \Sigma, \mu)$ be a measure space, let $1 \le p < \infty$ and let $X$ be the real-valued function space $X := L^p(\Omega,\Sigma,\mu; \bbR)$. If $\calU$ is a free ultra filter on $\bbN$, then there is a measure space $(\tilde \Omega, \tilde \Sigma, \tilde \mu)$ such that the ultra power $X_\calU$ is isometrically isomorphic to $L^p(\tilde \Omega, \tilde \Sigma, \tilde \mu; \bbR)$.
	\begin{proof}
		See \cite[Theorem 3.3 (ii)]{Heinrich1980}.
	\end{proof}
\end{proposition}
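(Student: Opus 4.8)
The plan is to recognise $X_\calU$ as an abstract $L^p$-space in the sense of Banach lattice theory and then to invoke the classical representation theorem which identifies such spaces with concrete function spaces $L^p(\tilde\Omega,\tilde\Sigma,\tilde\mu;\bbR)$.

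First I would equip $X_\calU$ with a Banach lattice structure inherited from $X = L^p$. The space $l^\infty(X)$ is a Banach lattice under the coordinatewise order, and the kernel $c_{0,\calU}(X)$ is a \emph{closed ideal}: if $x \in c_{0,\calU}(X)$ and $|y_n| \le |x_n|$ for all $n$, then monotonicity of the $L^p$-norm gives $\|y_n\| \le \|x_n\|$, whence $\lim_\calU \|y_n\| = 0$ and $y \in c_{0,\calU}(X)$. Consequently the quotient $X_\calU = l^\infty(X)/c_{0,\calU}(X)$ is again a Banach lattice, with lattice operations induced coordinatewise (so that $x_\calU \vee y_\calU = (x \vee y)_\calU$). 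A short computation, using that $u \ge 0$ forces $\lim_\calU \|z_n^-\| = 0$ for any representative $(z_n)$, shows that the positive cone consists exactly of those classes admitting a coordinatewise positive representative.

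The heart of the argument is to verify that the quotient norm is \emph{$p$-additive}, i.e.\ that $\|u+v\|^p = \|u\|^p + \|v\|^p$ whenever $u, v \in X_\calU$ are disjoint. Given disjoint $u, v \ge 0$, I would lift them to \emph{coordinatewise disjoint} representatives: choosing positive representatives $(x_n)$ and $(y_n)$, the disjointness $u \wedge v = 0$ means $\lim_\calU \|x_n \wedge y_n\| = 0$, so replacing $x_n, y_n$ by $x_n - x_n \wedge y_n$ and $y_n - x_n \wedge y_n$ alters neither class but makes the representatives disjoint in $L^p$ for every $n$. The exact $p$-additivity of the $L^p$-norm then yields $\|x_n + y_n\|^p = \|x_n\|^p + \|y_n\|^p$ coordinatewise, and passing to the $\calU$-limit (using continuity of $t \mapsto t^p$ together with the norm formula $\|u\| = \lim_\calU \|x_n\|$ recorded earlier) gives the claim.

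Finally, I would apply the abstract characterisation of $L^p$-spaces: a Banach lattice whose norm is $p$-additive on disjoint positive elements is lattice isometric to $L^p(\tilde\Omega,\tilde\Sigma,\tilde\mu;\bbR)$ for some (possibly non-$\sigma$-finite) measure space, which furnishes the desired isometric isomorphism. This representation theorem is classical and, for $p=1$, reduces to Kakutani's theorem for AL-spaces. I expect the only genuinely delicate point to be the lifting of disjointness in the $p$-additivity step; the ideal and limit arguments are routine, and the concluding representation result is a black box.
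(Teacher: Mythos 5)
Your argument is correct. The paper itself offers no proof at all here -- it simply cites \cite[Theorem 3.3 (ii)]{Heinrich1980} -- and what you have written is essentially a self-contained reconstruction of the standard argument behind that citation: ultrapowers of Banach lattices are Banach lattices (quotient of $l^\infty(X)$ by the closed ideal $c_{0,\calU}(X)$), the $p$-additivity of the norm on disjoint positive elements survives the passage to the ultrapower, and the Bohnenblust--Nakano/Kakutani representation theorem for abstract $L^p$-spaces then produces the concrete measure space $(\tilde\Omega,\tilde\Sigma,\tilde\mu)$. All the individual steps check out: $c_{0,\calU}(X)$ is indeed an ideal because the $L^p$-norm is a lattice norm; the quotient lattice operations are computed coordinatewise, so $u \wedge v = 0$ does translate into $\lim_\calU \|x_n \wedge y_n\| = 0$ for positive representatives; and the replacement $x_n \mapsto x_n - x_n \wedge y_n$, $y_n \mapsto y_n - x_n \wedge y_n$ produces genuinely disjoint positive representatives of the same classes, after which the $\calU$-limit of the coordinatewise identity $\|x_n+y_n\|^p = \|x_n\|^p + \|y_n\|^p$ gives $p$-additivity in $X_\calU$ via the quotient norm formula. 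The concluding representation theorem needs no auxiliary hypotheses (order continuity of the norm is automatic for $p$-additive norms, by reflexivity for $p>1$ and by Kakutani's AL-theory for $p=1$), so invoking it as a black box is legitimate; a reference in the spirit of the paper's own bibliography would be \cite[Theorem 2.7.1]{Meyer-Nieberg1991} for $p=1$ and its abstract-$L^p$ analogue for general $p$. What your route buys over the bare citation is transparency about exactly which structure is being used; what the citation buys is brevity and the extra information in Heinrich's theorem (e.g.\ statements about ultraproducts of families of $L^p$-spaces), none of which is needed here.
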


\subsection{Contraction semigroups on $L^p$-spaces} \label{subsection_contraction_semigroups_on_capital_l_p_spaces}

Now, we will use the ultra power technique described above together with the ideas of Section \ref{section_peripheral_point_spectrum_on_projectively_non_hilbert_spaces} to analyse the spectrum of certain semigroups on $L^p$-spaces. To this end, consider a $C_0$-semigroup $(e^{tA})_{t\ge 0}$ on a (real or complex) Banach space and suppose that its \emph{growth bound} $\omega(A)$ is larger than $- \infty$; the semigroup $(e^{tA})_{t \ge 0}$ is called \emph{norm continuous at infinity} if it fulfils the condition
\begin{align*}
	\lim_{t \to \infty} \limsup_{h \to 0} ||e^{(t+h)(A - \omega(A))} - e^{t(A - \omega(A))}|| = 0 \text{.}
\end{align*}
Of course a $C_0$-semigroups on a real Banach space $X$ is norm continuous at infinity if and only if its complex extension to any complexification $X_\bbC$ of $X$ is so. The class of $C_0$-semigroups which are norm continuous at infinity contains the class of all $C_0$-semigroups which are eventually norm continuous and fulfil $\omega(A) > -\infty$. \par
The notion of norm continuity at infinity was introduced by Martinez and Mazon in \cite[Definition 1.1]{Martinez1996}, where they showed several spectral properties of those semigroups. We will need the following of those properties in the sequel:

\begin{proposition} \label{prop_spectral_properties_of_sg_norm_coninuous_at_infty}
	Let $(e^{tA})_{t \ge 0}$ be a $C_0$-semigroup on a complex Banach space. Suppose that $\omega(A) > - \infty$ and that $(e^{tA})_{t \ge 0}$ is norm-continuous at infinity.
	\begin{itemize}
		\item[(a)] Let $\Gamma_t := \{\lambda \in \bbC: |\lambda| = r(e^{tA})\}$. Then the following partial spectral mapping theorem holds for each $t \ge 0$:
			\begin{align*}
				\sigma(e^{tA}) \cap \Gamma_t = e^{t\sigma(A)} \cap \Gamma_t \text{.}
			\end{align*}
		\item[(b)] There is an $\varepsilon > 0$ such that the set
			\begin{align*}
				\{\lambda \in \sigma(A): \re \lambda \ge s(A) - \varepsilon\}
			\end{align*}
			is bounded.
	\end{itemize}
	\begin{proof}
		See \cite[Theorem 1.2]{Martinez1996} for (a) and \cite[Theorem 1.9]{Martinez1996} for (b).
	\end{proof}
\end{proposition}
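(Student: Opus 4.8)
The plan is to reconstruct both assertions from a single analytic estimate on the resolvent of $A$ along vertical lines, combined with the now-classical Gearhart--Greiner--Pr\"uss circle of ideas. First I would pass to a normalised situation: replacing $A$ by $A-\omega(A)$ multiplies the semigroup by the scalar $e^{-t\omega(A)}$ and leaves the norm-continuity-at-infinity hypothesis intact (it is formulated in terms of $A-\omega(A)$ in the first place). Since $r(e^{tA})=e^{t\omega(A)}$ for every $t>0$, this turns each $\Gamma_t$ into the unit circle $\bbT$ and moves the critical line to $i\bbR$; because $s(A)\le\omega(A)=0$ after normalisation, both (a) and (b) reduce to statements about the behaviour of
\begin{align*}
	R(\lambda,A)=\int_0^\infty e^{-\lambda s}\,e^{sA}\,ds
\end{align*}
as $\re\lambda\downarrow 0$ and $|\im\lambda|\to\infty$, the integral converging for $\re\lambda>0$.

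The heart of the matter, and the step I expect to be the main obstacle, is to show that norm continuity at infinity forces the resolvent to be small in the imaginary direction near the critical line: for every $\varepsilon>0$ there should exist $R_0>0$ such that every $\lambda$ with $\re\lambda\ge 0$ and $|\im\lambda|\ge R_0$ lies in the resolvent set $\rho(A)$ and satisfies $\|R(\lambda,A)\|\le\varepsilon$. To obtain this I would split the Laplace integral at a large cutoff $t_0$: the contribution of $[0,t_0]$ is controlled crudely (and contributes a term vanishing in the imaginary direction by a Riemann--Lebesgue argument), whereas on $[t_0,\infty)$ the norm-continuity-at-infinity condition supplies exactly the oscillation control needed to make the tail small, uniformly in $\re\lambda$ near $0$. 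This uniform bookkeeping of the tail is the genuinely technical part and the place where the hypothesis is really used.

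Granting this estimate, part (b) follows softly. The estimate already places every $\lambda$ with $\re\lambda\ge 0$ and $|\im\lambda|\ge R_0$ in $\rho(A)$; a Neumann-series/perturbation argument, starting from the bounded resolvent on the portion of $i\bbR$ with $|\im\lambda|\ge R_0$, then extends this to a half-strip $\{\lambda:\re\lambda\ge-\varepsilon,\ |\im\lambda|\ge R_0\}$ for some $\varepsilon>0$. Hence $\sigma(A)$ cannot accumulate at infinity inside this strip, and therefore $\{\lambda\in\sigma(A):\re\lambda\ge s(A)-\varepsilon\}$ is bounded.

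For part (a), the inclusion $e^{t\sigma(A)}\cap\Gamma_t\subset\sigma(e^{tA})\cap\Gamma_t$ is the general spectral inclusion theorem (see \cite{Engel2000}), so only the reverse inclusion needs work. Here I would invoke the sufficiency direction of the Gearhart--Greiner--Pr\"uss theorem, which is valid on arbitrary Banach spaces: if $\beta+\frac{2\pi i}{t}\bbZ\subset\rho(A)$ and $\sup_{k\in\bbZ}\|R(\beta+\frac{2\pi i k}{t},A)\|<\infty$, then $e^{t\beta}\in\rho(e^{tA})$. On $\Gamma_t$ one has $\re\beta=0$, and the resolvent decay established above guarantees the supremum condition automatically whenever the whole line $\beta+\frac{2\pi i}{t}\bbZ$ lies in $\rho(A)$: only finitely many of these points can have $|\im\lambda|<R_0$, and the remaining ones contribute arbitrarily small resolvent norms. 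Thus, on $\Gamma_t$, we get $e^{t\beta}\in\rho(e^{tA})$ if and only if $\beta+\frac{2\pi i}{t}\bbZ\subset\rho(A)$; passing to complements yields $\sigma(e^{tA})\cap\Gamma_t=e^{t\sigma(A)}\cap\Gamma_t$, as claimed.
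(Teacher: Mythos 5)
The paper does not actually prove this proposition; it only cites \cite{Martinez1996}, so the question is whether your reconstruction is sound. It is not: the central estimate you build everything on is false. The claim that norm continuity at infinity forces $\|R(\lambda,A)\|\le\varepsilon$ for all $\lambda$ with $\re{\lambda}\ge \omega(A)$ and $|\im{\lambda}|\ge R_0$ fails already for the direct sum of the nilpotent shift semigroup on $L^1([0,1];\bbC)$ with the identity semigroup on $\bbC$. This semigroup has $\omega(A)=0$ and is constant for $t\ge 1$ (hence norm continuous at infinity), yet $\|R(i\tau,A)\|\ge 1$ for every $\tau\ne 0$: testing the shift part against $n\mathbbm{1}_{[0,1/n]}$ shows $\|R(i\tau,A_0)\|\ge 1$ for all $\tau$. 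The step that breaks is exactly the one you pass over lightly, namely the ``Riemann--Lebesgue argument'' for $\int_0^{t_0}e^{-\lambda s}e^{sA}\,ds$: the map $s\mapsto e^{sA}$ need not be norm measurable near $s=0$ (for the shift one has $\|e^{sA_0}-e^{s'A_0}\|=2$ for all $s\ne s'$ in $[0,1)$), so no Riemann--Lebesgue lemma is available and this head term does not vanish as $|\im{\lambda}|\to\infty$. What the hypothesis actually yields, and what \cite{Martinez1996} proves and uses, is decay of $\|e^{t_0 A}R(\lambda,A)\|$ for large $t_0$ --- i.e.\ of the tail $\int_{t_0}^\infty$ alone --- and the head must then be handled by the algebraic identities relating $\mu-A$ to $e^{\mu t}-e^{tA}$ via the operators $\int_0^t e^{\mu(t-s)}e^{sA}\,ds$, not by showing it is small.

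Part (a) has a second, independent defect: the sufficiency direction of the Gearhart--Greiner--Pr\"uss theorem (uniform resolvent bounds on $\beta+\tfrac{2\pi i}{t}\bbZ$ imply $e^{t\beta}\in\rho(e^{tA})$) is precisely the direction that requires Plancherel's theorem and is \emph{not} valid on general Banach spaces; since this proposition is applied in the paper on $L^p$ with $p\ne 2$, you cannot invoke it here. Finally, even granting your resolvent estimate, your argument for (b) only bounds $\sigma(A)\cap\{\re{\lambda}\ge \omega(A)-\varepsilon\}$, whereas the assertion concerns the a priori larger set $\{\re{\lambda}\ge s(A)-\varepsilon\}$; closing this gap requires $s(A)=\omega(A)$, which does follow from (a) because $\sigma(e^{tA})$ always meets $\Gamma_t$, but this is nowhere addressed. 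Your normalisation and the reduction of (a) to its nontrivial inclusion via the spectral inclusion theorem are fine; the remainder needs to follow the route of \cite{Martinez1996} rather than the one you propose.
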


The following theorem is the main result of this section.

\begin{theorem} \label{theorem_trivial_peripheral_spectrum_on_capital_l_p_spaces}
	Let $(\Omega, \Sigma, \mu)$ be a measure space and let $1 < p < \infty$, $p \not= 2$. Suppose that $(e^{tA})_{t \ge 0}$ is a $C_0$-semigroup on $X = L^p(\Omega, \Sigma, \mu; \bbR)$ which fulfils $\omega(A) > -\infty$, is norm continuous at infinity and uniformly asymptotically contractive. Then $\sigma(A) \cap i \bbR \subset \{0\}$.
\end{theorem}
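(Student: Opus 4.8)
The plan is to move the problem off of $X$ and onto an ultra power $X_\calU$, where three things work in our favour at once: by Proposition \ref{prop_ultra_powers_of_capital_l_p_spaces} the space $X_\calU$ is again a real $L^p$-space, hence reflexive and (Example \ref{ex_capital_l_p_spaces_as_projectively_non_hilbert_spaces}) projectively non-Hilbert, and by Proposition \ref{prop_spectrum_for_operators_on_ultra_power}(b) passing to $X_\calU$ turns approximate eigenvalues into genuine eigenvalues. The overall strategy is then to reconstruct the proof of Theorem \ref{theorem_point_spectrum_for_sg_on_projectively_non_hilbert_spaces} on $X_\calU$, using reflexivity in place of the weak almost periodicity hypothesis. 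First I would reduce: uniform asymptotic contractivity gives $\omega(A) \le 0$, and since $s(A) \le \omega(A)$ we may assume for a contradiction that some $i\alpha \in \sigma(A)$ with $\alpha \neq 0$ exists; then necessarily $s(A) = \omega(A) = 0$, so $r(e^{tA}) = 1$ for all $t$. As $\sigma(A) \subseteq \{\re \lambda \le 0\}$, the point $i\alpha$ is a boundary point of $\sigma(A)$ and therefore lies in $\sigma_{\operatorname{appr}}(A)$; I fix an approximate eigenvector $(x_n)$ with $\|x_n\| = 1$ and $(A - i\alpha)x_n \to 0$. Norm continuity at infinity enters via Proposition \ref{prop_spectral_properties_of_sg_norm_coninuous_at_infty}(b), which forces $\sigma(A) \cap i\bbR$ to be bounded; setting $M := \sup\{|\gamma| : i\gamma \in \sigma(A)\}$ (and noting we are done if $M = 0$), I rescale time so that the chosen $\alpha$ satisfies $\alpha > M/3$. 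This rescaling is exactly what will later exclude higher harmonics.

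Next I pass to a free ultra filter $\calU$. The powers $S_t := (e^{tA})_\calU$ form an algebraic (not strongly continuous) semigroup on $X_\calU$, and a variation-of-constants estimate shows that the lifted vector $x_\calU$ is a \emph{common} eigenvector: $S_t x_\calU = e^{it\alpha} x_\calU$ for \emph{every} $t \ge 0$, where I use Proposition \ref{prop_ultra_powers_and_complexification} to keep the real structure and its complexification aligned. Now set $R := S_{\pi/\alpha}$. Since $(-R)^n = (-1)^n (e^{n\pi A/\alpha})_\calU$ has norm $\|e^{n\pi A/\alpha}\|$, the operator $-R$ is power-bounded and uniformly asymptotically contractive. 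Because $X_\calU$ is reflexive, $-R$ is mean ergodic, and its Ces\`aro limit is a \emph{contractive} projection $Q \in \calL(X_\calU)$ onto $Z_\bbC := \operatorname{Eig}(-1, R_\bbC)$ which, being real, leaves $Z := Z_\bbC \cap X_\calU$ invariant. Thus $Z$ is the range of a contractive projection on the projectively non-Hilbert space $X_\calU$ and is therefore itself projectively non-Hilbert.

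The heart of the argument, and the step I expect to be the main obstacle, is to identify $Z_\bbC$ \emph{exactly} as $Y_1 \oplus Y_2$, where $Y_j$ are the spaces of common eigenvectors of $(S_t)_{t\ge 0}$ for the frequencies $i\alpha$ and $-i\alpha$. The inclusion $Y_1 \oplus Y_2 \subseteq Z_\bbC$ is immediate. For the reverse inclusion I would apply Lemma \ref{lem_eigenvectors_for_square_roots_of_eigenvalues} repeatedly to the square roots $S_{\pi/(2^m\alpha)}$ of $R$: an eigenvector of some $S_\tau$ for a unimodular value $e^{i\tau\gamma}$ lies, via Proposition \ref{prop_spectrum_for_operators_on_ultra_power}(b) together with the partial spectral mapping theorem of Proposition \ref{prop_spectral_properties_of_sg_norm_coninuous_at_infty}(a), over a genuine spectral value $i\gamma \in \sigma(A)$. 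For $\tau$ small the lattice of admissible frequencies has spacing $2\pi/\tau > 2M$, so boundedness of $\sigma(A) \cap i\bbR$ pins the frequency down to $\gamma$, and the rescaling $\alpha > M/3$ leaves only $\gamma = \pm\alpha$; a dyadic-density argument then upgrades the property ``eigenvector of all $S_{\pi/(2^m\alpha)}$ with frequency $\alpha$'' to ``$S_t w = e^{it\alpha} w$ for all $t$'', yielding $Z_\bbC = Y_1 \oplus Y_2$. Controlling the eigenspaces of a merely algebraic semigroup in this way is the delicate part of the proof.

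Once $Z_\bbC = Y_1 \oplus Y_2$ is established, the restricted family $(S_t|_Z)_{t \ge 0}$ involves only the two frequencies $\pm\alpha$ and is hence uniformly (in particular strongly) continuous; the periodicity-plus-asymptotic-contractivity argument (as in the proof of Theorem \ref{theorem_point_spectrum_for_operator_on_extremely_non_hilbert_space}) shows it is a $C_0$-semigroup of isometries on $Z$ whose generator $C$ satisfies $\sigma_{\operatorname{pnt}}(C) = \{i\alpha, -i\alpha\}$ and $Z_\bbC = \operatorname{Eig}(i\alpha, C_\bbC) \oplus \operatorname{Eig}(-i\alpha, C_\bbC)$. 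Since $x_\calU \in Y_1$, the space $Z$ is nonzero. After rescaling time by $1/\alpha$, Lemma \ref{lem_rotation_sg_with_one_eigenvalue_implies_contr_proj_hilbert_space} applies and gives that $Z$ is \emph{not} projectively non-Hilbert, contradicting the conclusion of the second paragraph. This contradiction shows $\sigma(A) \cap i\bbR \subset \{0\}$.
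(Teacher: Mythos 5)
Your proposal follows the paper's proof almost step for step: the reduction to $\omega(A) = s(A) = 0$, the lift to the ultrapower $X_\calU$ (again a reflexive, projectively non-Hilbert $L^p$-space by Proposition \ref{prop_ultra_powers_of_capital_l_p_spaces} and Example \ref{ex_capital_l_p_spaces_as_projectively_non_hilbert_spaces}), the mean ergodic projection onto $Z_\bbC = Y_1 \oplus Y_2$ obtained from reflexivity, the iterated use of Lemma \ref{lem_eigenvectors_for_square_roots_of_eigenvalues} with the frequency-pinning coming from Propositions \ref{prop_spectrum_for_operators_on_ultra_power} and \ref{prop_spectral_properties_of_sg_norm_coninuous_at_infty}, and the final contradiction via Lemma \ref{lem_rotation_sg_with_one_eigenvalue_implies_contr_proj_hilbert_space}. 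One phrasing slip: rescaling time does not change the ratio $\alpha/M$, so to achieve $\alpha > M/3$ you must \emph{choose} $i\alpha$ to be an extremal point of the compact set $\sigma(A) \cap i\bbR$ (this is what the paper's normalization $\pm i \in \sigma(A)$, $\sigma(A) \cap i\bbR \subset i\cdot[-1,1]$ accomplishes); with that choice your bound $\alpha > M/3$ is exactly what is needed to exclude the harmonics at the first square root, and your variation-of-constants construction of a common eigenvector $x_\calU$ is a legitimate substitute for the paper's observation that boundary spectrum is approximate point spectrum.

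The step you yourself flag as delicate is, however, genuinely incomplete as written. Your square-root iteration yields $S_{\pi d/\alpha} w = e^{i\pi d} w$ for $w \in Y_1$ only for \emph{dyadic} $d$, and ``dyadic density'' cannot by itself upgrade this to all real times, because the lifted family $(S_t)_{t \ge 0}$ is not strongly continuous on $X_\calU$: there is nothing to interpolate with. This is precisely where norm continuity at infinity must be used a second time, and quantitatively. For $w \in Y_1$, $t \ge 0$ and $\varepsilon > 0$ the paper picks $n$ so large that $\limsup_{h \to 0} \|e^{(n\pi/(2\alpha)+t+h)A} - e^{(n\pi/(2\alpha)+t)A}\| \le \varepsilon$, and since $S_{n\pi/(2\alpha)} w$ is a unimodular multiple of $w$ (known from the dyadic case), this estimate pulls back to $\limsup_{h \to 0} \|S_{t+h} w - S_t w\| \le \varepsilon$; hence $t \mapsto S_t w$ is continuous on $Y_1 \cup Y_2$, and only then does density of the dyadic times give $S_t w = e^{it\alpha} w$ for all $t \ge 0$. (An alternative closure, closer in spirit to the proof of Theorem \ref{theorem_point_spectrum_for_operator_on_extremely_non_hilbert_space}, is to define a rotation semigroup on $Z_\bbC$ directly by $y_1 + y_2 \mapsto e^{it\alpha} y_1 + e^{-it\alpha} y_2$ and to transfer isometry on $Z$ from the operators $S_{\pi d/\alpha}$, $d$ dyadic, by approximation.) Without one of these devices, the identification of $Z_\bbC$ with the common eigenspaces over all real $t$, and hence the continuity of $(S_t|_{Z_\bbC})_{t \ge 0}$ on which your final paragraph relies, is not established.
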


Before we prove the theorem, let us briefly discuss its assumptions and its consequences: For some comments on the validity of Theorem \ref{theorem_trivial_peripheral_spectrum_on_capital_l_p_spaces} on other spaces, we refer to Remark \ref{remarks_main_thm_on_more_general_spaces} below. The assumption $\omega(A) > -\infty$ in the theorem is of course only a technical condition to ensure that the notion ``norm-continuous at infinity'' is well-defined. For eventually norm continuous semigroups, the following formulation of Theorem \ref{theorem_trivial_peripheral_spectrum_on_capital_l_p_spaces} might be more convenient:

\begin{corollary} \label{cor_trivial_peripheral_spectrum_on_capital_l_p_spaces}
	Let $(\Omega, \Sigma, \mu)$ be a measure space and let $1 < p < \infty$, $p \not= 2$. Suppose that $(e^{tA})_{t \ge 0}$ is a $C_0$-semigroup on $X = L^p(\Omega, \Sigma, \mu; \bbR)$ which is eventually norm continuous and uniformly asymptotically contractive. Then $\sigma(A) \cap i \bbR \subset \{0\}$.
	\begin{proof}
		If $\omega(A) = - \infty$, the assertion is trivial. If $\omega(A) > -\infty$, our semigroup is norm-continuous at infinity and thus the assertion follows from Theorem \ref{theorem_trivial_peripheral_spectrum_on_capital_l_p_spaces}.
	\end{proof}
\end{corollary}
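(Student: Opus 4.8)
The plan is to convert the statement about the full spectrum into a statement about the \emph{point} spectrum by passing to an ultra power, and then to run essentially the argument behind Theorem \ref{theorem_point_spectrum_for_sg_on_projectively_non_hilbert_spaces}. First I would dispose of the easy case. Uniform asymptotic contractivity forces $\omega(A)\le 0$, and the partial spectral mapping theorem (Proposition \ref{prop_spectral_properties_of_sg_norm_coninuous_at_infty}(a)) shows $s(A)=\omega(A)$: choosing $\mu\in\sigma(e^{tA})$ of maximal modulus $r(e^{tA})=e^{t\omega(A)}$ places it in $e^{t\sigma(A)}$ and yields a $\lambda\in\sigma(A)$ with $\re\lambda=\omega(A)$. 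Hence if $\omega(A)<0$ the spectrum lies in an open left half plane and there is nothing to prove; so I may assume $\omega(A)=s(A)=0$ and $r(e^{tA})=1$. Arguing by contradiction, suppose $i\alpha\in\sigma(A)$ with $\alpha\neq 0$. Since $A$ is real, $\pm i\alpha\in\sigma(A)$, and since $\sigma(A)\cap i\bbR$ is compact (closed and bounded by Proposition \ref{prop_spectral_properties_of_sg_norm_coninuous_at_infty}(b)), I may rescale $A$ by a positive constant (which preserves every hypothesis) so that $\pm i$ are the purely imaginary spectral values of \emph{largest} modulus; then $\sigma(A)\cap i\bbR\subseteq i[-1,1]\subseteq i(-3,3)$.

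Next I would move to an ultra power. As $\pm i$ lie on the boundary of $\sigma(A)\subseteq\{\re\lambda\le 0\}$, they belong to $\sigma_{\operatorname{appr}}(A)$; picking approximate eigenvectors $z_n$ with $\|z_n\|=1$ and $(A_\bbC-i)z_n\to 0$, the identity $e^{tA_\bbC}z_n-e^{it}z_n=\int_0^t e^{(t-s)A_\bbC}e^{is}(A_\bbC-i)z_n\,ds$ shows that in the ultra power the vector $\xi:=(z_n)_\calU$ is a genuine common eigenvector, $(e^{tA})_{\calU,\bbC}\,\xi=e^{it}\xi$ for every $t\ge 0$, with $\|\xi\|=1$. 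By Proposition \ref{prop_ultra_powers_of_capital_l_p_spaces} the space $X_\calU$ is again an $L^p$-space, hence projectively non-Hilbert by Example \ref{ex_capital_l_p_spaces_as_projectively_non_hilbert_spaces}, and it is reflexive; Proposition \ref{prop_ultra_powers_and_complexification} identifies $(X_\calU)_\bbC$ with $(X_\bbC)_\calU$ and $(e^{tA})_{\calU,\bbC}$ with $(e^{tA_\bbC})_\calU$, while Proposition \ref{prop_spectrum_for_operators_on_ultra_power} records the improved spectral behaviour of the induced operators. The decisive feature of $\xi$ is that its orbit $t\mapsto e^{it}\xi$ is continuous, so $\xi$ behaves like an eigenvector of a generator even though the ultra power semigroup itself need not be strongly continuous.

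Now I would imitate steps (b)--(d) of the proof of Theorem \ref{theorem_point_spectrum_for_sg_on_projectively_non_hilbert_spaces}. Put $T:=(e^{\pi A})_\calU$; it is power bounded with $\limsup_n\|T^n\|\le 1$, so on the reflexive space $X_\calU$ the operator $-T$ is mean ergodic and its Cesàro means converge to a projection onto the fixed space $\operatorname{Eig}(-1,T_\bbC)$, which is contractive because the Cesàro average of the asymptotically contractive sequence $\|(-T)^n\|$ has $\limsup\le 1$. Applying Lemma \ref{lem_eigenvectors_for_square_roots_of_eigenvalues} to $(e^{(\pi/2)A})_\calU$ and $\lambda=-1$ splits this fixed space as $\operatorname{Eig}(i,(e^{(\pi/2)A})_{\calU,\bbC})\oplus\operatorname{Eig}(-i,(e^{(\pi/2)A})_{\calU,\bbC})$, and the bound $\sigma(A)\cap i\bbR\subseteq i(-3,3)$ should force this to coincide with the direct sum $Z_\bbC=Y_1\oplus Y_2$ of the two ``generator eigenspaces'' $Y_1\ni\xi$, $Y_2\ni\overline\xi$ on which the family acts as the rotations $e^{\pm it}$. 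Thus the real part $Z$ of $Z_\bbC$ is the range of a contractive projection in the projectively non-Hilbert space $X_\calU$ and is therefore itself projectively non-Hilbert. On $Z$ the semigroup restricts to the periodic rotation semigroup, which is isometric (each orbit is periodic, so asymptotic contractivity together with invertibility pins the norm to $1$, exactly as in the proof of Theorem \ref{theorem_point_spectrum_for_sg_on_extremely_non_hilbert_spaces}) and is a $C_0$-semigroup with $\sigma_{\operatorname{pnt}}=\{i,-i\}$ and $Z_\bbC=\operatorname{Eig}(i)\oplus\operatorname{Eig}(-i)$. Lemma \ref{lem_rotation_sg_with_one_eigenvalue_implies_contr_proj_hilbert_space} then asserts that $Z$ is \emph{not} projectively non-Hilbert, which is the desired contradiction.

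I expect the main obstacle to be precisely the non-strong-continuity of the ultra power semigroup $((e^{tA})_\calU)_{t\ge 0}$: Theorem \ref{theorem_point_spectrum_for_sg_on_projectively_non_hilbert_spaces} cannot be quoted for it directly, and, more seriously, the generator--semigroup eigenspace relation \cite[Corollary IV.3.8]{Engel2000} used in the original proof to identify $\operatorname{Eig}(-1,e^{\pi A_\bbC})$ with $\operatorname{Eig}(i,A_\bbC)\oplus\operatorname{Eig}(-i,A_\bbC)$ has no verbatim analogue in the absence of a generator. The work therefore lies in showing that the mean-ergodic fixed space $\operatorname{Eig}(-1,T_\bbC)$ contains no ``inconsistent'' eigenvectors, i.e.\ that it equals the consistent space $Z_\bbC$ on which $(e^{tA})_\calU$ is a genuine rotation; here the boundedness of $\sigma(A)\cap i\bbR$ together with norm continuity at infinity (via a spectral mapping theorem for the approximate point spectrum, transported through the ultra power by Proposition \ref{prop_spectrum_for_operators_on_ultra_power}) is exactly what rules out the stray peripheral eigenvalues. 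Everything else is a faithful transcription of the projectively non-Hilbert argument.
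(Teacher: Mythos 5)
You have in effect re-derived Theorem \ref{theorem_trivial_peripheral_spectrum_on_capital_l_p_spaces}: the paper's own proof of this corollary is a one-line reduction (eventual norm continuity together with $\omega(A)>-\infty$ gives norm continuity at infinity, and the theorem applies), and your ultra-power strategy --- mean ergodic projection onto $\operatorname{Eig}(-1,(e^{\pi A_\bbC})_\calU)$, Lemma \ref{lem_eigenvectors_for_square_roots_of_eigenvalues}, Lemma \ref{lem_rotation_sg_with_one_eigenvalue_implies_contr_proj_hilbert_space} --- is precisely the proof of that theorem. The disposal of the case $\omega(A)<0$ is fine (you do not even need $s(A)=\omega(A)$, only the standard inequality $s(A)\le\omega(A)$), and the normalization $\sigma(A)\cap i\bbR\subset i[-1,1]$ is legitimate.

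The genuine gap is the one you flag in your last paragraph and then leave open: you must show that $\operatorname{Eig}(e^{\pm i\pi/2},(e^{(\pi/2)A_\bbC})_\calU)$ coincides with the set of vectors on which the lifted family acts as the rotation $e^{\pm it}$ for \emph{all} $t\ge 0$, and that the restriction of the lifted family to $Z_\bbC$ is a genuine $C_0$-semigroup; otherwise $Z_\bbC$ is not known to equal the range of the mean ergodic projection and Lemma \ref{lem_rotation_sg_with_one_eigenvalue_implies_contr_proj_hilbert_space} cannot be invoked. Your integral identity produces one vector $\xi$ with continuous rotational orbit, but the argument needs the whole eigenspace, and ``a spectral mapping theorem for the approximate point spectrum'' is not the mechanism that closes this. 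The paper closes it in two concrete steps. First, Lemma \ref{lem_eigenvectors_for_square_roots_of_eigenvalues} is iterated over the dyadic times $\pi/2^n$: the eigenvalue $e^{i\pi/2^{n}}$ of $(e^{(\pi/2^{n})A_\bbC})_\calU$ has square roots $e^{i\pi/2^{n+1}}$ and $-e^{i\pi/2^{n+1}}$, and the eigenspace for the latter is trivial because $\sigma((e^{tA_\bbC})_\calU)\cap\bbT=\sigma(e^{tA_\bbC})\cap\bbT\subset\{e^{i\varphi}:|\varphi|\le t\}$ by Proposition \ref{prop_spectrum_for_operators_on_ultra_power}(a) combined with Proposition \ref{prop_spectral_properties_of_sg_norm_coninuous_at_infty}(a); this yields $(e^{\pi dA_\bbC})_\calU y=e^{i\pi d}y$ for all dyadic $d\ge 0$ and $y\in Y_1$. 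Second, norm continuity at infinity is used to prove that $t\mapsto(e^{tA_\bbC})_\calU y$ is continuous for $y\in Y_1\cup Y_2$, by shifting time by $n\pi/2$ (where the operator acts on $y$ as a known unimodular scalar) and letting $n\to\infty$ in the defining $\limsup$; this upgrades the dyadic identities to all real $t\ge 0$ and gives the strong continuity on $Z_\bbC$ that Lemma \ref{lem_rotation_sg_with_one_eigenvalue_implies_contr_proj_hilbert_space} requires. These two steps are the actual content of the proof and are missing from your proposal.
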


For the asymptotic behaviour of our semigroups, we obtain the following corollary:

\begin{corollary} \label{cor_convergence_of_semigroups_on_l_p_spaces}
	Suppose that the assumptions of Theorem \ref{theorem_trivial_peripheral_spectrum_on_capital_l_p_spaces} or Corollary \ref{cor_trivial_peripheral_spectrum_on_capital_l_p_spaces} are fulfilled. Then $e^{tA}$ converges strongly as $t \to \infty$.
	\begin{proof}
		This follows from \cite[Corollary 2.6]{Arendt1988} or from \cite[Exercise V.2.25 (4) (ii)]{Engel2000}.
	\end{proof}
\end{corollary}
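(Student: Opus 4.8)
The plan is to obtain the convergence as a direct consequence of the triviality of the peripheral spectrum established in Theorem~\ref{theorem_trivial_peripheral_spectrum_on_capital_l_p_spaces} and Corollary~\ref{cor_trivial_peripheral_spectrum_on_capital_l_p_spaces}, by feeding it into a convergence theorem of Arendt--Batty--Lyubich--V\~u type. First I would dispose of the degenerate case: if $\omega(A) = -\infty$ (which can only occur under the hypotheses of Corollary~\ref{cor_trivial_peripheral_spectrum_on_capital_l_p_spaces}), then $\|e^{tA}\|$ decays faster than any exponential, so $e^{tA}$ converges to $0$ even in operator norm and there is nothing to prove. Hence I may assume $\omega(A) > -\infty$, which is exactly the setting of Theorem~\ref{theorem_trivial_peripheral_spectrum_on_capital_l_p_spaces}; applying that theorem records $\sigma(A) \cap i\bbR \subseteq \{0\}$.

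Next I would verify the standing hypotheses of the convergence theorem. Uniform asymptotic contractivity gives $\limsup_{t\to\infty}\|e^{tA}\| \le 1$, which bounds $\|e^{tA}\|$ for large $t$; combined with the local bound on compact time intervals coming from strong continuity and the uniform boundedness principle, the semigroup is bounded. The set $\sigma(A) \cap i\bbR$ is contained in $\{0\}$, hence finite and in particular countable. Passing to the complex extension $A_\bbC$ and using that the spectrum of an operator and of its adjoint coincide, we obtain $\sigma_{\operatorname{pnt}}((A_\bbC)') \cap i\bbR \subseteq \sigma((A_\bbC)') \cap i\bbR = \sigma(A) \cap i\bbR \subseteq \{0\}$, so the adjoint carries no purely imaginary eigenvalue other than possibly $0$. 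These are precisely the ingredients required by \cite[Corollary~2.6]{Arendt1988} (equivalently \cite[Exercise~V.2.25 (4) (ii)]{Engel2000}), which then yields strong convergence of $e^{tA}$ as $t \to \infty$.

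The only delicate point is the spectral value $0$, since the underlying stability theorems produce convergence to $0$ only when $i\bbR \cap \sigma(A)$ is empty, whereas here $0$ may lie in $\sigma(A)$ and the limit is then generally nonzero. This is where the geometry of $X = L^p(\Omega,\Sigma,\mu;\bbR)$ with $1 < p < \infty$ enters through its reflexivity: the bounded semigroup is mean ergodic, so $X = \ker A \oplus \overline{\operatorname{ran} A}$ with mean ergodic projection $P$ onto the fixed space. On $\ker A$ the semigroup acts as the identity, and on $\overline{\operatorname{ran} A}$ the generator has no eigenvalue at $0$; the spectral gap furnished by Proposition~\ref{prop_spectral_properties_of_sg_norm_coninuous_at_infty}(b) (applicable because the semigroup is norm continuous at infinity) then isolates $0$ and reduces the assertion on $\overline{\operatorname{ran} A}$ to the stability theorem, giving $e^{tA}x \to Px$ for every $x \in X$.

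I expect the main obstacle to be exactly this bookkeeping around the spectral value $0$ --- confirming that the hypotheses of the cited convergence theorem are genuinely met there and that reflexivity supplies the mean ergodic splitting that separates the fixed space. Once the fixed space has been split off, the remaining stability statement is verbatim the theorem cited above, so no additional hard estimates are needed.
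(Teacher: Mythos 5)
Your proposal is correct and takes essentially the same route as the paper, whose entire proof is the citation of \cite[Corollary~2.6]{Arendt1988} (equivalently \cite[Exercise~V.2.25~(4)~(ii)]{Engel2000}): your verification of that corollary's hypotheses --- boundedness of the semigroup, countability of $\sigma(A) \cap i\bbR$, and $\sigma_{\operatorname{pnt}}((A_\bbC)') \cap i\bbR \subseteq \{0\}$ --- together with the mean ergodic splitting $X = \ker A \oplus \overline{\operatorname{ran} A}$ handling the spectral value $0$ is exactly the content that the cited corollary encapsulates. The only superfluous step is the appeal to Proposition~\ref{prop_spectral_properties_of_sg_norm_coninuous_at_infty}~(b) to ``isolate'' $0$: the cited convergence theorem requires no spectral gap at $0$, only the adjoint eigenvalue condition you already established.
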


\begin{proof}[Proof of Theorem \ref{theorem_trivial_peripheral_spectrum_on_capital_l_p_spaces}]
	Except for the ultra-power technique involved, the major steps of proof are rather similar to those in the proof of Theorem \ref{theorem_point_spectrum_for_sg_on_projectively_non_hilbert_spaces}. \smallskip \par
	(a) We have $\omega(A) \le 0$ and for $\omega(A) < 0$ the assertion is trivial. So let $\omega(A) = 0$, and assume for a contradiction that the assertion of the theorem fails. Replacing $A$ by $c A$ for some $c > 0$, we may assume that $i,-i \in \sigma(A)$ and that $\sigma(A) \cap i\bbR \subset i \cdot [-1,1]$. Let $X_\bbC$ be a complexification of $X$ and let $(e^{tA_\bbC})_{t \ge 0}$ be the complex extension of the semigroup $(e^{tA})_{t \ge 0}$. We then have for each $t \ge 0$ that $\sigma(e^{itA_\bbC}) \cap \bbT = e^{it\sigma(A)} \cap \bbT \subset \{e^{i\varphi}: |\varphi| \le t\}$; the equality on the left follows from Proposition \ref{prop_spectral_properties_of_sg_norm_coninuous_at_infty} (a) since $r(e^{tA_\bbC}) = e^{t\omega(A_\bbC)} = 1$. \smallskip \par
	
	(b) Let $\calU$ be a free ultra filter on $\bbN$ and denote by $X_\calU$ and $(X_\bbC)_\calU$ the corresponding ultra powers of $X$ and $X_\bbC$. According to Proposition \ref{prop_ultra_powers_and_complexification}, $(X_\bbC)_\calU$ is a complexification of $X_\calU$ and the operator $(e^{tA_\bbC})_\calU$ is the complex extension of $(e^{tA})_\calU$. Hence, it follows from Proposition \ref{prop_spectrum_for_operators_on_ultra_power} that $\sigma_{\operatorname{pnt}}((e^{tA})_\calU) \cap \bbT = \sigma(e^{tA}) \cap \bbT$ for each $t \ge 0$, since the boundary of the spectrum of an operator is automatically contained in the approximate point spectrum. Besides that, we still have $\limsup_{t \to \infty} ||(e^{tA})_\calU|| \le 1$ However, note that the operator family $((e^{tA_\bbC})_\calU)_{t \ge 0}$ might not be strongly continuous on $(X_\bbC)_\calU$, although it of course still fulfils the semigroup law. \smallskip \par
	
	(c) We proceed to analyse some properties of several eigenspaces that will be used in the rest of the proof. Define
	\begin{align*}
		Y_1 := \operatorname{Eig}(e^{\frac{\pi}{2}i}, (e^{\frac{\pi}{2}A_\bbC})_\calU) \quad \text{and} \quad Y_2 = \operatorname{Eig}(e^{-\frac{\pi}{2}i}, (e^{\frac{\pi}{2}A_\bbC})_\calU) \text{.}
	\end{align*}
	By virtue of Lemma \ref{lem_eigenvectors_for_square_roots_of_eigenvalues} it follows that $\operatorname{Eig}(-1, (e^{\pi A_\bbC})_\calU) = Y_1 \oplus Y_2 =: Z_\bbC$. From the same Lemma and from the fact that $\sigma((e^{itA_\bbC})_{\calU}) \cap \bbT = \sigma(e^{itA_\bbC}) \cap \bbT \subset \{e^{i\varphi}: |\varphi| \le t\}$, we also conclude that $Y_1 = \operatorname{Eig}(e^{i\frac{\pi}{2^n}},(e^{\frac{\pi}{2^n}A_\bbC})_\calU)$ and $Y_2 = \operatorname{Eig}(e^{-i\frac{\pi}{2^n}}, (e^{\frac{\pi}{2^n}A_\bbC})_\calU)$ for all $n \in \bbN$. Thus,
	\begin{align}
		Y_1 \subset \operatorname{Eig}(e^{i\pi d}, (e^{\pi  d A_\bbC})_\calU) \quad \text{and} \quad Y_2 \subset \operatorname{Eig}(e^{-i\pi d}, (e^{\pi d A_\bbC})_\calU) \label{form_inclusions_for_eigenspaces_dyadic_numbers_lifted}
	\end{align}
	for each dyadic number $d \ge 0$, i.e.\ for each number of the form $d = \frac{k}{2^n}$, where $k,n \in \bbN_0$. Next, note that the mapping $[0,\infty) \to (X_\bbC)_\calU$, $t \mapsto (e^{tA_\bbC})_\calU x_\calU$ is continuous for each $x_\calU \in Y_1$ and each $x_\calU \in Y_2$: Indeed, let $x_\calU \in Y_1$ such that $||x_\calU|| = 1$, let $t \ge 0$ and $\varepsilon > 0$. For sufficiently large $n \in \bbN$, the norm continuity at infinity of $(e^{tA_\bbC})_{t \ge 0}$ implies that
	\begin{align*}
		& \limsup_{h \to 0} ||(e^{(\frac{n\pi}{2}+t+h)A_\bbC})_\calU - (e^{(\frac{n\pi}{2}+t)A_\bbC})_\calU|| = \\
		& = \limsup_{h \to 0} ||e^{(\frac{n\pi}{2}+t+h)A_\bbC} - e^{(\frac{n\pi}{2}+t)A_\bbC}|| \le \varepsilon \text{.} 
	\end{align*}
	Hence, we have
	\begin{align*}
		\varepsilon & \ge \limsup_{h \to 0} ||(e^{(\frac{n\pi}{2}+t+h)A_\bbC})_\calU x_\calU - (e^{(\frac{n\pi}{2}+t)A_\bbC})_\calU x_\calU|| = \\
		& = \limsup_{h \to 0} ||(e^{(t+h)A_\bbC})_\calU x_\calU - (e^{tA_\bbC})_\calU x_\calU|| \text{,}
	\end{align*}
	where the last equality follows from the fact that $(e^{\frac{\pi n}{2}A_\bbC})_\calU x_\calU = e^{i\frac{\pi n}{2}} x_\calU$. Since $\varepsilon > 0$ was an arbitrary number, we conclude that
	\begin{align*}
		\lim_{h \to 0} ||(e^{(t+h)A_\bbC})_\calU x_\calU - (e^{tA_\bbC})_\calU x_\calU|| = 0 \text{.}
	\end{align*}
	Similarly, we can show the continuity for $x_\calU \in Y_2$. The strong continuity of $t \mapsto (e^{tA_\bbC})_\calU$ on $Y_1$ and $Y_2$ together with (\ref{form_inclusions_for_eigenspaces_dyadic_numbers_lifted}) implies that
	\begin{align}
		\operatorname{Eig}(e^{it}, (e^{tA_\bbC})_\calU) \supset Y_1 \quad \text{and} \quad \operatorname{Eig}(e^{-it}, (e^{tA_\bbC})_\calU) \supset Y_2 \label{form_inclusions_for_eigenspaces_lifted}
	\end{align}
	for all $t \ge 0$. \smallskip \par
	
	(d) Let us now analyse the subspace $Z_\bbC = Y_1 \oplus Y_2$. This space coincides with the fixed space of the power-bounded operator $-(e^{\pi A_\bbC})_\calU$. Since $X_\calU$ is an $L^p$-space (see Proposition \ref{prop_ultra_powers_of_capital_l_p_spaces}) and $1 < p < \infty$, the space $X_\calU$ is reflexive and so is its complexification $(X_\bbC)_\calU$. Thus, $-(e^{\pi A_\bbC})_\calU$ is mean ergodic. The mean ergodic projection $P: (X_\bbC)_\calU \to (X_\bbC)_\calU$ has $Z_\bbC$ as its range. Moreover, $P_\bbC$ leaves $X_\calU$ invariant and the restriction $P_\bbC|_{X_\calU}$ is contractive, since $\limsup_{n \to \infty}||(-(e^{\pi A_\bbC})_\calU)^n|_{X_\calU}|| = \limsup_{n \to \infty} ||((e^{\pi A})_\calU)^n|| \le 1$. Note that $Z_\bbC$ is the complexification of the space $Z := P_\bbC(X_\calU) = X_\calU \cap Z_\bbC$ and that $Z$ is projectively non-Hilbert since it is the image of the projectively non-Hilbert space $X_\calU$ under the contractive projection $P_\bbC|_{X_\calU}$. \smallskip \par
	
	(e)	Finally, we want to apply Lemma \ref{lem_rotation_sg_with_one_eigenvalue_implies_contr_proj_hilbert_space} to the space $Z_\bbC = Y_1 \oplus Y_2$  and to the restricted semigroup $((e^{tA_\bbC})_\calU|_{Z_\bbC})_{t \ge 0}$. It follows from (\ref{form_inclusions_for_eigenspaces_lifted}), that the space $Z_\bbC$ is indeed invariant with respect to the operators $(e^{t A_\bbC})_\calU$, and the strong continuity assertion shown in (c) implies that the restricted semigroup $((e^{tA_\bbC})_\calU|_{Z_\bbC})_{t \ge 0}$ is indeed a $C_0$-semigoup. Due to (\ref{form_inclusions_for_eigenspaces_lifted}), this $C_0$-semigroup satisfies the spectral assumptions in Lemma \ref{lem_rotation_sg_with_one_eigenvalue_implies_contr_proj_hilbert_space}. Since it is also periodic (again due to (\ref{form_inclusions_for_eigenspaces_lifted})) and uniformly asymptotically contractive on $Z \subset X_\calU$, the semigroup acts in fact isometrically on $Z$. Thus, the conditions in Lemma \ref{lem_rotation_sg_with_one_eigenvalue_implies_contr_proj_hilbert_space} are fulfilled and hence, the lemma yields that $Z$ is \emph{not} projectively non-Hilbert. This is a contradiction to (d).
\end{proof}

After this proof, some remarks on the validity of Theorem \ref{theorem_trivial_peripheral_spectrum_on_capital_l_p_spaces} on other spaces are in order.

\begin{remarks} \label{remarks_main_thm_on_more_general_spaces} 
	(a) For contractive $C_0$-semigroups which are norm-continuous at infinity, the assertion of Theorem \ref{theorem_trivial_peripheral_spectrum_on_capital_l_p_spaces} remains true on $X := L^1(\Omega, \Sigma, \mu; \bbR)$. The proof of this assertion relies on the theory of Banach lattices and is therefore rather different from our proof above. The precise argument works as follows: \par 
	First note that $X_\bbC := L^1(\Omega,\Sigma, \mu; \bbC)$ is a complexification of our space $X$. Moreover, if the operator $e^{tA}$ is contractive, then its complex extension $e^{tA_\bbC}$ to $X_\bbC$ is contractive as well; this follows from \cite[Proposition 2.1.1]{Fendler1998}. Now, assume for a contradiction that $0\not= i \beta \in \sigma(A)$, where $\beta \in \bbR$. Then $e^{i\beta t} \in \sigma(e^{tA})$ for all $t \ge 0$. Moreover, it follows from Proposition \ref{prop_spectral_properties_of_sg_norm_coninuous_at_infty} that $\sigma(e^{tA}) \cap \bbT$ is contained in a sector of small angle in the right half plane if we choose $t > 0$ sufficiently small. However, since $e^{it\beta} \in \sigma(e^{tA})$, it follows from \cite[Corollary 2 to Theorem V.7.5]{Schaefer1974}	that $e^{i(2n+1)t \beta} \in \sigma(e^{tA})$ for each $n \in \bbZ$. This is a contradiction. \par 
	The result from \cite[Corollary 2 to Theorem V.7.5]{Schaefer1974} that we used in the last step is based on the spectral theory of positive operators on Banach lattices (although the operator under consideration is itself not positive) and on the special structure of $L^1$-spaces. Besides that, we note that the above argument strongly depends on the fact that $e^{tA}$ is contractive for \emph{small} $t$. Currently, the author does not know whether the assertion $\sigma(A) \cap i \bbR \subset \{0\}$ remains true on $L^1$-spaces if we consider uniformly asymptotically contractive instead of contractive semigroups. \par 
	(b) In Theorem \ref{theorem_trivial_peripheral_spectrum_on_capital_l_p_spaces} we do not really need to consider $L^p$-spaces. Instead, we can replace the $L^p$-space in the theorem by a real Banach space $X$ which fulfils that some ultra power $X_\calU$ of $X$ is still reflexive and projectively non-Hilbert. The theorem remains true for those spaces, since its proof only uses that $X_\calU$ is reflexive and projectively non-Hilbert. However, currently the author does not know any example of a real Banach space $X$ which is not an $L^p$-space, but has a reflexive and projectively non-Hilbert ultra power $X_\calU$.
\end{remarks}

One might wonder why we required the semigroup to be uniformly asymptotically contractive in Theorem \ref{theorem_trivial_peripheral_spectrum_on_capital_l_p_spaces}, which is slightly stronger than the weak asymptotic contractivity that we required in the Theorems \ref{theorem_point_spectrum_for_sg_on_extremely_non_hilbert_spaces} and \ref{theorem_point_spectrum_for_sg_on_projectively_non_hilbert_spaces}. The problem with weak (and even with strong) asymptotic contractivity is that it does not carry over to the lifted operators on an ultra power. The uniform condition $\limsup_{t \to \infty} ||e^{tA}|| \le 1$ however still holds for the lifted operators, since their norm coincides with the norm of the original operators. In fact, the assertion of Theorem \ref{theorem_trivial_peripheral_spectrum_on_capital_l_p_spaces} may fail for strongly (and in particular for weakly) asymptotically contractive semigroups:

\begin{example} \label{example_pointwise_asymp_contr_is_not_sufficient_for_triviality_of_periph_spec_sg}
	Let $1 < p < \infty$, $p \not= 2$, and endow $\bbR^2$ with the $p$-Norm. For each $n \in \bbN$, let $A_n$ be the operator on $\bbR^2$ whose representation matrix with respect to the canonical basis is given by
	\begin{align*}
		\begin{pmatrix}
			-\frac{1}{n} & -1 \\
			1 & - \frac{1}{n}
		\end{pmatrix} \text{.}
	\end{align*}
	We have $\sigma(A_n) = \{i - \frac{1}{n}, -i - \frac{1}{n}\}$, and the representation matrix of $e^{tA_n}$ with respect to the canonical basis is given by
	\begin{align*}
		e^{-\frac{1}{n}t}
		\begin{pmatrix}
			\cos t & - \sin t \\
			\sin t & \cos t
		\end{pmatrix}
	\end{align*}
	We clearly have $||e^{tA_n}|| \to 0$ as $t \to \infty$. Now, consider the vector-valued $l^p$-space $X = l^p(\bbN;\bbR^2)$ (which is isometrically isomorphic to the space $l^p(\bbN;\bbR)$) and let $A := \bigoplus_{n=1}^\infty A_n \in \calL(X)$. Then $A$ generates a $C_0$-semigroup $(e^{tA})_{t \ge 0}$ on $X$ and it is easy to see that this semigroup fulfils $\limsup_{t \to \infty} ||e^{tA}x|| \le ||x||$ (and in fact even $e^{tA}x \to 0$ as $t \to \infty$) for each $x \in X$, but that $\limsup_{t \to \infty} ||e^{tA}|| > 1$. Thus, the semigroup is strongly, but not uniformly asymptotically contractive. Furthermore, we have $\sigma(A) \cap i \bbR = \{i,-i\} \not \subset \{0\}$, so the assertion of Theorem \ref{theorem_trivial_peripheral_spectrum_on_capital_l_p_spaces} does not hold for the semigroup $(e^{tA})_{t \ge 0}$. However, note that $i$ and $-i$ are not eigenvalues of $A$, which is in accordance with Theorem \ref{theorem_point_spectrum_for_sg_on_projectively_non_hilbert_spaces}.
\end{example}

Example \ref{example_pointwise_asymp_contr_is_not_sufficient_for_triviality_of_periph_spec_sg} is based on an idea that was used in \cite{ArendtInPrep} to construct a counter example in the theory of eventually positive semigroups. Note that we can easily modify Example \ref{example_pointwise_asymp_contr_is_not_sufficient_for_triviality_of_periph_spec_sg} to obtain a semigroup which is strongly asymptotically contractive, but even fulfils $\sigma(A) \cap i\bbR = i \cdot [-1,1]$: We simply have to multiply the off-diagonal entries of the representation matrices of $A_n$ by numbers $\alpha_n$, where $(\alpha_n)_{n \in \bbN} \subset [0,1]$ is a sequence which is dense in $[0,1]$.

\subsection{Contractive single operators on $L^p$-spaces}

Using the ultra power technique described at the beginning of this section, we can also derive a result on single operators which is similar to Theorem \ref{theorem_point_spectrum_of_op_on_projectively_non_hilbert_space}, but now for the spectrum instead of the point spectrum. Actually the proof for the single operator case is much easier than for the semigroup case, since we can simply lift a single operator to an ultra power of $X$ and apply Theorem \ref{theorem_point_spectrum_of_op_on_projectively_non_hilbert_space}.

\begin{theorem} \label{theorem_spectrum_of_op_on_projectively_non_hilbert_space}
	Let $(\Omega, \Sigma, \mu)$ be a measure space, let $1 < p < \infty$, $p \not= 2$ and set $X := L^p(\Omega, \Sigma, \mu; \bbR)$. Suppose that $T \in \calL(X)$ is uniformly asymptotically contractive. If $\sigma(T) \cap \bbT$ is finite, then it only consists of roots of unity.
	\begin{proof}
		Let $\calU$ be a free ultra filter on $\bbN$; the ultra power $X_\calU$ of $X$ is an $L^p$-space again and we have $\sigma(T) \cap \bbT = \sigma_{\operatorname{pnt}}((T)_\calU) \cap \bbT$. Since the operator $T_\calU$ is defined on a reflexive $L^p$-space with $p\not=2$ and fulfils the condition $\limsup_{n \to \infty} ||T_\calU^n|| \le 1$, we can apply Theorem \ref{theorem_point_spectrum_of_op_on_projectively_non_hilbert_space} to conclude that $\sigma_{\operatorname{pnt}}(T_\calU) \cap \bbT$ consists only of roots of unity.
	\end{proof}
\end{theorem}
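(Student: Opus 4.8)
The plan is to lift $T$ to an ultra power $X_\calU$ of $X$ and to invoke the point-spectrum result Theorem~\ref{theorem_point_spectrum_of_op_on_projectively_non_hilbert_space} there. The whole reason for passing to the ultra power is that it converts the hard-to-control peripheral \emph{spectrum} of $T$ into the more tractable peripheral \emph{point} spectrum of $T_\calU$, while preserving all the structural features of the underlying space that Theorem~\ref{theorem_point_spectrum_of_op_on_projectively_non_hilbert_space} requires.

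First I would fix a free ultra filter $\calU$ on $\bbN$ and record, via Proposition~\ref{prop_ultra_powers_of_capital_l_p_spaces}, that the ultra power $X_\calU$ is again a real $L^p$-space. Since $1 < p < \infty$ this space is reflexive, and since $p \not= 2$ it is projectively non-Hilbert by Example~\ref{ex_capital_l_p_spaces_as_projectively_non_hilbert_spaces}; thus $X_\calU$ is a space of the type to which Theorem~\ref{theorem_point_spectrum_of_op_on_projectively_non_hilbert_space} applies. The contractivity of $T$ transfers cheaply, because the operator norm passes to the ultra power unchanged: one has $\|T_\calU^n\| = \|(T^n)_\calU\| = \|T^n\|$, so $\limsup_{n \to \infty} \|T_\calU^n\| \le 1$ and $T_\calU$ is uniformly (hence weakly) asymptotically contractive and power-bounded. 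Weak almost periodicity of $T_\calU$ is then automatic on the reflexive space $X_\calU$, since bounded orbits are relatively weakly compact.

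The key identity to establish is $\sigma(T) \cap \bbT = \sigma_{\operatorname{pnt}}(T_\calU) \cap \bbT$, and this is where I expect the only real subtlety to lie. I would pass to a complexification, use Proposition~\ref{prop_ultra_powers_and_complexification} to identify $(T_\bbC)_\calU$ with the complex extension of $T_\calU$, and then apply Proposition~\ref{prop_spectrum_for_operators_on_ultra_power}, which yields both $\sigma(T_\calU) = \sigma(T)$ and $\sigma_{\operatorname{pnt}}(T_\calU) = \sigma_{\operatorname{appr}}(T)$. The inclusion $\sigma_{\operatorname{pnt}}(T_\calU) \cap \bbT \subset \sigma(T) \cap \bbT$ is immediate from $\sigma_{\operatorname{appr}}(T) \subset \sigma(T)$. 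For the reverse inclusion I would exploit that uniform asymptotic contractivity forces the spectral radius to satisfy $r(T) \le 1$, so that $\sigma(T)$ is contained in the closed unit disc; consequently every spectral value of modulus $1$ sits on the topological boundary of $\sigma(T)$ and is therefore an approximate eigenvalue, giving $\sigma(T) \cap \bbT \subset \sigma_{\operatorname{appr}}(T) = \sigma_{\operatorname{pnt}}(T_\calU)$. In particular the finiteness hypothesis on $\sigma(T) \cap \bbT$ transfers verbatim to $\sigma_{\operatorname{pnt}}(T_\calU) \cap \bbT$.

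With the identity in hand, I would simply apply Theorem~\ref{theorem_point_spectrum_of_op_on_projectively_non_hilbert_space} to $T_\calU$: being weakly almost periodic, weakly asymptotically contractive, and having finite peripheral point spectrum, the operator $T_\calU$ satisfies $\sigma_{\operatorname{pnt}}(T_\calU) \cap \bbT$ consists only of roots of unity, and hence so does $\sigma(T) \cap \bbT$. The hard part is thus entirely concentrated in the spectral correspondence $\sigma(T) \cap \bbT = \sigma_{\operatorname{pnt}}(T_\calU) \cap \bbT$, which marries the ultra-power spectral dictionary of Proposition~\ref{prop_spectrum_for_operators_on_ultra_power} with the boundary-of-spectrum argument that is precisely where the asymptotic contractivity of $T$ enters; everything else reduces to the standard facts on ultra powers recalled above.
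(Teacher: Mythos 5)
Your proposal is correct and follows essentially the same route as the paper: lift $T$ to an ultra power of $X$, use Propositions \ref{prop_ultra_powers_of_capital_l_p_spaces}, \ref{prop_ultra_powers_and_complexification} and \ref{prop_spectrum_for_operators_on_ultra_power} together with the fact that the boundary of the spectrum lies in the approximate point spectrum to get $\sigma(T)\cap\bbT=\sigma_{\operatorname{pnt}}(T_\calU)\cap\bbT$, and then apply Theorem \ref{theorem_point_spectrum_of_op_on_projectively_non_hilbert_space} on the reflexive, projectively non-Hilbert space $X_\calU$. You merely spell out details the paper leaves implicit (the boundary-of-spectrum argument, the verification of weak almost periodicity via reflexivity), all of which are correct.
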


Comparing our results on $C_0$-semigroups with our results on single operators, it seems that the semigroup results are much more satisfying. For example, the boundedness condition on $\sigma_{\operatorname{pnt}}(A) \cap i\bbR$ from Theorem \ref{theorem_point_spectrum_for_sg_on_projectively_non_hilbert_spaces} and the norm continuity assumption from Theorem \ref{theorem_trivial_peripheral_spectrum_on_capital_l_p_spaces} will be satisfied on many occasions, even without any compactness assumptions. By contrast, the condition from Theorems \ref{theorem_spectrum_of_op_on_projectively_non_hilbert_space} and \ref{theorem_point_spectrum_of_op_on_projectively_non_hilbert_space} that the peripheral (point) spectrum $\sigma(T) \cap \bbT$ (respectively $\sigma_{\operatorname{\operatorname{pnt}}}(T) \cap \bbT$) be finite seems to be rather strong. It would be interesting to know whether the same conclusions for single operators hold under weaker a priori assumptions on the spectrum.

\subsection*{Acknowledgements} I wish to thank Stephan Fackler for bringing the result on contractive projections in $L^p$-spaces in \cite[Theorem~6]{Tzafriri1969} to my attention. During his work on this article, the author was supported by a scholarship of the ``Lan\-des\-gra\-duier\-ten\-f\"or\-der\-ung Baden-W\"urttemberg''.

\bibliographystyle{plain}

\begin{thebibliography}{10}

\bibitem{Arendt2008}
W.~Arendt.
\newblock Positive semigroups of kernel operators.
\newblock {\em Positivity}, 12(1):25--44, 2008.

\bibitem{Arendt1988}
W.~Arendt and C.~J.~K. Batty.
\newblock Tauberian theorems and stability of one-parameter semigroups.
\newblock {\em Trans. Amer. Math. Soc.}, 306(2):837--852, 1988.

\bibitem{Arendt1986}
W.~Arendt, A.~Grabosch, G.~Greiner, U.~Groh, H.~P. Lotz, U.~Moustakas,
  R.~Nagel, F.~Neubrander, and U.~Schlotterbeck.
\newblock {\em One-parameter semigroups of positive operators}, volume 1184 of
  {\em Lecture Notes in Mathematics}.
\newblock Springer-Verlag, Berlin, 1986.

\bibitem{ArendtInPrep}
D.~Daners, J.~Gl{\"u}ck, and J.~Kennedy.
\newblock Eventually and asymptotically positive semigroups on {B}anach
  lattices.
\newblock Submitted.

\bibitem{Davies2005}
E.~B. Davies.
\newblock Triviality of the peripheral point spectrum.
\newblock {\em J. Evol. Equ.}, 5(3):407--415, 2005.

\bibitem{Delbaen1998}
F.~Delbaen, H.~Jarchow, and A.~Pe{\l}czy{\'n}ski.
\newblock Subspaces of {$L_p$} isometric to subspaces of {$l_p$}.
\newblock {\em Positivity}, 2(4):339--367, 1998.

\bibitem{Emelyanov2007}
E.~Yu. Emel'yanov.
\newblock {\em Non-spectral asymptotic analysis of one-parameter operator
  semigroups}, volume 173 of {\em Operator Theory: Advances and Applications}.
\newblock Birkh\"auser Verlag, Basel, 2007.

\bibitem{Engel2000}
K.-J. Engel and R.~Nagel.
\newblock {\em One-parameter semigroups for linear evolution equations}, volume
  194 of {\em Graduate Texts in Mathematics}.
\newblock Springer-Verlag, New York, 2000.
\newblock With contributions by S. Brendle, M. Campiti, T. Hahn, G. Metafune,
  G. Nickel, D. Pallara, C. Perazzoli, A. Rhandi, S. Romanelli and R.
  Schnaubelt.

\bibitem{Fendler1998}
G.~Fendler.
\newblock On dilations and transference for continuous one-parameter semigroups
  of positive contractions on {${L}^p$}-spaces.
\newblock {\em Ann. Univ. Sarav. Ser. Math.}, 9(1):iv+97, 1998.

\bibitem{Fleming1976}
R.~J. Fleming, J.~A. Goldstein, and J.~E. Jamison.
\newblock One parameter groups of isometries on certain {B}anach spaces.
\newblock {\em Pacific J. Math.}, 64(1):145--151, 1976.

\bibitem{Gerlach2013}
M.~Gerlach.
\newblock On the peripheral point spectrum and the asymptotic behavior of
  irreducible semigroups of {H}arris operators.
\newblock {\em Positivity}, 17(3):875--898, 2013.

\bibitem{Goldstein1973}
J.~A. Goldstein.
\newblock Groups of isometries on {O}rlicz spaces.
\newblock {\em Pacific J. Math.}, 48:387--393, 1973.

\bibitem{Heinrich1980}
S.~Heinrich.
\newblock Ultraproducts in {B}anach space theory.
\newblock {\em J. Reine Angew. Math.}, 313:72--104, 1980.

\bibitem{Johnson2001}
W.~B. Johnson and J.~Lindenstrauss.
\newblock Basic concepts in the geometry of {B}anach spaces.
\newblock In {\em Handbook of the geometry of {B}anach spaces, {V}ol. {I}},
  pages 1--84. North-Holland, Amsterdam, 2001.

\bibitem{Keicher2006}
V.~Keicher.
\newblock On the peripheral spectrum of bounded positive semigroups on atomic
  {B}anach lattices.
\newblock {\em Arch. Math. (Basel)}, 87(4):359--367, 2006.

\bibitem{Konig2004}
H.~K{\"o}nig.
\newblock Applications of spherical designs to {B}anach space theory.
\newblock In {\em Orlicz centenary volume}, volume~64 of {\em Banach Center
  Publ.}, pages 127--134. Polish Acad. Sci., Warsaw, 2004.

\bibitem{Krasnoselskii1968}
M.~A. Krasnosel'ski{\u\i}.
\newblock A certain spectral property of linear completely continuous operators
  in the space of continuous functions.
\newblock {\em Problemy Mat. Anal. Slo\v z. Sistem}, (Vyp. 2):68--71, 1968.
\newblock (Russian).

\bibitem{Krengel1985}
U.~Krengel.
\newblock {\em Ergodic theorems}, volume~6 of {\em de Gruyter Studies in
  Mathematics}.
\newblock Walter de Gruyter \& Co., Berlin, 1985.
\newblock With a supplement by Antoine Brunel.

\bibitem{Lemmens2007}
B.~Lemmens, B.~Randrianantoanina, and O.~van Gaans.
\newblock Second derivatives of norms and contractive complementation in
  vector-valued spaces.
\newblock {\em Studia Math.}, 179(2):149--166, 2007.

\bibitem{Lemmens2003}
B.~Lemmens and O.~Van~Gaans.
\newblock Iteration of linear {$p$}-norm nonexpansive maps.
\newblock {\em Linear Algebra Appl.}, 371:265--276, 2003.

\bibitem{Lemmens2009}
B.~Lemmens and O.~van Gaans.
\newblock Dynamics of non-expansive maps on strictly convex {B}anach spaces.
\newblock {\em Israel J. Math.}, 171:425--442, 2009.

\bibitem{Lyubich1997}
Yu. Lyubich.
\newblock The boundary spectrum of linear operators in finite-dimensional
  spaces.
\newblock In {\em Linear operators ({W}arsaw, 1994)}, volume~38 of {\em Banach
  Center Publ.}, pages 217--226. Polish Acad. Sci., Warsaw, 1997.

\bibitem{Lyubich1970}
Yu.~I. Lyubich.
\newblock On the boundary spectrum of contractions in minkowski spaces.
\newblock {\em Siberian Mathematical Journal}, 11(2):271--279, 1970.

\bibitem{Lyubich1993}
Yu.~I. Lyubich and L.~N. Vaserstein.
\newblock Isometric embeddings between classical {B}anach spaces, cubature
  formulas, and spherical designs.
\newblock {\em Geom. Dedicata}, 47(3):327--362, 1993.

\bibitem{Markus1971}
A.~S. Markus, L.~N. Nikol'skaja, and N.~K. Nikol'ski{\u\i}.
\newblock The unitary spectrum of a contraction in a {B}anach space.
\newblock {\em Zap. Nau\v cn. Sem. Leningrad. Otdel. Mat. Inst. Steklov.
  (LOMI)}, 22:65--74, 1971.
\newblock (Russian).

\bibitem{Martinez1996}
J.~Martinez and J.~M. Mazon.
\newblock {$C_0$}-semigroups norm continuous at infinity.
\newblock {\em Semigroup Forum}, 52(2):213--224, 1996.

\bibitem{Meyer-Nieberg1991}
P.~Meyer-Nieberg.
\newblock {\em Banach lattices}.
\newblock Universitext. Springer-Verlag, Berlin, 1991.

\bibitem{Milman1988}
V.~D. Milman.
\newblock A few observations on the connections between local theory and some
  other fields.
\newblock In {\em Geometric aspects of functional analysis (1986/87)}, volume
  1317 of {\em Lecture Notes in Math.}, pages 283--289. Springer, Berlin, 1988.

\bibitem{Munoz1999}
G.~A. Mu{\~n}oz, Y.~Sarantopoulos, and A.~Tonge.
\newblock Complexifications of real {B}anach spaces, polynomials and
  multilinear maps.
\newblock {\em Studia Math.}, 134(1):1--33, 1999.

\bibitem{Randrianantoanina2004}
B.~Randrianantoanina.
\newblock Contractive projections in {O}rlicz sequence spaces.
\newblock {\em Abstr. Appl. Anal.}, (2):133--145, 2004.

\bibitem{Schaefer1974}
H.~H. Schaefer.
\newblock {\em Banach lattices and positive operators}.
\newblock Springer-Verlag, New York, 1974.
\newblock Die Grundlehren der mathematischen Wissenschaften, Band 215.

\bibitem{Sine1990}
R.~Sine.
\newblock A nonlinear {P}erron-{F}robenius theorem.
\newblock {\em Proc. Amer. Math. Soc.}, 109(2):331--336, 1990.

\bibitem{Tzafriri1969}
L.~Tzafriri.
\newblock Remarks on contractive projections in {$L_{p}$}-spaces.
\newblock {\em Israel J. Math.}, 7:9--15, 1969.

\bibitem{Wolff2008}
M.~P.~H. Wolff.
\newblock Triviality of the peripheral point spectrum of positive semigroups on
  atomic {B}anach lattices.
\newblock {\em Positivity}, 12(1):185--192, 2008.

\bibitem{Yosida1995}
K.~Yosida.
\newblock {\em Functional analysis}.
\newblock Classics in Mathematics. Springer-Verlag, Berlin, 1995.
\newblock Reprint of the sixth (1980) edition.

\end{thebibliography}

\end{document}